\documentclass[11pt]{article}
\usepackage[T1]{fontenc}

\usepackage[a4paper,margin=1in]{geometry}
\usepackage{indentfirst}

\usepackage{amsmath,amsthm,amsfonts,amssymb,bm,wasysym}
\usepackage[usenames]{color}
\usepackage{hyperref}
\usepackage[shortlabels]{enumitem}
\usepackage{graphicx}
\usepackage[singlelinecheck=false]{caption}
\usepackage{tikz-cd}
\usepackage{faktor}

\numberwithin{equation}{section}

\theoremstyle{definition}

\newtheorem{ateo}{Theorem}

\newtheorem{lema}{Lemma}[section]
\newtheorem{prop}{Proposition}[section]
\newtheorem{deft}{Definition}[section]
\newtheorem{cor}{Corollary}[section]

\newtheorem{rmk}{Remark}[section]

\DeclareMathOperator\PSL{PSL}
\DeclareMathOperator\Per{Per}
\DeclareMathOperator\Cov{Cov}

\newcommand{\C}{\mathbb{C}}
\newcommand{\D}{\mathbb{D}}

\newcommand{\R}{\mathbb{R}}
\newcommand{\s}{\mathbb{S}}
\newcommand{\Z}{\mathbb{Z}}

\allowdisplaybreaks

\title{Matings between compositions of rational maps and free products of finite cyclic groups}
\author{Shaun Bullett, Luna Lomonaco, Miguel Ratis Laude}
\date{2026}

\begin{document}

\maketitle

\begin{abstract}
    Given a pair of rational maps $(f, g)$, of degrees $p$ and $q$, each with a parabolic fixed point having a fully invariant simply-connected basin of attraction, we construct an algebraic correspondence $F$ on the Riemann sphere, of bidegree $(pq, pq)$, realizing a mating between the two compositions $g\circ f$ and $f\circ g$ of the maps, and the parabolic faithful discrete representation of the free product of cyclic groups of orders $p+1$ and $q+1$. We also show that $F$ is the composition of a pair of deleted covering correspondences of rational maps which are conjugated to polynomials of degrees $p+1$ and $q+1$. We generalize our method to construct matings between compositions of pairs of polynomials and (non-parabolic) faithful Kleinian representations of the same group, now with connected regular set. As far as we are aware, these matings between pairs of maps and groups are the first examples that are not time-reversible (that is, they are not conjugate to their own inverses).
\end{abstract}

\section{Introduction}\label{sec:intro}

There are two classical ways of combining conformal dynamical systems. On the group side, Klein's combination theorem \cite{maskit-1965} builds a single Kleinian group from two groups acting on complementary fundamental domains, and Bers' simultaneous uniformization \cite{bers-1960} welds two conformal structures into one quasiFuchsian group; on the map side, Douady-Hubbard matings \cite{douady-1983} glue two filled Julia sets into a single rational map. A \textit{mating between a rational map and a Kleinian group} is the hybrid operation: one dynamical system behaving, on complementary invariant subsets of the Riemann sphere, as a map on a filled Julia set and as a group on its ordinary set. No single-valued map can do both jobs at once, but an \textit{algebraic correspondence} --- a multivalued map $z \mapsto w$ defined by a polynomial relation $P(z,w)=0$ --- can, as Fatou already suspected a century ago \cite{fatou-1929}. The first such matings were constructed by the first author and C.~Penrose \cite{bullett-penrose-1994}, for the modular group and quadratic polynomials, and by now there is a substantial theory (see Subsection~\ref{subsec:background}).

Matings must negotiate an asymmetry between their two ingredients: the arrow of time. A group is time-symmetric --- every generator comes with its inverse --- while a rational map of degree at least two is irreversible. A correspondence $F$ always has an inverse correspondence $F^{-1}$. In every mating constructed so far, three features are common: each mates a \textit{single} map with a group; the `limit set' consists of two copies of \textit{one and the same} filled Julia set; and the correspondence is \textit{time-reversible}, that is, $F^{-1}$ is conjugate to $F$. This is no accident. As observed in \cite[\S 5.3]{lomonaco-mukherjee-2026}, most correspondences realizing matings can be written in the form $J \circ \mathrm{Cov}_0^Q$, where
\[
\mathrm{Cov}_0^Q \colon\quad z \mapsto w \iff \frac{Q(w)-Q(z)}{w-z}=0
\]
is the \textit{deleted covering correspondence} of a holomorphic map $Q$ onto a quotient surface and $J$ is an involution of the surface; and a correspondence of this form is automatically conjugate to its own inverse bby the involution $J$.

In this paper we construct matings that break with all three features at once. We mate a \textit{pair} $(f,g)$ of rational maps, of degrees $p$ and $q$, with faithful discrete representations of the free product $C_{p+1} * C_{q+1}$ of finite cyclic groups ($p,q \geq 2$); the limit set of the resulting bidegree $(pq,pq)$ correspondence consists of copies of the filled Julia sets of the \textit{two} compositions $g \circ f$ and $f \circ g$, which are in general genuinely different (see Remark~\ref{rmk.distinct_julia_sets}); and the correspondences we obtain are of the more general form $\mathrm{Cov}_0^P \circ \mathrm{Cov}_0^Q$, with a commutator involved when time is reversed. In particular, they need not be time-reversible: we show that the mating cannot be conjugate to its own inverse unless the two compositions are conformally conjugate (Proposition~\ref{main.time-reversibility-condition}), and we exhibit explicit non-reversible examples. The failure of such reversibility reflects the non-commutativity of the pair: the correspondence carries copies of the filled Julia sets of both compositions, $K(g\circ f)$ and $K(f\circ g)$, on a single sphere --- with branches of $F$ and $F^{-1}$ acting on them as $g\circ f$ and $f\circ g$ respectively --- and it loses the symmetry under time revarsal as soon as the two compositions differ as conformal conjugacy classes.

\subsection{Statement of results}\label{subsec:results}

Our first results concern the parabolic side of the story. In Section~\ref{sec.preliminaries} we introduce the class $\mathcal{P}_d$ of degree $d$ rational maps for which $\infty$ is a fixed point of multiplier $1$ with a unique attracting direction, and the subclass $\mathrm{Per}_1^d(1)$ of maps possessing a fully invariant connected immediate basin of attraction; the complement of that basin is a completely invariant full compact set, the \textit{filled Julia set} $K(\cdot)$. On the group side, $\Gamma_{p,q}$ denotes the representation of $C_{p+1} * C_{q+1}$ in $PSL(2,\R)$ for which the composition of the generators of orders $p+1$ and $q+1$ is parabolic: it is the orientation-preserving subgroup of the $(p+1,q+1,\infty)$-triangle group, and it is the unique faithful discrete representation of $C_{p+1} * C_{q+1}$ with limit set $\mathbb{R} \cup \{\infty\}$. For $q=1$ these are the Hecke groups $H_{p+1}$, and $(p,q)=(2,1)$ is the modular group; our matings, restricted to these cases, recover the notions of \cite{bullett-lomonaco-2019,bullett-lomonaco-2022,bullett-lomonaco-2024,bullett-lomonaco-lyubich-mukherjee-2024} (Remark~\ref{rmk.hecke}).

\begin{ateo}\label{main.parabolic_mating}
Let $p,q \geq 2$ be integers, $f \in \mathcal{P}_p$ and $g \in \mathcal{P}_q$ be parabolic maps such that $g \circ f \in \mathrm{Per}_1^{pq}(1)$ and $K(g \circ f)$ is connected. Then there exists a holomorphic correspondence $F$ on $\widehat{\C}$ that realizes the \textit{mating} between the pair $(f,g)$ and the group $\Gamma_{p,q}$.
\end{ateo}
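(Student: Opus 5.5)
The strategy is to follow the Bullett--Lomonaco approach for Hecke groups. That approach builds the correspondence by conformally gluing a \emph{pinched} model of the maps to a model of the group, and then recognizing the result as algebraic.

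\textbf{Step 1: a group-side model.} Write $\Gamma_{p,q}=\langle \sigma,\tau\rangle$ with $\sigma^{p+1}=\tau^{q+1}=1$ and $\sigma\tau$ parabolic. We replace it by a map-like object. On the upper half plane take a fundamental domain of the $(p+1,q+1,\infty)$ triangle group. Cut $\widehat{\C}$ into $p+1$ sectors permuted by $\sigma$, and into $q+1$ sectors permuted by $\tau$. The deleted covering correspondences of $z\mapsto z^{p+1}$ and $z\mapsto z^{q+1}$ (conjugated to the right fixed points) reproduce $\sigma$ and $\tau$ as the branches $\mathrm{Cov}_0$, minus the identity. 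On the lower half plane the composite $\mathrm{Cov}_0^{P}\circ\mathrm{Cov}_0^{Q}$ then acts by the $pq$ nontrivial words $\sigma^i\tau^j$ on a fundamental region. We then conformally conjugate this to a Blaschke-type or parabolic model $h_{p}\colon\D\to\D$ of degree $p$, and $h_q$ of degree $q$, each with a parabolic point on $\partial\D$. Concretely, the boundary action of the group's ``Minkowski-type'' map $\mathbb{S}^1\to\mathbb{S}^1$ is the piecewise Möbius map equal to $\sigma^{-1}$ or $\tau^{-1}$ on arcs.

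\textbf{Step 2: conformal welding.} For $f\in\mathcal P_p$ and $g\in\mathcal P_q$ with $g\circ f\in\mathrm{Per}_1^{pq}(1)$ and $K(g\circ f)$ connected, the parabolic basins $B(g\circ f)$ and $B(f\circ g)$ are simply connected, and $f\colon B(g\circ f)\to B(f\circ g)$ and $g\colon B(f\circ g)\to B(g\circ f)$ are proper maps of degrees $p$ and $q$. Uniformize both basins by $\D$. By the parabolic Riemann mapping normalization (as in the proof that $\mathrm{Per}_1(1)$ maps have parabolic Blaschke models), $f$ and $g$ become parabolic Blaschke products $B_p$ and $B_q$, unique up to the parabolic normalization. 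Next, show that the boundary circle homeomorphism problem is solvable. We need a quasisymmetric (or David) conjugacy between the pair $(B_p,B_q)$ and the piecewise Möbius pair $(A_p,A_q)$ coming from $\Gamma_{p,q}$; the two pairs share combinatorics and parabolic fixed points with matching petal structure. We would construct this conjugacy by pulling back along the common Markov partition, and prove quasisymmetry, or at least the David property, at the parabolic points via local Fatou coordinate estimates. Then glue: put $K(g\circ f)$ and $K(f\circ g)$ as filled sets, and replace their basins by the group's half-plane fundamental regions via the conjugacy. This gives a topological sphere with a conformal structure; measurable Riemann mapping (or David integrability) uniformizes it to $\widehat{\C}$.

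\textbf{Step 3: algebraicity and the mating property.} On the uniformized sphere define $F$ branchwise. On the copy of $K(g\circ f)$ one branch is $g\circ f$. On the complement the branches are the group elements $\sigma^i\tau^j$ transported. Show that $F$ extends holomorphically across the welding curve, since the Möbius boundary actions and the maps agree there. The graph is then a closed analytic subvariety of $\widehat{\C}\times\widehat{\C}$ with $pq$ branches each way, so it is algebraic of bidegree $(pq,pq)$ by Chow/Liouville. Finally, check the definition of mating: the limit set is the union of the two filled-set copies, $F$ acts as $g\circ f$ on one and $F^{-1}$ as $f\circ g$ on the other, and on the regular set the action is conformally conjugate to $\Gamma_{p,q}$.

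\textbf{Main obstacle.} The hard step is Step 2's boundary conjugacy at parabolic points. The group's boundary map has parabolic points that are not quasisymmetrically equivalent to those of parabolic Blaschke products in general. I would first try to adapt the Bullett--Lomonaco argument for the Hecke case, producing a quasiconformal (not merely David) conjugacy by matching parabolic petals. Here there are two alternating maps, so the Markov partition must interleave arcs of both circles. I would handle this by working with the degree-$pq$ composite on a single circle and then recovering $f$ and $g$ separately as factors.
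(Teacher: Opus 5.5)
Your overall route is a reasonable alternative in principle: uniformize the two basins, conjugate the external pair $(B_p,B_q)$ on the circle to the group's boundary pair, then do surgery and straighten. But Step~3 as written does not produce a closed $pq\!:\!pq$ correspondence. Write $\rho,\sigma$ for the generators of orders $p+1,q+1$ (your $\sigma,\tau$). You put only the branch $g\circ f$ on the copy of $K(g\circ f)$ and group elements elsewhere. Now take $z$ in the group region close to the arc glued to $J(g\circ f)$, say beyond $\rho^{-m}\sigma^{-n}\gamma$.
\begin{itemize}
\item Of the $pq$ elements $\sigma^{n'}\rho^{m'}$, only $(n',m')=(n,m)$ keeps $z$ near that arc.
\item The $q-1$ choices with $m'=m$, $n'\neq n$, and the $(p-1)q$ choices with $m'\neq m$, all send $z$ near the arc glued to $J(f\circ g)$.
\end{itemize}
So the graph must also contain $pq-1$ branches carrying $\Lambda_-$ into $\Lambda_+$. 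These must be holomorphic on all of $K(g\circ f)$, including its interior, and must match the group elements across the seam. In the paper they are $\Cov_0^g\circ f$ and $g^{-1}\circ\Cov_0^f$, plus transitional pieces on $\hat U\setminus\overline{U'}$ and $\rho^m(U)$, giving $1+(q-1)+(p-1)q=pq$ branches. Without them your graph is not closed, so Chow's theorem cannot be invoked. This mixing of $\Lambda_-$ and $\Lambda_+$ is exactly what is new compared with the single-map case.

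These cross branches glue continuously only if the boundary identifications intertwine $f$ with the powers $\rho^m$, and $g$ with the powers $\sigma^n$, \emph{separately}, through intermediate curves. That is the content of the diagrams \eqref{eq.commutative_diagrams}, realized in Lemma~\ref{lema.diagram_lift} via $\partial\hat U$ and $\partial\hat V$. Your suggested fix, conjugating the degree-$pq$ composite on one circle and then ``recovering $f$ and $g$ as factors'', does not achieve this. A conjugacy $h_1$ between $B_q\circ B_p$ and the group composite need not send fibres of $B_p$ to fibres of the $\rho$-map, so the induced $h_2$ need not be well defined. You need to build $h_1,h_2$ together by an interleaved pull-back, with $h_2\circ B_p=A_p\circ h_1$ and $h_1\circ B_q=A_q\circ h_2$. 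Here $A_p$ and $A_q$ act on arcs by the powers $\rho^m$ and $\sigma^n$, not by single inverse generators. Similarly, the conformal conjugacy in Step~1 between the group action and a Blaschke model does not exist; only the boundary maps can be compared.

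By contrast, the obstacle you single out is milder than you fear. After identifying $\pm i$, both sides of the group's cusp carry parabolic M\"obius germs, and $B_q\circ B_p$ has a double parabolic boundary point. So one expects a power-law conjugacy with the same exponent on both sides, which is quasisymmetric, as in the Hecke case.

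The paper avoids circle conjugacies altogether. It keeps $f$ and $g$ on pinched polynomial-like domains with corners of angle $\theta>0$, and bends $I$ into curves $\gamma,\eta$ meeting the cusps at the same angle. It gets the quasiconformal interpolation at the cusps from Pommerenke's corner asymptotics and Warschawski's strip estimates (Lemma~\ref{lema.warschawski}). It then spreads $\mu_0$ by the dynamics and straightens.
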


Here \textit{mating} (Definition~\ref{deft.parabolic_mating}) means: the sphere splits into fully $F$-invariant disjoint sets $\Lambda \cup \Omega$, where $\Lambda = \Lambda_- \cup \Lambda_+$ with $\Lambda_- \cap \Lambda_+$ a single point; the $pq:1$ restriction $F|_{\Lambda_-}$ admits a pinched polynomial-like restriction hybrid conjugate to $g \circ f$, and $F^{-1}|_{\Lambda_+}$ admits one hybrid conjugate to $f \circ g$; and the action of $F$ on the open set $\Omega$ is conformally conjugate to the action of the elements $\sigma^n \rho^m$ of $\Gamma_{p,q}$ on the unit disk. Figure~\ref{fig:mating-asymmetric} shows the mating between a pair of quadratic maps and the group $\Gamma_{2,2} \cong C_3 * C_3$.

\begin{figure}
    \centering
    \includegraphics[width=0.7\linewidth]{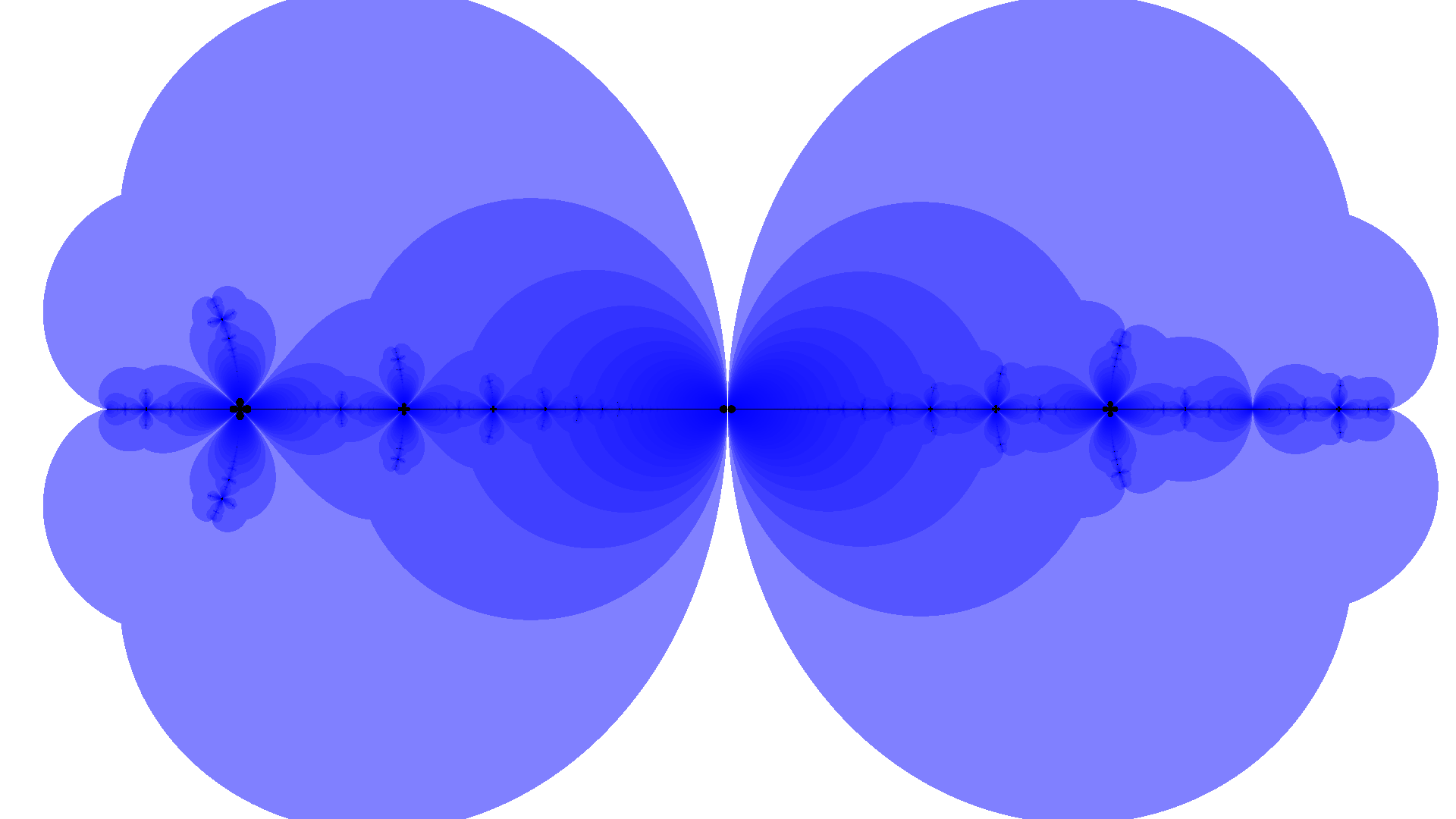}
    \caption{A mating between the a pair of quadratic maps and the group $\Gamma_{2,2}$.}
    \label{fig:mating-asymmetric}
\end{figure}

Our second result is a rigidity statement: it identifies exactly which correspondences can realize such matings.

\begin{ateo}\label{main.parabolic_cov}
Let $p,q \geq 2$ be integers, $f \in \mathcal{P}_p$, $g \in \mathcal{P}_q$ with $g \circ f \in \mathrm{Per}_1^{pq}(1)$. If $F$ is a correspondence realizing the mating between the pair $(f,g)$ and the group $\Gamma_{p,q}$, then there are rational maps $P,Q \colon \widehat{\C} \to \widehat{\C}$, of degrees $p+1$ and $q+1$ respectively, both
conjugate to polynomials, such that $F = \mathrm{Cov}_0^Q \circ \mathrm{Cov}_0^P$.
\end{ateo}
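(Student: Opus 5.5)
The plan is to recover the two rational maps $P$ and $Q$ from the group structure that $F$ exhibits on $\Omega$. Then we show that the "deleted covering" description, which is forced on $\Omega$, extends to the whole sphere by algebraicity.

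Write $\Gamma_{p,q}=\langle\sigma,\rho\rangle$ with $\sigma^{p+1}=\rho^{q+1}=1$, and let $\phi\colon\Omega\to\D$ be the conformal conjugacy from Definition~\ref{deft.parabolic_mating}. The branches of $F$ on $\Omega$ are conjugate to the elements $\sigma^n\rho^m$ with $1\le n\le p$ and $1\le m\le q$; this is $pq$ branches, matching the bidegree. Pull back the two cyclic subgroups $\langle\sigma\rangle$ and $\langle\rho\rangle$ via $\phi$. This gives finite-order conformal automorphisms $\tilde\sigma,\tilde\rho$ of $\Omega$, and $F|_\Omega$ becomes the composition of the multivalued maps $z\mapsto\{\tilde\rho^m z\}_{m\ne0}$ and $z\mapsto\{\tilde\sigma^n z\}_{n\ne0}$, in the appropriate order.

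Next define $Q_\Omega\colon\Omega\to\Omega/\langle\tilde\rho\rangle$ and $P_\Omega\colon\Omega\to\Omega/\langle\tilde\sigma\rangle$. These quotient maps are branched covers of degrees $q+1$ and $p+1$, each with a single totally ramified point (the fixed point of the elliptic generator). The quotients $\D/\langle\rho\rangle$ and $\D/\langle\sigma\rangle$ are disks. On $\Omega$ the orbit relation of $\langle\tilde\rho\rangle$ minus the diagonal is exactly $\mathrm{Cov}_0^{Q_\Omega}$, and similarly for $P_\Omega$. Hence $F|_\Omega=\mathrm{Cov}_0^{Q_\Omega}\circ\mathrm{Cov}_0^{P_\Omega}$.

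The main step, and the expected obstacle, is to extend $P_\Omega$ and $Q_\Omega$ to rational maps of $\widehat{\C}$. The approach is as follows. The intermediate "half" correspondences $z\mapsto\tilde\sigma^n z$ should be shown to be restrictions of algebraic correspondences. Concretely, $F$ has bidegree $(pq,pq)$, and its graph near $\Omega$ decomposes into the compositions above. We factor the algebraic curve of $F$ via the intermediate images $w=\tilde\sigma^n z$. On $\Omega$ this uses the fact that $\Lambda_\pm$ are full compact sets meeting at a single point. Consequently the complement of each of them is a disk on which the maps are defined, and the pinched polynomial-like structure lets us extend the local symmetries $\tilde\sigma$ and $\tilde\rho$ across the limit set. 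The quickest route is a removability and conformal-welding argument. The quotient map $P_\Omega$ is defined on $\Omega$, and its action near $\Lambda_+$ is determined by branches of $F^{-1}$, which are conjugate to $f\circ g$ and hence holomorphic there. The map $P$ should therefore extend holomorphically across $\Lambda_+$, with the analogous statement for $Q$ across $\Lambda_-$. Near the remaining piece of $\Lambda$, the quotient map should extend as a branched cover. To establish this we use the total invariance of $\Lambda_\mp$ together with the fact that the $\langle\tilde\sigma\rangle$-orbit structure is preserved by $F$. The result is a proper holomorphic map $\widehat{\C}\to\widehat{\C}$ of degree $p+1$, i.e.\ a rational map $P$, and similarly a rational map $Q$ of degree $q+1$. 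I am least sure of this extension step, and I would check that the parabolic point $\Lambda_-\cap\Lambda_+$ causes no essential singularity there, using its parabolic, hence bounded-geometry, structure.

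Finally, since $\mathrm{Cov}_0^Q\circ\mathrm{Cov}_0^P$ and $F$ are algebraic correspondences that agree on the open set $\Omega$, they are equal by the identity principle. Each of $P$ and $Q$ has a totally ramified point: the point $\Lambda_-\cap\Lambda_+$ or the full preimage of a disk, coming from the elliptic fixed points. A rational map of degree $d$ with a totally ramified point is conjugate to a polynomial, obtained by sending the fully ramified critical value and its preimage to $\infty$. This proves that $F=\mathrm{Cov}_0^Q\circ\mathrm{Cov}_0^P$ with $P,Q$ conjugate to polynomials of degrees $p+1$ and $q+1$.
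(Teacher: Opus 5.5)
\textbf{Genuine gap: the extension of $P_\Omega$, $Q_\Omega$ to the limit set.} Your skeleton matches the paper's. On $\Omega$, $P$ and $Q$ are the quotient maps by the two elliptic generators, and once they are known to be rational, $F=\Cov_0^Q\circ\Cov_0^P$ follows from agreement on $\Omega$. But the step you flag as uncertain is the whole content of the theorem, and the mechanism you sketch for it cannot work.

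What extends to the sphere is the quotient map, not the symmetries $\tilde\rho,\tilde\sigma$. The extension is also not a removability or welding phenomenon. The fundamental tile $\phi^{-1}(\Delta_\rho)$ borders $\Lambda_+$, while the $p$ tiles $\phi^{-1}(\rho^m\Delta_\rho)$, $1\le m\le p$, border $\Lambda_-$. Every point of $\Omega/\langle\tilde\rho\rangle$ already has all $p+1$ preimages in $\Omega$. So any extension must send $\Lambda$ into the complement of that disk, injectively on $\Lambda_+$ and as a degree-$p$ branched cover from $\Lambda_-$ onto a copy of $\Lambda_+$.

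Since $\Lambda_\pm$ are filled Julia sets that generally have interior (already for $f=g=z^2/(z+1/2)$, where $0$ is superattracting), this map must be supplied on $\Lambda_-$; it cannot be obtained as a limit of $P_\Omega$. Your appeals to $F^{-1}$ being conjugate to $f\circ g$, and to the orbit structure being preserved by $F$, do not produce it. Moreover, $\Lambda_-$ is not totally invariant; only $\Lambda$ is. Since $F|_{\Lambda_-}$ is $pq:1$ onto $\Lambda_-$, the other $pq-1$ images of a point of $\Lambda_-$ lie in $\Lambda_+$. You also never construct the target sphere: $\Omega/\langle\tilde\rho\rangle$ is only a disk.

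\textbf{The missing idea.} Build the extension explicitly from $f$, $g$ and the hybrid conjugacies $\phi_-\colon\Lambda_-\to K(g\circ f)$ and $\phi_+\colon\Lambda_+\to K(f\circ g)$ given by the definition of mating. The paper puts $\Lambda_+$ into the fundamental tile, $T=\phi^{-1}(\Delta_\rho)\cup\Lambda_+$, and glues the two edges of $T$ by the induced action of $\rho$ to obtain the target sphere $\Sigma_1$. It then sets
\begin{itemize}
\item $P=\mathrm{id}$ on $T$;
\item $P=\phi^{-1}\circ\rho^m\circ\phi$ on $\phi^{-1}(\rho^{-m}\Delta_\rho)$;
\item $P=\phi_+^{-1}\circ f\circ\phi_-$ on $\Lambda_-$.
\end{itemize}
Since $\phi$ is conformal and $\phi_\pm$ are hybrid, $P$ is quasiregular with zero dilatation, hence rational of degree $p+1$. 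The paper treats the continuity of this gluing along $\partial\Lambda_-$ as clear. $Q$ is built the same way from $S=\phi^{-1}(\Delta_\sigma)\cup\Lambda_-$, with $Q=\phi_-^{-1}\circ g\circ\phi_+$ on $\Lambda_+$.

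\textbf{Two smaller corrections.}
\begin{enumerate}
\item The totally ramified point of $P$ is $\phi^{-1}$ of the fixed point of $\rho$. By contrast, $\Lambda_-\cap\Lambda_+$ is only a simple critical point of $P$: its other preimages are the $p-1$ poles of $f$, transported into $\Lambda_-$.
\item With your labelling ($\sigma$ of order $p+1$, branches $\sigma^n\rho^m$), you actually get $F|_\Omega=\Cov_0^{P_\Omega}\circ\Cov_0^{Q_\Omega}$. The degree-$(p+1)$ generator must therefore be $\rho$, the one applied first, as in Definition~\ref{deft.parabolic_mating}.
\end{enumerate}
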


Our third and fourth results replace the parabolic representation $\Gamma_{p,q}$ by an arbitrary faithful discrete representation $\Gamma$ of $C_{p+1} * C_{q+1}$ in $PSL(2,\C)$ with totally disconnected limit set (all such representations form a single quasiconformal conjugacy class), and the parabolic maps by polynomials; the notion of mating is correspondingly weakened to \textit{orbit mating} (Definition~\ref{deft.hyperbolic_mating}), asking on the group side only for grand orbit equivalence, as in \cite{bullett-harvey-2000, ratis-laude-2025}.

\begin{ateo}\label{main.hyperbolic_mating}
Let $p,q \geq 2$ be integers, $f,g$ polynomials of degrees $p$ and $q$ respectively with $K(g \circ f)$ connected, and $\Gamma$ a faithful discrete representation of $C_{p+1} * C_{q+1}$ in $PSL(2,\C)$ with totally disconnected limit set. Then there exists a holomorphic correspondence $F$ on $\widehat{\C}$ realizing the orbit mating between the pair $(f,g)$ and the group $\Gamma$.
\end{ateo}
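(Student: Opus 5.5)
We build $F$ as a composition of two deleted covering correspondences, $F=\mathrm{Cov}_0^Q\circ\mathrm{Cov}_0^P$, mirroring Theorem~\ref{main.parabolic_cov}. The polynomials $P$ and $Q$ of degrees $p+1$ and $q+1$ are obtained by surgery so that the actions $z\mapsto\{$other $P$-preimages of $P(z)\}$ and $z\mapsto\{$other $Q$-preimages of $Q(z)\}$ realize, on one part of the sphere, the cyclic generators $\sigma$ and $\rho$ of orders $p+1$ and $q+1$, and on the other part realize $f$ and $g$.

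\textbf{Group side.} Since all faithful discrete representations with totally disconnected limit set are quasiconformally conjugate, we fix a Klein combination model for $\Gamma$. Take an elliptic $\sigma$ of order $p+1$ and an elliptic $\rho$ of order $q+1$, with fundamental domains whose complements are disjoint round disks $D_\sigma$ and $D_\rho$; the ping-pong lemma gives faithfulness and a Cantor limit set. The quotients $\widehat{\C}/\langle\sigma\rangle$ and $\widehat{\C}/\langle\rho\rangle$ are spheres, and the projections are degree $p+1$ and $q+1$ branched covers. Then $\Omega(\Gamma)/\Gamma$ is a sphere minus one disk, and the relevant data is the action of $\sigma$ off $D_\sigma$ and of $\rho$ off $D_\rho$.

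\textbf{Map side.} Use the polynomial-like maps $f\colon U\to V$ and $g\colon V'\to W$, arranged so that $g\circ f$ has a polynomial-like restriction with connected filled set $K(g\circ f)$, and $f\circ g$ has one with $K(f\circ g)=f(K(g\circ f))$. Construct a topological degree $p+1$ branched cover $\tilde P$ of the sphere. Inside a disk $\Delta_1$ it acts as $f$, mapping it $p$-to-one onto a disk $\Delta_2$. Outside, one extra sheet is glued in via the $\sigma$-model on the annular region, so that the deleted covering of $\tilde P$ sends the outside disk to $p$ images permuted by $\sigma$. Build $\tilde Q$ similarly with $g$ and $\rho$, with the roles of $\Delta_1$ and $\Delta_2$ swapped. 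Pull back the standard conformal structure, extended by the group-invariant structure on $\Omega$, the standard structure on the filled sets, and interpolation on annuli. Its invariance under both $\tilde P$ and $\tilde Q$ by construction lets measurable Riemann mapping make both maps holomorphic, hence rational of degrees $p+1$ and $q+1$. A single fixed point of total ramification at infinity makes each conjugate to a polynomial.

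\textbf{Verification, and the main obstacle.} The main obstacle is that $\tilde P$ and $\tilde Q$ must be rational simultaneously on the \emph{same} sphere. Their separate gluings must be compatible, meaning the domain of the $f$-part of $\tilde P$ and the range of the $g$-part of $\tilde Q$ must match the group fundamental domains. To achieve this, first build a topological model of the whole correspondence. On the complement of $\Lambda_\pm$, take the quotient of the ordinary set. On the limit side, glue the two external-class annuli of $g\circ f$ and $f\circ g$, using $f$ and $g$ to identify fundamental annuli with the $\sigma,\rho$ domains. Then straighten this single combined Beltrami differential once, which respects both maps at the same time.

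Given this, we check orbit mating (Definition~\ref{deft.hyperbolic_mating}):
\begin{enumerate}[(i)]
\item $\Lambda_-=K(g\circ f)$ is forward invariant, with branch $Q$-deleted$\circ P$-deleted equal to $g\circ f$ up to hybrid conjugacy, and $\Lambda_+$ is handled by $F^{-1}$ acting as $f\circ g$.
\item On $\Omega$, the grand orbits of $F$ coincide with those of $\Gamma$ under the conjugacy, since each branch of $F$ there is $\rho^m\sigma^n$.
\end{enumerate}
Connectivity of $K(g\circ f)$ is used so that the complement is a disk carrying the group action.
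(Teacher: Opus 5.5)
\textbf{Verdict: there is a genuine gap.} Your overall architecture matches the paper's (proof of Theorem~\ref{main.hyperbolic_mating}, then Theorem~\ref{main.hyperbolic_cov}): build a topological model of the whole correspondence, spread an invariant Beltrami form by the dynamics, straighten once, and only afterwards extract $P$ and $Q$. But the step that distinguishes the hyperbolic case from the parabolic one is missing, and your verification relies on a statement that is false here.

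\textbf{The conjugacy in your step (ii) does not exist.} Here $\Lambda_-$ and $\Lambda_+$ (copies of $K(g\circ f)$ and $K(f\circ g)$) are disjoint, connected and full, so $\Omega$ is an annulus. It is neither a disk nor homeomorphic to $\Omega(\Gamma)$, which is the complement of a Cantor set. So $F|_{\Omega}$ cannot be conjugate to the action of $\Gamma$. Your item (ii), ``under the conjugacy'', and your last sentence, ``the complement is a disk carrying the group action'', import the parabolic picture. This is exactly why the theorem only claims an orbit mating, i.e.\ $\Omega/F\cong\mathbb{D}/\Gamma$. Two smaller errors in the same direction:
\begin{itemize}
\item $\Omega(\Gamma)/\Gamma$ is a sphere with four cone points, not a sphere minus a disk.
\item A fundamental domain for an elliptic element of order $\geq 3$ has angle $2\pi/(p+1)$ at the fixed points, so its complement cannot be a round disk; you need lunes.
\end{itemize}

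\textbf{The missing idea is a group-side fundamental annulus.} You need an annulus to glue to $U\setminus\overline{U'}$ (and to $V\setminus\overline{V'}$). Its inner boundary must be carried $pq:1$ onto its outer boundary by the elements $\sigma^n\rho^m$. It must also carry intermediate loops on which $\rho^m$ and $\sigma^n$ play the roles of $f$ and $g$. For non-parabolic $\Gamma$, the arcs $\sigma^n\rho^m I$ no longer share endpoints on $\mathbb{S}^1$. So the region bounded by $I$ and these arcs meets the circle in a family $\mathcal{J}$ of intervals and is not an annulus. ``Take the quotient of the ordinary set'' does not repair this.

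\textbf{How the paper builds the annulus.} It adjoins $\chi(z)=1/z$, which satisfies $\chi\sigma\chi=\sigma^{-1}$ and $\chi\rho\chi=\rho^{-1}$. It identifies the two intervals at $\pm i$ with each other by $\chi$. It folds every other $J\in\mathcal{J}$ onto itself by an element $\chi g_J$ that reverses orientation on $\mathbb{S}^1$; this is the lemma on the intervals of $\mathcal{J}$ in Section~\ref{sec.hyperbolic_surgery}. The result is annuli $A$ and $B$. On these, boundary diffeomorphisms can be lifted through the commutative squares and extended quasiconformally (Lemma~\ref{lema.hyperbolic_interpolations}).

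\textbf{The invariance claim is circular.} Without such a device there is nothing to glue. Your claim that the Beltrami form is invariant ``under both $\tilde P$ and $\tilde Q$ by construction'' assumes what must be proved. What is actually needed is invariance under $\Cov_0^{\tilde P}$ and $\Cov_0^{\tilde Q}$, and that is precisely what your unspecified global model would have to guarantee.
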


\begin{ateo}\label{main.hyperbolic_cov}
The orbit matings of Theorem~C likewise factor as $F = \mathrm{Cov}_0^Q \circ \mathrm{Cov}_0^P$ with $P,Q$ of degrees $p+1$ and $q+1$ conjugate to polynomials.
\end{ateo}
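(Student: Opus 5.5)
The factorisation will come out of the construction in Theorem~\ref{main.hyperbolic_mating} itself, as an analogue of the route to Theorem~\ref{main.parabolic_cov}. In the parabolic case, $\mathrm{Cov}_0^P$ and $\mathrm{Cov}_0^Q$ arise as the deleted covering correspondences of the quotient maps for the two elliptic generators $\sigma$ (order $p+1$) and $\rho$ (order $q+1$) of $\Gamma_{p,q}$. I will redo this construction with a general faithful discrete $\Gamma$ whose limit set is totally disconnected.

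First I fix a fundamental configuration for $\Gamma=\langle\sigma,\rho\rangle$. By the quasiconformal-conjugacy statement, a Klein combination picture applies. There are disjoint closed Jordan disks $D_\sigma$, $D_\rho$. The exterior of $D_\sigma$ is a fundamental domain for $\langle\sigma\rangle$ in the sense of a combination, and the exterior of $D_\rho$ plays the same role for $\langle\rho\rangle$. The quotient $\widehat{\C}/\langle\sigma\rangle$ is a sphere. The quotient map $\pi_\sigma$ is a degree $p+1$ rational map with two fully ramified points, the fixed points of $\sigma$, so it is conjugate to $z\mapsto z^{p+1}$.

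Next I glue in the polynomial dynamics. Following the quasiconformal surgery used for Theorem~\ref{main.hyperbolic_mating}, I replace the $\sigma$-orbit structure near one fixed point by the degree $p$ covering coming from $f$, and similarly the $\rho$-side by $g$. The outcome should be a Riemann sphere $\Sigma$ with two holomorphic maps $P,Q:\Sigma\to\widehat{\C}$ of degrees $p+1$ and $q+1$. Each has a fully ramified point, which is the image of $\infty$ for $f$, respectively $g$. A degree $d$ rational map with a point of full local degree $d$ is conjugate to a polynomial after moving that point's image and preimage to $\infty$, so both $P$ and $Q$ are conjugate to polynomials. The correspondence $F$ should be defined as the composition in which a point first moves to a non-trivial $P$-fibre mate and then to a non-trivial $Q$-fibre mate. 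This is exactly $\mathrm{Cov}_0^Q\circ\mathrm{Cov}_0^P$, a relation of bidegree $(p\cdot q, p\cdot q)$.

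I then check that this correspondence coincides with the $F$ produced in Theorem~\ref{main.hyperbolic_mating}. On the regular set, the branches of $\mathrm{Cov}_0^P$ are the local lifts of the nontrivial powers $\sigma^n$, and the branches of $\mathrm{Cov}_0^Q$ are the lifts of $\rho^m$. Their composite realises the grand orbits of the words $\sigma^n\rho^m$, which is precisely orbit equivalence with $\Gamma$. On the limit set, $\mathrm{Cov}_0^P$ restricted to a neighbourhood of $K$ acts as $f$ (degree $p$), and $\mathrm{Cov}_0^Q$ acts as $g$. This gives $g\circ f$ forward and $f\circ g$ under $F^{-1}$.

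The main obstacle is that in the orbit-mating setting the group-side conjugacy is only a grand orbit equivalence. A bare uniqueness argument like the one for Theorem~\ref{main.parabolic_cov}, starting from an arbitrary $F$, is therefore unavailable. For this reason I state the factorisation for the matings constructed in Theorem~\ref{main.hyperbolic_mating}, not for all orbit matings. The key difficulty is to ensure that the surgery produces global holomorphic maps $P$ and $Q$ on the same sphere $\Sigma$, and not merely on two different quotient surfaces. My first attempt is to build both $P$ and $Q$ from a single $\Gamma$-invariant Beltrami differential on $\widehat{\C}$, pulled back through the combined $f,g$ dynamics. Its integrability (bounded dilatation) then gives one uniformising map simultaneously for both quotients.
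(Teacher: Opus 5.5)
There is a genuine gap, and it is exactly where you suspect. Two independent surgeries on $z\mapsto z^{p+1}$ and $z\mapsto z^{q+1}$ give no reason for $P$ and $Q$ to be holomorphic on one and the same source sphere. Your remedy, a single invariant Beltrami differential, is only announced, so $P$ and $Q$ are never actually constructed.

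Even granting them, your comparison with the $F$ of Theorem~C does not establish equality. Showing that $\mathrm{Cov}_0^Q\circ\mathrm{Cov}_0^P$ realises the grand orbits of $\Gamma$ on the regular set and acts like $f$, $g$ near the limit set only shows that it is \emph{another} orbit mating. Grand-orbit equivalence cannot identify two correspondences. What you need is agreement branch by branch over a nonempty open set, after which the identity principle for algebraic curves gives equality.

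Finally, your reason for $P$ being conjugate to a polynomial puts the fully ramified point in the wrong place. It cannot be ``the image of $\infty$ for $f$'': $f$ has local degree $p$ there, and that point does not even belong to the mating sphere, which is glued from bounded neighbourhoods $U$, $V$ of the filled Julia sets. The point of local degree $p+1$ is the image of the fixed point of the order-$(p+1)$ elliptic generator ($\rho_\varepsilon$ in the paper's notation). This is a point of $\Omega$, and it is precisely the fixed point that your own ``replace near one fixed point'' description leaves untouched.

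The missing idea, which is how the paper argues, is that no new surgery is needed: take the source to be the sphere on which the constructed $F$ already lives. Set $T=\phi_-(A_\varepsilon\cap\Delta_{\rho_\varepsilon})$, glue the two edges of $T$ by $\phi_-\circ\rho_\varepsilon\circ\phi_-^{-1}$, and adjoin $V$ to obtain a sphere $\Sigma_1$. Let $P$ be the identity on $T\cup V$, the conjugated powers $\phi_-\circ\rho_\varepsilon^m\circ\phi_-^{-1}$ on the other $p$ tiles, and the map induced by $f$ on the $\Lambda_-$ side. It has degree $p+1$ (one sheet from the identity, $p$ from $f$) and is locally $z\mapsto z^{p+1}$ at the elliptic point, hence conjugate to a polynomial. $Q$ is built symmetrically with target $\Sigma_2$. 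That $\Sigma_1\neq\Sigma_2$ is harmless, because $\mathrm{Cov}_0^P$ depends only on the fibres of $P$ in the source.

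The holomorphy of these pieces is inherited from the straightening map of Theorem~C. This requires its Beltrami form to be compatible with $f$, $g$ and the generators separately, which one arranges by choosing the interpolations on the intermediate annuli as lifts of one another through the commutative diagrams. That form is exactly the ``single Beltrami differential pulled back through the combined $f,g$ dynamics'' you propose, and it has already been integrated.

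Unwinding the piecewise definition of $G$ in Theorem~C then does the rest. Its branches are $g\circ f$, $\mathrm{Cov}_0^g\circ f$, $g^{-1}\circ\mathrm{Cov}_0^f$ and the group words. So $F$ and $\mathrm{Cov}_0^Q\circ\mathrm{Cov}_0^P$ agree on the open set $T\cup S$, where $S=\phi_+(B_\varepsilon\cap\Delta_{\sigma_\varepsilon})$ is the analogous tile, and hence everywhere.
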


Finally, we make precise the assertion that these matings are new in kind. The mechanism behind the time-reversibility of all previous constructions can be exhibited in one line: if $Q$ is a rational map, $J$ is an involution of the sphere, and $F = J\circ \Cov_0^Q$, then $J$ conjugates $F$ and $F^{-1}$. Indeed, the defining relation of $\Cov_0^Q$ is symmetric in $z$ and $w$, so $\Cov_0^Q = (\Cov_0^Q)^{-1}$; hence $J\circ F\circ J = J\circ(J\circ \Cov_0^Q)\circ J = \Cov_0^Q\circ J = F^{-1}$. It follows that every mating of the classical form $J\circ \Cov_0^Q$ is time-reversible, and the involution J carries one limb of its limit set conformally onto the other; in particular, the two limbs are conformally equivalent. Conversely, reversibility constrains the limbs:

\begin{prop}\label{main.time-reversibility-condition}
Let $F$ be a mating between the pair $(f,g)$ and $\Gamma_{p,q}$ as in Theorem~A. If $F$ is conjugate to $F^{-1}$ --- by any M\"{o}bius map, not necessarily an involution --- then the compositions $g \circ f$ and $f \circ g$ are conformally conjugate rational maps.
\end{prop}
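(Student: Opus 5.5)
Suppose $\phi$ is a M\"obius map with $\phi\circ F\circ\phi^{-1}=F^{-1}$. The plan is to show that $\phi$ must exchange the two limbs $\Lambda_-$ and $\Lambda_+$ of the limit set. Once that is done, we transport the pinched polynomial-like restriction of $F$ on $\Lambda_-$, which is hybrid conjugate to $g\circ f$, into a pinched polynomial-like restriction of $F^{-1}$ on $\Lambda_+$. By the definition of mating, $F^{-1}$ on $\Lambda_+$ is also hybrid conjugate to $f\circ g$. The final step upgrades these two hybrid conjugacies to a conformal conjugacy between $g\circ f$ and $f\circ g$.

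\textbf{Step 1: $\phi$ preserves the decomposition.} The sets $\Lambda$ and $\Omega$ are fully invariant under $F$, hence also under $F^{-1}$. They are also characterised dynamically. $\Omega$ is the set where the branches of $F$ act properly discontinuously, conjugate to the $\Gamma_{p,q}$-action. $\Lambda$ is where forward or backward iterates of $F$ are non-normal or fail to be equicontinuous. Both descriptions are symmetric under time reversal, so $\phi(\Lambda)=\Lambda$ and $\phi(\Omega)=\Omega$.

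Next, $\Lambda_-$ is the part of $\Lambda$ on which $F$ acts as a $pq\!:\!1$ map, each point having $pq$ images that remain in $\Lambda_-$ under the backward branch. Equivalently, $\Lambda_-$ is forward invariant under $F$ with $F^{-1}$ mapping it into $\Omega\cup\Lambda_-$ only through a single-valued branch. The roles of $F$ and $F^{-1}$ are exchanged on $\Lambda_+$. Since $\phi$ conjugates $F$ to $F^{-1}$, it maps the set with the "$\Lambda_-$-property for $F$" onto the set with the "$\Lambda_-$-property for $F^{-1}$", which is $\Lambda_+$. Hence $\phi(\Lambda_-)=\Lambda_+$ and $\phi$ fixes the pinch point $\Lambda_-\cap\Lambda_+$.

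I expect this characterisation of the limbs to be the main obstacle. I would make it concrete as follows. On $\Lambda_-$, the forward correspondence $F$ restricted to $\Lambda_-$ is a genuine $pq$-valued map, while $F^{-1}$ restricted to $\Lambda_-$ is single-valued. This is the pinched polynomial-like structure. On $\Lambda_+$ the reverse holds. A conjugacy between $F$ and $F^{-1}$ must exchange these valency properties.

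\textbf{Step 2: transport the polynomial-like restriction.} Let $(U,U',F|)$ be the pinched polynomial-like restriction on a neighbourhood of $\Lambda_-$, hybrid conjugate to $g\circ f$. Then $(\phi(U),\phi(U'),\phi F\phi^{-1}|)=(\phi(U),\phi(U'),F^{-1}|)$ is a pinched polynomial-like restriction of $F^{-1}$ with filled set $\phi(\Lambda_-)=\Lambda_+$. Because $\phi$ is conformal, this restriction is still hybrid conjugate to $g\circ f$. On the other hand, it is hybrid equivalent to the given restriction of $F^{-1}$ on $\Lambda_+$, which is hybrid conjugate to $f\circ g$. The hybrid class of the germ is determined by the restriction of the dynamics to a neighbourhood of the filled set, so any two such restrictions with the same filled set are hybrid equivalent. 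Composing the conjugacies therefore gives a hybrid conjugacy between $g\circ f$ and $f\circ g$ on neighbourhoods of their filled Julia sets, which are connected by hypothesis.

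\textbf{Step 3: upgrade to a conformal conjugacy.} Both compositions lie in $\mathrm{Per}_1^{pq}(1)$, with fully invariant simply connected parabolic basins. For $f\circ g$ this holds because $f\circ g$ is semiconjugate to $g\circ f$ via $f$ and $g$. By the parabolic straightening/uniqueness theory used earlier for these classes, a hybrid conjugacy extends across the basin. The extension uses the canonical conformal model of the parabolic basin dynamics; all maps in the class have basin dynamics conformally conjugate to a fixed Blaschke model. Gluing gives a global quasiconformal conjugacy that is conformal on the basin and has $\bar\partial=0$ a.e. on $K$. By Weyl's lemma it is M\"obius. Hence $g\circ f$ and $f\circ g$ are conformally conjugate.
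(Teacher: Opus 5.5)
Your architecture is the paper's, and Steps 1--2 are at the paper's level of rigour: $\phi$ preserves $\Lambda$ and $\Omega$, the $F$/$F^{-1}$ asymmetry on the two limbs forces $\phi(\Lambda_-)=\Lambda_+$, and transporting the pinched polynomial-like restriction makes $g\circ f$ and $f\circ g$ hybrid equivalent. The genuine gap is the upgrade in Step 3.

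It is false that every map in $\Per_1^{d}(1)$ with connected filled Julia set has basin dynamics conformally conjugate to one fixed Blaschke model, once $d\ge 3$; here $d=pq\ge 4$. By Riemann--Hurwitz the basin $\mathcal{A}$ contains $d-1$ critical points, and their configuration is a conformal invariant of $R|_{\mathcal{A}}$. For instance, whether they coalesce into a single critical point of order $d-1$ is already an invariant. The model $\frac{(d+1)z^d+(d-1)}{(d+1)+(d-1)z^d}$ of Remark~\ref{rmk.roots} is the external class only of the subfamily $\mathcal{B}_d$.

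Concretely, uniformize $\mathcal{A}(g\circ f)$ and $\mathcal{A}(f\circ g)$, using $f^{-1}(\mathcal{A}(f\circ g))=\mathcal{A}(g\circ f)$ and its analogue for $g$. This turns $f$ and $g$ into Blaschke products $B_f,B_g$ of degrees $p,q$. The external classes of $g\circ f$ and $f\circ g$ are then $B_g\circ B_f$ and $B_f\circ B_g$: semiconjugate via $B_f$, but in general not conjugate. A hybrid conjugacy near $K$ glues to a basin conjugacy only if these agree compatibly with boundary values. $F$ cannot supply that, because Definition~\ref{deft.parabolic_mating} and the surgery of Theorem~\ref{main.parabolic_mating} involve only hybrid classes of the restrictions, so the external data of $(f,g)$ is not recorded in $F$.

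The paper's proof also settles this step in one sentence (``since $K(g\circ f)$ and $K(f\circ g)$ are both connected\ldots''), so there is no ingredient there you missed. Still, your justification fails, and what your argument (and the paper's) actually establishes is hybrid equivalence near the filled Julia sets. That is all the paper's non-reversible example needs, since there the two compositions already differ topologically on $K$.

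Two smaller slips in Step 1, neither fatal.

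\begin{itemize}
\item \emph{Valency reversed.} With the paper's convention ($d_1:d_2$ = preimages : images), $F|_{\Lambda_-}$ is the single-valued branch $g\circ f$, and it is $F^{-1}$ that is $pq$-valued on $\Lambda_-$. Indeed $F^{-1}(\Lambda_-)\subseteq\Lambda_-$, while a generic $z\in\Lambda_-$ has only one of its $pq$ images under $F$ in $\Lambda_-$, the other $pq-1$ lying in $\Lambda_+$. So $\Lambda_-$ is not forward invariant, and your ``equivalently'' sentence is false. Swapping the roles, which is the paper's remark that $F^{-1}$ near $\Lambda_-$ is the inverse of a rational map, repairs the exchange argument.
\item \emph{Wrong characterization of $\Lambda$.} $\Lambda$ is not the non-normality locus. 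Branches of $F^{n}$ on $\mathrm{int}\,\Lambda$ take values in $\Lambda$, hence omit $\Omega$ and are normal by Montel. Your proper-discontinuity description of $\Omega$ is the one to use.
\end{itemize}
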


\begin{cor}
Let $F$ be a mating as in Theorem~A for which $g\circ f$ and $f\circ g$ are not conformally conjugate. Then $F$ is not conjugate to $F^{-1}$, and $F$ cannot be written in the form $J\circ \Cov_0^R$ for any involution $J$ and any rational map $R$.
\end{cor}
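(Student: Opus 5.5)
The statement to prove is the Corollary. It should follow from Proposition~\ref{main.time-reversibility-condition} together with the one-line computation in the text just before it. The plan is to prove the two assertions in order, the second being a formal consequence of the first.

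\emph{First assertion.} Suppose, for contradiction, that some M\"obius map $\phi$ satisfies $\phi\circ F\circ\phi^{-1}=F^{-1}$. Then $F$ is a mating as in Theorem~A that is conjugate to its inverse. Proposition~\ref{main.time-reversibility-condition} then says that $g\circ f$ and $f\circ g$ are conformally conjugate, which contradicts the hypothesis. Here ``conjugate'' should be read as conjugate by a M\"obius map. This is the only reading allowed, since $F$ is a correspondence on the whole sphere and any conformal automorphism of $\widehat{\C}$ is M\"obius. No further work is needed for this part beyond citing the proposition.

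\emph{Second assertion.} Suppose, for contradiction, that $F=J\circ\Cov_0^R$ for some involution $J$ and some rational map $R$. I will argue by the displayed computation.
\begin{itemize}
\item The relation $\frac{R(w)-R(z)}{w-z}=0$ is symmetric in $z$ and $w$, so $\Cov_0^R=(\Cov_0^R)^{-1}$.
\item Therefore $F^{-1}=(\Cov_0^R)^{-1}\circ J^{-1}=\Cov_0^R\circ J$.
\item Therefore $J\circ F\circ J=J\circ J\circ\Cov_0^R\circ J=\Cov_0^R\circ J=F^{-1}$.
\end{itemize}
So $F$ is conjugate to $F^{-1}$, which contradicts the first assertion.

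One point needs care: ``involution'' must mean a M\"obius (conformal) involution of $\widehat{\C}$, so that the first assertion applies. If the paper also allows anti-holomorphic involutions, the conjugacy would be anti-conformal and the Proposition would not apply directly. I will state explicitly that $J$ is understood as a M\"obius involution, consistent with the classical constructions $J\circ\Cov_0^Q$ recalled in the introduction.

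The only real subtlety is this consistency of the notion of conjugacy between the Corollary and Proposition~\ref{main.time-reversibility-condition}. The rest is immediate, and the corollary carries no obstacle of its own: all the substance lies in the Proposition.
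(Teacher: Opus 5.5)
Your proposal is correct and follows the route the paper intends. The first claim is Proposition~\ref{main.time-reversibility-condition} read contrapositively, and the second combines the displayed computation $J\circ F\circ J=F^{-1}$ with the first claim. Your caveat about $J$ can be dropped: an antiholomorphic $J$ would make $J\circ\Cov_0^R$ a correspondence with non-constant antiholomorphic branches, which cannot equal the holomorphic $F$, so $J$ is necessarily M\"obius.
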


Thus, for instance, for the pair of Figure~\ref{fig:mating-asymmetric}, which is $f(z)=z^2/(z+1/2)$, and $g(z)=(2z^2-z-1/2)/(2z+1)$, the compositions are not hybrid equivalent and the mating is not conjugate to its own inverse (see the comment right after the proof of Proposition \ref{main.time-reversibility-condition}); whereas for $f(z)=g(z)=z^2/(z+1/2)$ (Figure~\ref{fig:matings}, left), the two compositions coincide and the mating is time-reversible. The criterion is not restricted to the correspondences constructed in this paper: any correspondence whose restricted forward and backward branches are conjugate to the two compositions of a pair of maps --- such as those of \cite{luo-mj-mukherjee-2026} --- is conjugate to its own inverse only if the two compositions are conformally equivalent, and is therefore generically not time-reversible. Whether the converse of Proposition 1.1 holds is a rigidity question, recorded as Question (1) in Subsection 1.4.

The dichotomy of Proposition 1.1 is governed by a question of independent classical interest: when are the two compositions $g\circ f$ and $f\circ g$ dynamically equivalent? For polynomials, this belongs to the decomposition theory founded by Ritt \cite{ritt-22,ritt-23}: the prime factors of a polynomial under composition are essentially unique, the admissible exchanges of adjacent factors are classified, and the commuting pairs are known (see also \cite{eremenko-1989, pakovich-2021}). For rational maps, no comparably complete theory exists, despite substantial partial results --- and our matings can be regarded as a dynamical probe of exactly this gap: the correspondence $F$ realizes both orderings of the pair $(f,g)$ simultaneously, together with intertwining both branches conjugate to $f$ and $g$ themselves, so that the difference between $\Lambda_-$ and $\Lambda_+$ makes the commutator of $f$ and $g$ \textit{visible} in a single dynamical plane. It is tempting to recall that the very first picture of what would later be known as the Mandelbrot set appeared in the work of Brooks and Matelski on discreteness of two-generator subsgroups of $\PSL(2, \C)$ --- an iteration on traces of words in a pair of M\"{o}bius transformations, tied to the commutator through Jorgensen's inequality. Firfty years later, pairs of maps and two-generator free products meet again on the sphere, with non-commutativity once more as the organizing theme.

\subsection{Background and prior constructions}\label{subsec:background}

The observation that algebraic correspondences can realize matings dates back to \cite{bullett-penrose-1994}, where certain correspondences in a one-parameter family were shown to mate the modular group $PSL(2,\Z)$ with quadratic polynomials having connected Julia set; the general framework of regular and limit sets for correspondences was developed in \cite{bullett-penrose-2001}, and a combination criterion for covering correspondences in \cite{bullett-2000}. Matings of Chebyshev-like maps with Hecke groups were constructed in \cite{bullett-freiberger-2003, bullett-freiberger-2005}, and a transcendental family in \cite{bullett-freiberger-2008}. In subsequent work by the two first authors \cite{bullett-lomonaco-2019, bullett-lomonaco-2022, bullett-lomonaco-2024} the picture became clearer: the Bullett--Penrose correspondences more appropriately realize matings between the modular group and the parabolic family $\mathrm{Per}_1(1)=\{\, z+1/z+A \;:\; A \in \mathbb{C} \,\}$, and this approach culminated in the theorem \cite{bullett-lomonaco-2024} that the `modular Mandelbrot set' is dynamically homeomorphic to the parabolic Mandelbrot set (itself homeomorphic to the Mandelbrot set by Petersen--Roesch \cite{petersen-roesch-2021}). A complete dynamical theory for this family --- B\"ottcher coordinates, landing of periodic geodesics, a sharpened Yoccoz inequality --- was developed in \cite{bullett-lomonaco-2022}, deformations of the modular group as quasifuchsian correspondences were studied in \cite{bullett-2010,bullett-curtis-2012}, and the parameter space program has recently been extended to Modular Multibrot sets and Belyi polynomials \cite{bullett-lobato-lomonaco-navarro-2025a,bullett-lobato-lomonaco-navarro-2025b,bullett-lobato-lomonaco-navarro-ratis-laude-2025}.

On the Kleinian side of the same school, the first author and W.~Harvey \cite{bullett-harvey-2000} applied the Douady--Hubbard theory of polynomial-like maps \cite{douady-hubbard-1985} to mate quadratic polynomials with representations of $C_2 * C_3$ having totally disconnected limit set; the third author \cite{ratis-laude-2025} generalized this to all degrees and proved analyticity and continuity of the mating map on parameter space; and P.~Ha\"issinsky and the first author \cite{bullett-haissinsky-2007} pinched the correspondences of \cite{bullett-harvey-2000} to reach the modular group itself.

Independently, motivated by problems of statistical physics and the theory of quadrature domains \cite{lee-makarov-2016}, M.~Lyubich, N.~Makarov, S.~Mukherjee and collaborators developed an anti-holomorphic counterpart: Schwarz reflection maps arising as conformal matings of reflection groups with anti-rational maps \cite{lee-lyubich-makarov-mukherjee-2021, lee-lyubich-makarov-mukherjee-2023, lee-lyubich-makarov-mukherjee-2025}, the David extension and welding machinery underlying them \cite{lyubich-merenkov-mukherjee-ntalampekos-2025}, realization theorems for antiholomorphic correspondences \cite{lyubich-mazor-mukherjee-2024a, lyubich-mazor-mukherjee-2024b}, and connections between limit sets of kissing reflection groups and Julia sets of critically fixed anti-rational maps \cite{lodge-luo-mukherjee-2022}; see \cite{lomonaco-mukherjee-2026} for a panorama.

Techniques from the holomorphic and anti-holomorphic lines were combined in \cite{bullett-lomonaco-lyubich-mukherjee-2024}, where the two first authors, together with M.~Lyubich and S.~Mukherjee, proved that all members of a broad family of parabolic rational maps are mateable with the Hecke groups, resolving the parabolic version of a conjecture of Bullett--Freiberger.

A third line, initiated by M.~Mj and S.~Mukherjee, combines groups and maps via orbit equivalence of Bowen--Series maps and David surgery: matings of polynomials with quasiFuchsian punctured-sphere groups \cite{mj-mukherjee-2023}, with genus zero orbifold groups via factor Bowen--Series maps and B-involutions \cite{mj-mukherjee-2025, luo-lyubich-mukherjee-2024}, extensions of Bers simultaneous uniformization to correspondences \cite{mj-mukherjee-2024}, Bers slices, polynomial loci and degeneration in spaces of correspondences \cite{luo-mj-mukherjee-2025}, and combination theorems on hyperelliptic surfaces \cite{mukherjee-viswanathan-2025}. Ergodic and arithmetic aspects of correspondences --- invariant measures, equidistribution, thermodynamic formalism, canonical heights --- have been studied by many authors; for the families closest to this paper see \cite{vanessa-2024, siqueira-smania-2017, siqueira-2023}, and see also \cite{bharali-sridharan-2016, dihn-kaufmann-wu-2020, hemmingsson-2024, ingram-2019}.

All of the mating constructions above share the three features described in the opening: a single map, a doubled filled Julia set, and time-reversibility.

\subsection{Origin of the program and relation to other work}\label{subsec:simultaneous}

The program carried out in this paper was announced in the ICM survey \cite{lomonaco-mukherjee-2026} (arXiv, November 2025), written by the second author jointly with S.~Mukherjee. Section~5.2 of that survey describes the correspondences constructed here --- $pq:pq$ correspondences of the form $\Cov_0^P \circ \Cov_0^Q$, with $P,Q$ polynomials of degrees $p+1$, $q+1$ sharing a common simple critical point, partitioning the sphere into an invariant open set $\Omega$ carrying the parabolic representation of $C_{p+1}*C_{q+1}$ and a one-point union $\Lambda_-\cup\Lambda_+$ of copies of $K(g\circ f)$ and $K(f\circ g)$ --- and cites for this the present work, listed there as reference [23]: ``\textit{Mating Rational Maps with $(p,q,\infty)$ Triangle Groups}, preprint, 2025''.

While this paper was being completed, the preprint \cite{luo-mj-mukherjee-2026} appeared (July 8, 2026), constructing --- for Fuchsian $(p,q,\infty)$-triangle groups with $p,q \geq 3$ --- correspondences of the form $\mathrm{Cov}_0^P \circ \mathrm{Cov}_0^Q$ combining the group with a pair of Blaschke products, via David surgery; a generalization to pairs of polynomials or parabolic rational maps is mentioned there as a remark \cite[Remark~4.9(3)]{luo-mj-mukherjee-2026}. None of the results of the present paper appear in \cite{luo-mj-mukherjee-2026}. See also Remark \ref{rmk.roots} for examples where the techniques present in this paper can also be used to obtain some of the correspondences constructed in \cite{luo-mj-mukherjee-2026}.


\subsection{Questions}\label{subsec:questions}

The results above suggest several directions.

\begin{enumerate}
\item \emph{Reversibility and Ritt theory.} By Proposition~\ref{main.time-reversibility-condition}, reversibility of the mating is governed by the equivalence of $g \circ f$ and $f \circ g$; when the maps are polynomials this connects to classical questions of Ritt \cite{ritt-22, ritt-23} on commuting maps and shared composition factors, and to the coincidence problem for Julia sets \cite{beardon-90, beardon-92, levin-przytycki-97}. Is reversibility of $F$ equivalent to an explicit Ritt-type relation between $f$ and $g$?

\item \emph{Parameter spaces.} The family of pairs $(f,g)$ with $g \circ f \in \mathrm{Per}_1^{pq}(1)$ carries a natural `composition connectedness locus'; does the dynamical homeomorphism of \cite{bullett-lomonaco-2024} between the modular and parabolic Mandelbrot set admit an analogue in this two-map setting?

\item \emph{Pinching.} Can the orbit matings of Theorem~\ref{main.hyperbolic_mating} be pinched, as in \cite{bullett-haissinsky-2007}, to reach representations with parabolic elements, or degenerations of Bers type?
\end{enumerate}

\subsection{Structure of the paper}\label{subsec:structure}

Section~\ref{sec.preliminaries} presents the parabolic classes $\mathcal{P}_d$ and $\mathrm{Per}_1^d(1)$, the groups $\Gamma_{p,q}$, and the definitions concerning correspondences and their iteration. Section~\ref{sec.proof_of_theorem}
proves Theorems~\ref{main.parabolic_mating} and~\ref{main.parabolic_cov}, as well as Proposition \ref{main.time-reversibility-condition}. Section~\ref{sec.hyperbolic_surgery} defines the Kleinian perturbations of $\Gamma_{p,q}$ and proves Theorems~\ref{main.hyperbolic_mating} and~\ref{main.hyperbolic_cov}.



\section{Parabolic maps and groups}\label{sec.preliminaries}

In this section, we introduce the classes $\mathcal{P}_d$ and $\Per_1^d(1)$ of parabolic maps, and the groups $\Gamma_{p,q}$ that are representations of $C_{p+1}\ast C_{q+1}$.

\subsection{Parabolic maps}\label{subsec.parab_maps}

We define $\mathcal{P}_d$ to be the class of degree $d$ rational maps $R$ for which $\infty$ is a fixed point of multiplier $1$ with a unique attracting direction. In $\mathcal{P}_d$, we identify the subclass $\Per_1^d(1)$ of degree $d$ rational maps $R$ that possess a fully invariant immediate basin of attraction $\mathcal{A}(R)$; notice that $\mathcal{A}(R)$ must be connected since $R$ presents a unique attracting direction at $\infty$.

\begin{prop}\label{prop.alignment}

    Let $f \in \mathcal{P}_p, g \in \mathcal{P}_q$ be parabolic maps of degrees $p$ and $q$, respectively, and assume that the degree $pq$ map $g\circ f$ lies in $\Per_1^{pq}(1)$. Then $f\circ g \in \Per_1^{pq}(1)$. Furthermore, there are maps $\tilde{f} \in \mathcal{P}_p, \tilde{g} \in \mathcal{P}_q$ such that $g\circ f$ and $f\circ g$ are affinely conjugate to $\tilde{g}\circ \tilde{f}$ and $\tilde{f}\circ \tilde{g}$, respectively, and the attracting directions at $\infty$ for $\tilde{f}, \tilde{g}, \tilde{g}\circ \tilde{f}$ and $\tilde{f}\circ \tilde{g}$ are all positive real.
    
\end{prop}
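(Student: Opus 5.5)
The plan has two parts. First, transfer the fully invariant basin from $g\circ f$ to $f\circ g$. Second, normalize by a single affine conjugacy, which will require controlling the first-order coefficients of $f$ and $g$ at $\infty$.

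\emph{Local form at $\infty$.} Since $\infty$ is a fixed point of multiplier $1$ with a unique attracting direction, near $\infty$ I write
\[
f(z)=z+a_f+O(1/z),\qquad g(z)=z+a_g+O(1/z),\qquad a_f,a_g\neq 0 .
\]
A double fixed point has exactly one petal, and the attracting direction is the direction of $a_f$ (respectively $a_g$). Composing gives
\[
g\circ f(z)=z+(a_f+a_g)+O(1/z),\qquad f\circ g(z)=z+(a_f+a_g)+O(1/z).
\]
The hypothesis $g\circ f\in\mathcal{P}_{pq}$ forces $s:=a_f+a_g\neq 0$. Hence $f\circ g\in\mathcal{P}_{pq}$ too, with the same attracting direction $s$.

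\emph{Full invariance.} Let $A=\mathcal{A}(g\circ f)$, so that $(g\circ f)^{-1}(A)=A$, and put $B:=g^{-1}(A)$, which is open. Then $f^{-1}(B)=(g\circ f)^{-1}(A)=A$. Since $f$ is surjective, this gives $f(A)=B$, so $B$ is connected. Moreover
\[
(f\circ g)^{-1}(B)=g^{-1}\bigl(f^{-1}(B)\bigr)=g^{-1}(A)=B .
\]
Next, the identity $(f\circ g)^n\circ f=f\circ(g\circ f)^n$ and $f(\infty)=\infty$ show that every point of $B=f(A)$ is attracted to $\infty$ under $f\circ g$. Also $B$ contains all points of an attracting petal of $f\circ g$, since $f$ is close to a translation near $\infty$ and maps a petal of $g\circ f$ into one of $f\circ g$. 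So $B$ is a connected, fully invariant open set inside the parabolic basin of $f\circ g$ that meets the immediate basin. It therefore equals the immediate basin $\mathcal{A}(f\circ g)$, and $f\circ g\in\Per_1^{pq}(1)$. This also yields the relations $f^{-1}(\mathcal{A}(f\circ g))=\mathcal{A}(g\circ f)$ and $g^{-1}(\mathcal{A}(g\circ f))=\mathcal{A}(f\circ g)$, which I will reuse below.

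\emph{Normalization.} I take $\tilde f=\lambda^{-1}f(\lambda z)$ and $\tilde g=\lambda^{-1}g(\lambda z)$, possibly composed with translations, which do not change the coefficients $a_\bullet$. Then $\tilde g\circ\tilde f$ and $\tilde f\circ\tilde g$ are affinely conjugate to $g\circ f$ and $f\circ g$, and every coefficient is multiplied by $\lambda^{-1}$. Changing the multiplier at $\infty$ is impossible, since it is an affine invariant that must remain $1$. So the only freedom is a common rotation, and choosing $\lambda$ with $\lambda^{-1}s>0$ makes the directions of both compositions positive real.

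\emph{Main obstacle.} To make the directions of $\tilde f$ and $\tilde g$ positive real as well, I need $a_f/a_g>0$. This is not a formal consequence of the expansions; it must come from the hypothesis on the basin. My first attempt would use the relations above, which say that $f$ and $g$ map the two basins onto each other as proper maps. I would combine this with the local picture near $\infty$: each basin contains a sector of opening nearly $2\pi$ about $s$, while $J$ accumulates at $\infty$ only tangentially to $-s$, inside a cusp. If $a_f$ were not a positive multiple of $s$, I would try to show that the repelling direction $-a_f$ of $f$ (or $-a_g$ of $g$) lies inside the large sector of $\mathcal{A}$. I would then derive a contradiction with $f^{-1}(\mathcal{A}(f\circ g))=\mathcal{A}(g\circ f)$. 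Concretely, I would look for points of $K(g\circ f)$ forced into that sector, for instance via a Fatou coordinate of $g\circ f$ in which $f$ acts as a translation by the $f$-part of the horn-map.

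If this argument fails, I would fall back on the weaker normalization: only the two compositions have positive real direction. I would then check whether that suffices for the later constructions.
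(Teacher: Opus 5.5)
The genuine gap is in the normalization. You allow only one affine coordinate change for both maps, conclude that $a_f/a_g$ is invariant, and then try to prove $a_f/a_g>0$ from the basin hypothesis. That statement is false in general, so no dynamical argument can give it.

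To see this, take a translation $\tau(z)=z+t$ and replace the pair by $(\tau\circ f,\,g\circ\tau^{-1})$. Then $g\circ f$ is literally unchanged, and $f\circ g$ is replaced by its conjugate $\tau\circ(f\circ g)\circ\tau^{-1}$, so every hypothesis persists. Meanwhile the constants become $a_f+t$ and $a_g-t$, whose ratio can be made negative or non-real. Your fallback, which normalizes only the two compositions, does not prove the proposition. It would also not suffice later, since the proof of Lemma~\ref{lema.warschawski} uses $g(z)=z+1+O(1/z)$ and a real shift $\partial V=\partial U+C_1$.

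The missing idea is that this translation freedom is built into the statement. The maps $f$ and $g$ go between two different spheres, so you may use different affine coordinates: $\psi$ on the first sphere and $\varphi$ on the second. Set $\tilde f=\varphi\circ f\circ\psi^{-1}$ and $\tilde g=\psi\circ g\circ\varphi^{-1}$. Then $\tilde g\circ\tilde f=\psi\circ(g\circ f)\circ\psi^{-1}$ and $\tilde f\circ\tilde g=\varphi\circ(f\circ g)\circ\varphi^{-1}$, which are affine conjugates as required.

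Choose $\varphi,\psi$ with the same linear part $2/s$, so that $\tilde f,\tilde g$ stay tangent to the identity. Let their translation parts differ by $(a_g-a_f)/s$, namely $\psi(z)=2z/s$ and $\varphi(z)=(2z+a_g-a_f)/s$. This redistributes $s=a_f+a_g$ between the two maps: $\tilde f(z)=z+1+O(1/z)$, $\tilde g(z)=z+1+O(1/z)$, and both compositions equal $z+2+O(1/z)$.

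This is the paper's normalization. Note that the paper's displayed $\varphi(z)=(z+b_0-a_0)/(b_0+a_0)$ is missing the factor $2$ in front of $z$; as printed, $\tilde f$ would have linear part $z/2$.

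Your first half is fine and close to the paper's argument. The paper shows directly that the full basins satisfy $\mathcal{A}_\infty(f\circ g)=f(\mathcal{A}_\infty(g\circ f))$, while you build the completely invariant set $B=g^{-1}(A)=f(A)$. One repair is needed there. A connected, completely invariant open subset of a basin need not be the whole basin: removing a grand orbit, which is discrete in the basin, gives a counterexample. So you need $B$ to \emph{contain} an absorbing domain of $f\circ g$, not merely that $f$ maps a petal into a petal. This holds because $A\supset\{\mathrm{Re}(z/s)>R\}$ and $g$ is close to a translation, so $B=g^{-1}(A)\supset\{\mathrm{Re}(z/s)>R'\}$ for $R'$ large.
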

\begin{proof}

    Notice that, since $f\in \mathcal{P}_p$ and $g\in \mathcal{P}_q$, we have constants $a_0, b_0 \in \C\setminus\{0\}$ such that
    \[ f(z) = z + a_0 + \mathcal{O}\left( \frac{1}{z} \right) \text{ and } g(z) = z + b_0 + \mathcal{O}\left( \frac{1}{z} \right) \]
    near $\infty$. Thus $g\circ f(z) = z + a_0 + b_0 + \mathcal{O}(1/z)$ and the assumption that $g\circ f \in \Per_1^{pq}(1) \subset \mathcal{P}_{pq}$ guarantees that $a_0 + b_0 \neq 0$. Thus $f\circ g \in \mathcal{P}_{pq}$, and we must show that it presents a fully invariant connected immediate basin of attraction of $\infty$. It will suffice to show that $\mathcal{A}_\infty(f\circ g) = f(\mathcal{A}_\infty(g\circ f))$, where $\mathcal{A}_\infty(R)$ denotes the full basin of attraction of $\infty$ for $R \in \mathcal{P}_d$, since $\mathcal{A}_\infty(g\circ f) = \mathcal{A}(g\circ f)$ is connected by hypothesis.
    
    We begin by noticing that
    \[ z \in \mathcal{A}_\infty(g\circ f) \iff (g\circ f)^n(z) \xrightarrow[n\to \infty]{} \infty \ \text{ along the direction of } a_0 + b_0. \]
    That in turn means that $(f\circ g)^n(f(z)) = f\circ(g\circ f)^n(z) \xrightarrow{} \infty$ along $a_0 + b_0$, since $f$ is tangent to the identity map at $\infty$. This also the attracting direction of $f\circ g(z) = z + a_0 + b_0 + \mathcal{O}(1/z)$. We deduce that $f(z) \in \mathcal{A}_\infty(f\circ g)$, and conclude that $f(\mathcal{A}_\infty(g\circ f)) \subseteq \mathcal{A}_\infty(f\circ g)$. To obtain equality, take any $z \in \mathcal{A}_\infty(f\circ g)$ and any $w \in \widehat{\C}$ such that $z = f(w)$. Then $(g\circ f)^n(w) = g\circ (f\circ g)^{n-1}(z)$, which converges to $\infty$ along the direction of $a_0 + b_0$ since $z \in \mathcal{A}_\infty(f\circ g)$, and $g$ is tangent to the identity map at $\infty$, meaning that $w \in \mathcal{A}_\infty(g\circ f)$. Hence  $f^{-1}(\mathcal{A}_\infty(f\circ g)) \subseteq \mathcal{A}_\infty(g\circ f)$ and so $\mathcal{A}_\infty(f\circ g) \subseteq f(\mathcal{A}_\infty(g\circ f))$.

     Now, set $\varphi(z) := (z + b_0 - a_0)/(b_0 + a_0)$ and $\psi(z) := 2z/(b_0 + a_0)$, and define $\tilde{f} := \varphi\circ f\circ \psi^{-1}$ and $\tilde{g} := \psi\circ g\circ \varphi^{-1}$. We then have $\tilde{f} \in \mathcal{P}_p, \tilde{g} \in \mathcal{P}_q$, and:
     \[ \tilde{f}(z) = z + 1 + \mathcal{O}\left( \frac{1}{z} \right); \quad \tilde{g}(z) = z + 1 + \mathcal{O}\left( \frac{1}{z} \right); \]
     \[ \tilde{g}\circ \tilde{f}(z) = \psi\circ (g\circ f)\circ \psi^{-1}(z) = z + 2 + \mathcal{O}\left( \frac{1}{z} \right); \]
     \[ \tilde{f}\circ \tilde{g}(z) = \varphi\circ(f\circ g)\circ\varphi^{-1}(z) = z + 2 + \mathcal{O}\left( \frac{1}{z} \right) \]
     for $z$ near $\infty$, concluding the proof.
    
\end{proof}

Given a map $R \in \Per_1^d(1)$, one can define its \textit{filled Julia set} to be the complement of the basin of attraction of $\infty$:
\[ K(R) := \widehat{\C}\setminus \mathcal{A}(R). \]

\begin{cor}

    Under the hypotheses of the proposition above, $f(K(g\circ f)) = K(f\circ g)$ and $g(K(f\circ g)) = K(g\circ f)$.
    
\end{cor}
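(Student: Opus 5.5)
The plan is to pass to complements and use the basin identities from the proof of Proposition~\ref{prop.alignment}. There we showed
\[ \mathcal{A}_\infty(f\circ g) = f(\mathcal{A}_\infty(g\circ f)) \quad\text{and}\quad f^{-1}(\mathcal{A}_\infty(f\circ g)) \subseteq \mathcal{A}_\infty(g\circ f). \]
The reverse inclusion $\mathcal{A}_\infty(g\circ f)\subseteq f^{-1}(\mathcal{A}_\infty(f\circ g))$ follows from the first identity. Hence $\mathcal{A}_\infty(g\circ f) = f^{-1}(\mathcal{A}_\infty(f\circ g))$, so the basin is completely invariant under this pullback. Both compositions lie in $\Per_1^{pq}(1)$ by the proposition, so full basins and immediate basins coincide, and $K(\cdot)$ is the complement of $\mathcal{A}_\infty(\cdot)$.

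Taking complements of a full preimage gives $f^{-1}(K(f\circ g)) = K(g\circ f)$. Since $f$ is surjective on $\widehat{\C}$, applying $f$ yields $f(K(g\circ f)) = f(f^{-1}(K(f\circ g))) = K(f\circ g)$.

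For the second identity, the argument of the proposition is symmetric in the roles of $f$ and $g$: swap $(f,p,a_0)$ with $(g,q,b_0)$. This uses only that $g$ is tangent to the identity at $\infty$ and that the attracting direction of both compositions is $a_0+b_0$. We get $g^{-1}(\mathcal{A}_\infty(g\circ f)) = \mathcal{A}_\infty(f\circ g)$, hence $g^{-1}(K(g\circ f)) = K(f\circ g)$, and surjectivity of $g$ gives $g(K(f\circ g)) = K(g\circ f)$.

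The only point needing care is the claimed equality $f^{-1}(\mathcal{A}_\infty(f\circ g)) = \mathcal{A}_\infty(g\circ f)$, in particular the forward inclusion. Here I would recheck directly that $w\in\mathcal{A}_\infty(g\circ f)$ implies $f(w)\in\mathcal{A}_\infty(f\circ g)$. This holds because $(f\circ g)^n(f(w)) = f((g\circ f)^n(w))$ and $f$ is tangent to the identity at $\infty$, so orbits converging to $\infty$ along $a_0+b_0$ are mapped to orbits of the same type.
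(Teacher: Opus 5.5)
Your proposal is correct and is essentially the paper's argument: the corollary is read off from the basin identities established in the proof of Proposition~\ref{prop.alignment}. You also make explicit the full-preimage identity $f^{-1}(\mathcal{A}_\infty(f\circ g)) = \mathcal{A}_\infty(g\circ f)$, which is what justifies passing to complements and then using surjectivity of $f$. (For the second identity, instead of rerunning the argument with $f$ and $g$ swapped, you could also note $g(K(f\circ g)) = g(f(K(g\circ f))) = (g\circ f)(K(g\circ f)) = K(g\circ f)$ by complete invariance.)
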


See Remark \ref{rmk.roots} following the proof of Theorem \ref{main.parabolic_mating} for a mild relaxation of these hypothesis.

\begin{rmk}\label{rmk.distinct_julia_sets}

    Although the maps $f$ and $g$ provide branched coverings of the filled Julia sets $K(g\circ f)$ and $K(f\circ g)$ over one another, these sets are in general different. Notice that, since for a map $R \in \Per_1^d(1)$, we have $\partial K(R) = \partial\mathcal{A}(R) = J(R)$, the question of when the filled Julia sets coincide reduces to when the Julia sets coincide. For polynomials $P, Q$ of the same degree, Beardon \cite{beardon-92} has shown that $J(P) = J(Q)$ if and only if there exists an affine map $L$ such that $Q = L\circ P$. The general case for rational maps has been investigated by several authors (for example see \cite{beardon-90}, \cite{levin-przytycki-97}), but we do not know of any complete characterization. It is obvious, however, that $K(g\circ f) = K(f\circ g)$ when the maps $f$ and $g$ commute --- i.e. when $g\circ f = f\circ g$. Questions of composition factors and commutativity were studied by Ritt \cite{ritt-22, ritt-23}.
    
\end{rmk}

In order to construct our matings, we must consider the notion of \textit{pinched polynomial-like maps}. This has been discussed in the literature several times, both informally and formally (for example in \cite{makienko-1993, bullett-freiberger-2005, lyubich-mazor-mukherjee-2024a}), and is related to the notion of parabolic-like maps introduced by the second author \cite{lomonaco-2014, lomonaco-2015}. Here we shall adopt the definition proposed in \cite{bullett-lomonaco-lyubich-mukherjee-2024}. 

A \textit{polygon} is a closed Jordan disk in $\widehat{\C}$ such that its boundary is a piecewise smooth curve; the points of non-smoothness are called the \textit{corners} of the polygon. A \textit{pinched polygon} is a closed connected set in $\widehat{\C}$ composed of finitely many polygons whose pairwise intersections are either empty or comprised of a single point. These intersection points are called the \textit{pinched points} of the pinched polygon, and the corners of its composing polygons are considered corners of the pinched polygon.

\begin{deft}

    A triple $(R, U', U)$ is a \textit{pinched polynomial-like map} if $U, U'$ are open sets such that $\overline{U} \subset \widehat{\C}$ is a polygon, $\overline{U'} \subset \overline{U}$ is a pinched polygon, $\partial U'\cap \partial U$ is the set of corners of $\overline{U}$ and is contained in the set of corners of $\overline{U'}$, and $f: U' \to U$ is a holomorphic map satisfying:
    \begin{itemize}
        \item $f$ is a branched cover from each component of $U'$ onto $U$;
        \item $f$ extends continuously to $\partial U'$;
        \item the corners and pinched points of $\overline{U'}$ are the pre-images under $f$ of the corners of $\overline{U}$.
    \end{itemize}
    Given a pinched polynomial-like map, we can define its \textit{filled Julia set} to be
    \[ K(R) := \bigcap_{n\geq \infty}R^{-n}(U'). \]
    
\end{deft}

We take two copies of the Riemann sphere and define our parabolic maps $f$ and $g$ as maps from the first copy to the second, and the second back to the first, respectively. We can now consider pinched polynomial-like restrictions of $g\circ f$ and $f\circ g$. Let $U$ be a pinched neighborhood of $K(g\circ f)$, pinched at $\infty$ --- i.e. $K(g\circ f) \subset \overline{U}$ and $K(g\circ f)\cap \partial U = \{\infty\}$ --- such that $\overline{U}$ is a polygon with a single corner at $\infty$. If $U$ is a small enough neighborhood, then $(g\circ f)^{-1}(U) =: U'$ is an open set such that $\overline{U'}$ is a pinched polygon containing $K(g\circ f)$, and $(g\circ f, U', U)$ is a pinched polynomial-like map. It will be important for our construction that at the pinch point $\infty$ of $U$ the external angle $\theta$ made by $\partial U$ is strictly greater than $0$ (Figure \ref{fig:pinched_polynomial_like_restrictions}). We shall determine this angle later, but it can be taken as small as desired.

Consider the set $f(U') = g^{-1}(U)$. Its closure $\overline{f(U')}$ is another pinched polygon, now containing $K(f\circ g)$. We next choose $V$ a pinched neighborhood of $K(f\circ g)$, pinched at $\infty$, such that $\overline{V}$ is a polygon with a single corner at $\infty$, and, if we set $V' := (f\circ g)^{-1}(V)$, we have $\overline{V'} \subset \overline{f(U')} \subset \overline{V}$. To see that we can do this, simply consider the pre-Fatou coordinates at $\infty$ for the sphere on which $f\circ g$ acts. The boundary of $f(U')$ is a curve that makes an angle of $\theta$ at $\infty$, just like $U$ does, since $f$ is tangent to identity there. We can then take $V$ so that its boundary, in a neighborhood of $\infty$, makes the same angle to the left of $\partial f(U')$, while its image under $f\circ g(z) = z + 2 + \mathcal{O}(1/z)$ falls entirely to the right of $\partial f(U')$. Avoiding intersection away from infinity can be attained by simple perturbations. Notice that $(f\circ g, V', V)$ is a pinched polynomial-like map, and that $\overline{U'} \subset \overline{g(V')} = \overline{f^{-1}(V)} \subset \overline{U}$. See Figure \ref{fig:pinched_polynomial_like_restrictions}. Notice also that $\overline{U'}$ and $\overline{V'}$ both have $pq$ pinch points and corners, as that is the number of pre-images of $\infty$ under $g\circ f$ and $f\circ g$, while $\overline{f(U')}$ and $\overline{g(V')}$ are pinched polygons with $q$ and $p$ pinch points and corners, respectively.

\begin{figure}
    \centering
    \includegraphics[width=0.9\linewidth]{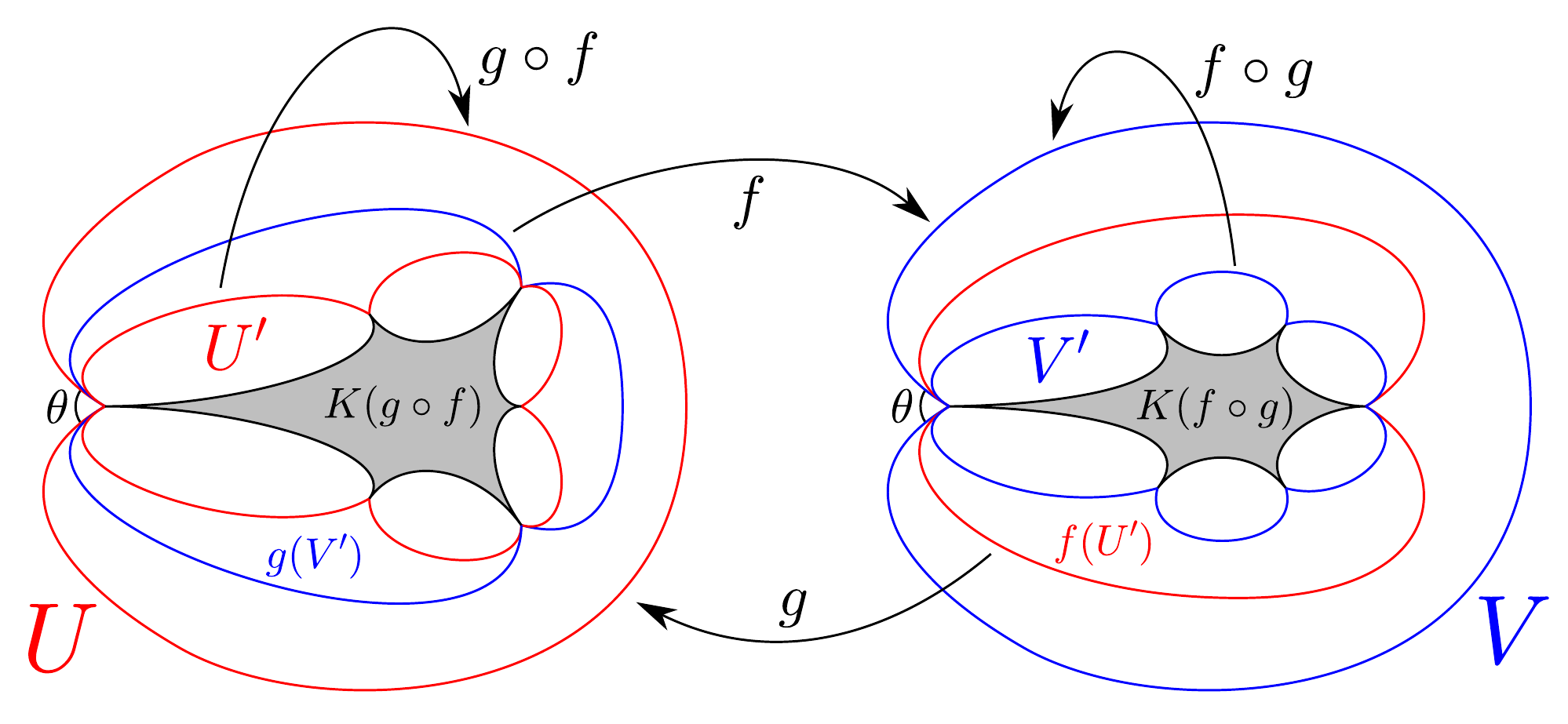}
    \caption{The pinched polynomial-like restrictions of $g\circ f$ and $f\circ g$, for $p = 3$ and $q = 2$. The actions of $f$ and $g$ induce intermediary pinched polygons in the picture.}
    \label{fig:pinched_polynomial_like_restrictions}
\end{figure}

\subsection{The groups}

We shall work in the Poincar\'e disk model $\D$ of the hyperbolic plane. We fix a pair of integers $p,q \geq 2$, and consider groups of orientation-preserving isometries of $\D$, generated by pairs $(\sigma,\rho)$ conjugate to counter-clockwise rotations through angles $2\pi/(p+1)$ and $2\pi/(q+1)$ respectively (in the upper half-plane model of the hyperbolic plane these are subgroups of $PSL(2,\R)$). Up to M\"obius conjugacy such groups are parametrized by the hyperbolic distance between the fixed points of $\sigma$ and $\rho$: there is just one value of this distance for which $\sigma\circ\rho$ is parabolic, and then $\rho\circ\sigma$ is automatically parabolic too. Without loss of generality we may take the geodesic on which the fixed points of $\sigma$ and $\rho$ lie to be the horizontal diameter of $\D$. We now define an explicit group $\Gamma_{p,q}$ as follows. Let $P \in (-1, 0)$ be the unique point on the diameter of $\D$ such that the geodesic rays starting at $P$ and landing at $i$ and $-i$ form an angle of $2\pi/(p + 1)$ with each other, and let $Q \in (0, 1)$ be the point such that the geodesic rays starting at $Q$ and landing at $i$ and $-i$ form an angle of $2\pi/(q + 1)$ with each other. Then we take $\rho$ to be the rotation of $2\pi/(p + 1)$ about $P$, and $\sigma$ to be the rotation of $2\pi/(q + 1)$ about $Q$. The domain $\Delta_\rho \subset \D$ of points to the right of the geodesic rays starting at $P$ and ending at $\pm i$ is a fundamental domain for the action of $\rho$ on $\D$, and similarly the domain $\Delta_\sigma \subset \D$ of points to the left of the geodesic rays starting at $Q$ and ending at $\pm i$ is a fundamental domain for the action of $\sigma$ on $\D$. By Klein's Combination Theorem (see for instance \cite{maskit-1965}), the intersection $\Delta := \Delta_\rho\cap \Delta_\sigma$ is a fundamental domain for the action of $\Gamma$. See Figure \ref{fig:parabolic_fundamental_domains}. The points $i$ and $-i$ are then the 
fixed points of the compositions $\sigma\rho$ and $\rho\sigma$, and both these compositions are parabolic since their fixed points are on the boundary of $\D$.

\begin{figure}
    \centering
    \includegraphics[width=0.25\linewidth]{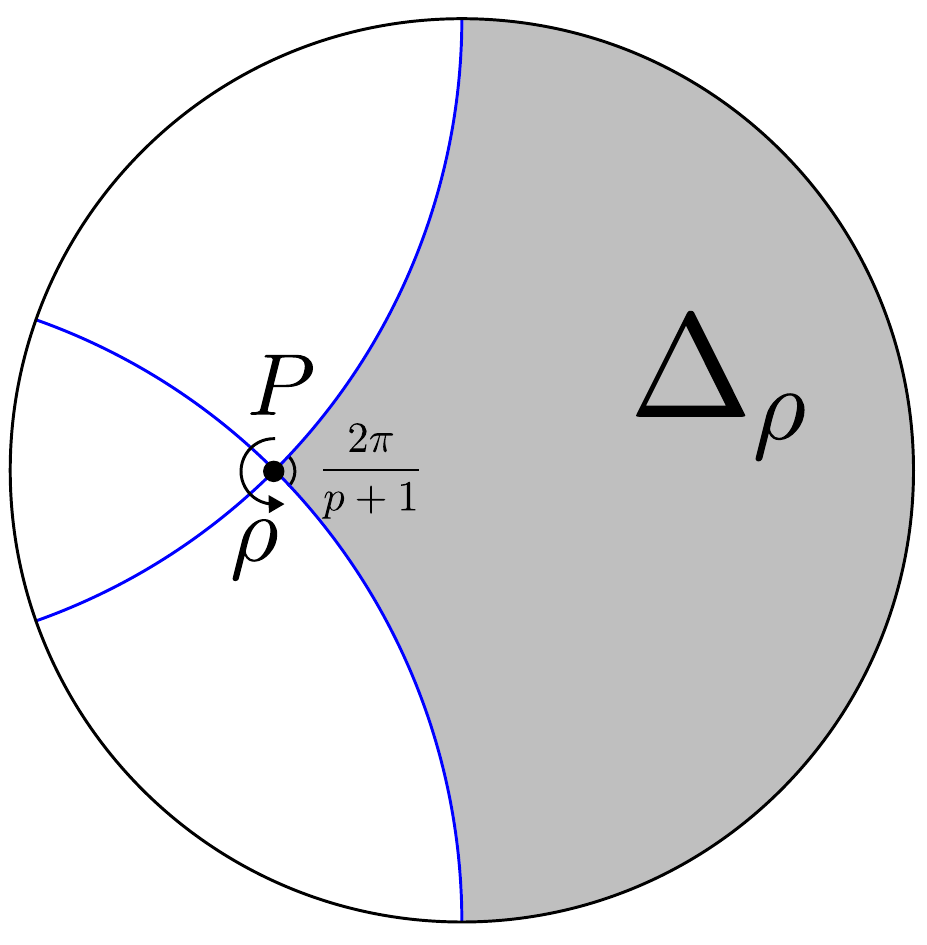}\hspace{0.05\linewidth}
    \includegraphics[width=0.25\linewidth]{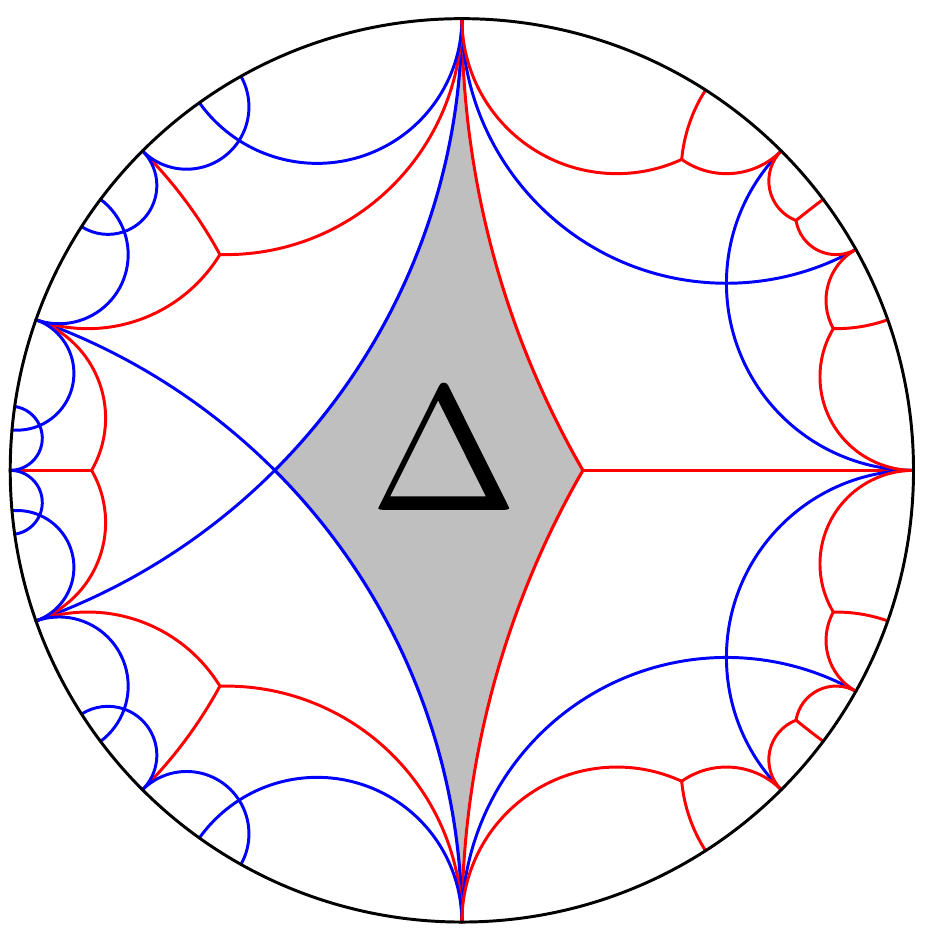}\hspace{0.05\linewidth}
    \includegraphics[width=0.25\linewidth]{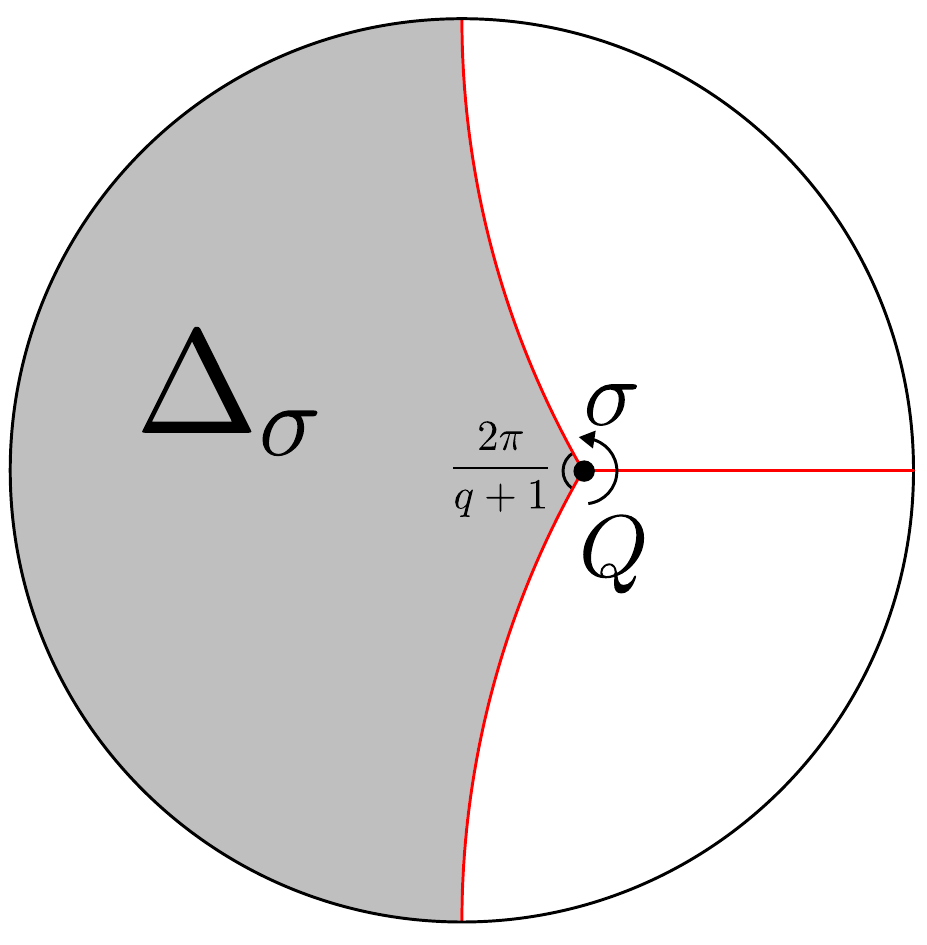}
    \caption{On the left, the fundamental domain for $\rho$, and on the right, the fundamental domain for $\sigma$. In the middle, the intersection between these domains, a fundamental domain for $\Gamma_{p,q} = \left< \rho, \sigma \right>$, together with some of its images under elements of the group.}
    \label{fig:parabolic_fundamental_domains}
\end{figure}

\begin{rmk}

    The group $\Gamma_{p,q}$ is the orientation preserving subgroup of the triangle group $\Delta(p+1, q+1, \infty)$ --- with the hyperbolic triangle of vertices $P, Q$ and $i$ a fundamental domain for the triangle group. Note that while reflection in the real axis conjugates $\sigma\rho$ to $\sigma^{-1}\rho^{-1}$, this symmetry is not an element of $\Gamma_{p,q}$.
    
\end{rmk}

Let us now take $I$ the geodesic from $i$ to $-i$; notice that $I \subset \Delta$. In particular, $I \subset \Delta_\sigma$ and therefore its images $\sigma^nI$, $1\leq n\leq q$, lie in $\D\setminus \Delta_\sigma \subset \Delta_\rho$. Also, since $I$ connects $i$ to $-i$, which are the extremal points of $\s^1$ that belong to $\overline{\Delta_\sigma}$, the curves $\sigma^nI$ connect the extremal points of each of the images of $\Delta_\sigma$ and, therefore, cover the whole half-circle to the right of $\Delta_\sigma$. Combining these two facts, we see that the curves $\rho^m\sigma^nI$, $1\leq n\leq q$ and $1\leq m\leq p$, all lie to the left of $I$ and cover the left half of the circle. Thus the union of the maps $(\rho^m\sigma^n)^{-1} = \sigma^{q+1-n}\rho^{p+1-m}$, each applied to the corresponding segment $\rho^m\sigma^nI$, forms a $pq:1$ map of the left half of the circle onto $I$, similar to the the pinched polynomial-like map from $\partial U'$ to $\partial U$. In fact, it is easy to see that, if one identifies the points $i$ and $-i$ on the right half of the disk, one ends up with a true pinched polynomial-like map. The same can be said for the union of the $(\sigma^n\rho^m)^{-1}$ on the curves $\sigma^n\rho^mI$, $1\leq n\leq q$ and $1\leq m\leq p$, that cover the right half of the circle. See Figure \ref{fig:group_pinched_polynomial_like}. Notice that the curves $\rho^m I$, $1 \leq m\leq p$, and $\sigma^n I$, $1\leq n\leq q$, form intermediary curves in these pinched polynomial-like maps, similar to the picture for the maps $g\circ f$ and $f\circ g$.

\begin{figure}
    \centering
    \includegraphics[width=0.4\linewidth]{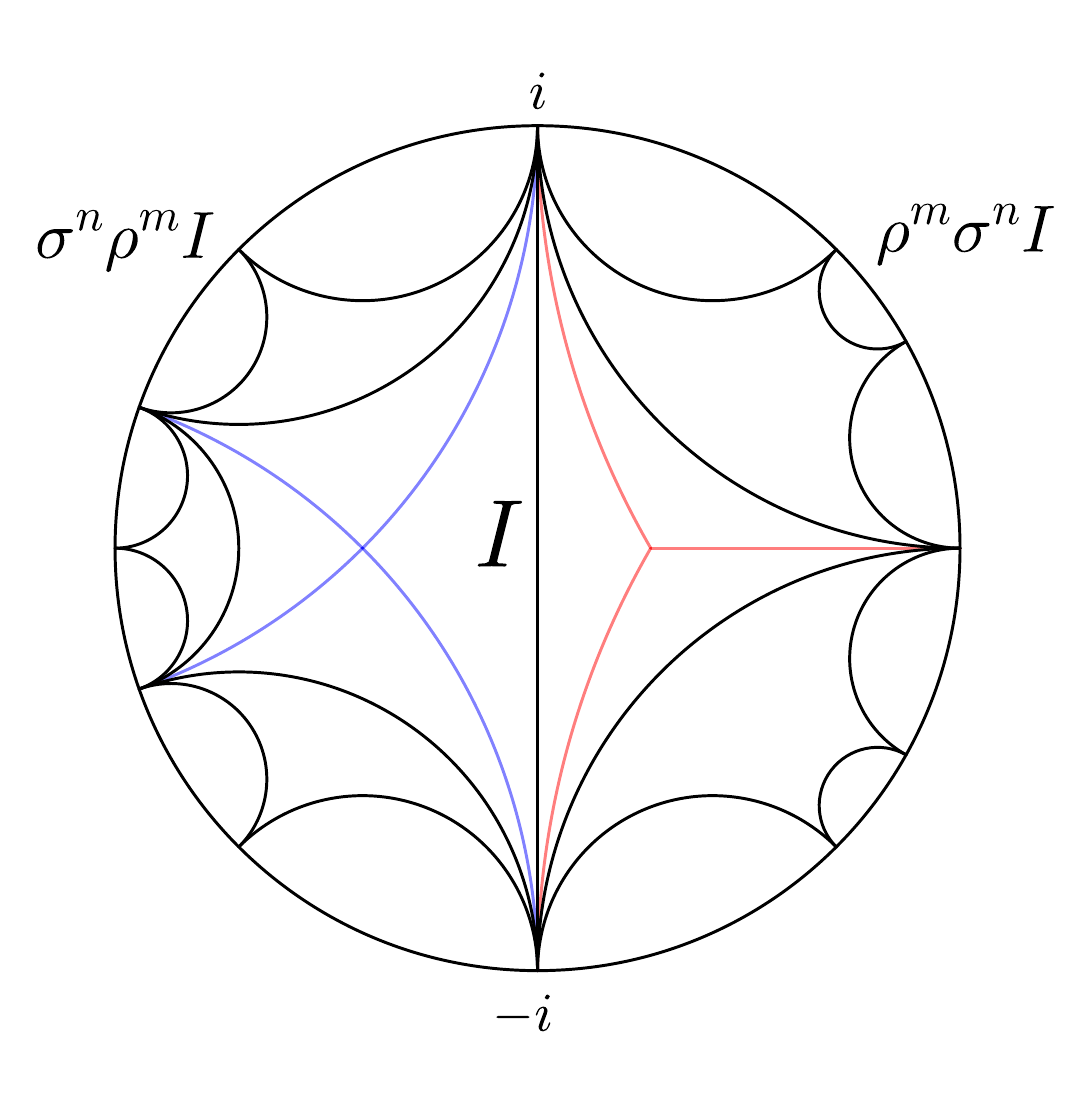}
    \caption{The geodesic $I$ and its images under different elements of $\Gamma_{p,q}$. The curves in blue and red are left from the tessellation of Figure \ref{fig:parabolic_fundamental_domains}}.
    \label{fig:group_pinched_polynomial_like}
\end{figure}

When proving Theorem \ref{main.parabolic_mating}, we will need to use different curves to remove certain cusps from the picture, but the main idea is still that this picture of the group is compatible with the pinched polynomial-like restrictions of the maps.

\subsection{Correspondences}\label{subsec.correspondences}

We now introduce the concepts that are the generalizations of `maps' and `groups' which we use to realize a mating. A more detailed treatment can be found in the article \cite{bullett-penrose-2001} by the first author and C. Penrose.

\begin{deft}

    An \textit{algebraic correspondence} $F$ on $\widehat{\C}$ is an algebraic curve
    \[ F = \{ (z, w) \in \widehat{\C}\times \widehat{\C} \ | \ P(z, w) = 0 \} \]
    where $P$ is a polynomial which has no factors involving just one of the variables, and no repeated factors involving both variables. The \textit{image} of a point $z \in \widehat{\C}$ is defined as the set of points:
    \[ F(z) := \{ w \in \widehat{\C} \ | \ (z, w) \in F \}. \]
    
\end{deft}

Correspondences are to be understood as multivalued maps. As such, the following are some of their properties:

\begin{itemize}
    \item If $F$ is a correspondence, there is an associated inverse correspondence
    \[ F^{-1} := \{ (z, w) \in \widehat{\C}\times \widehat{\C} \ | \ P(w, z) = 0 \}. \]
    We do not have $F\circ F^{-1} = Id$, but viewing the identity as a correspondence (it is just the diagonal), then $Id \subseteq F\circ F^{-1}$;

    \item A correspondence has a bidegree: for generic $z^\ast$ and $w^\ast$ in $\widehat{\C}$, the sets $F^{-1}(w^\ast)$ and $F(z^\ast)$ always have some integer number of elements, say $d_1$ and $d_2$, respectively, and this pair of numbers is the \textit{bidegree} of $F$. We then say that $F$ is a $d_1:d_2$ correspondence. Notice that, since $F$ has no repeated factors in both variables, $d_1$ and $d_2$ are the degrees of the polynomials $z\mapsto P(z, w^\ast)$ and $w\mapsto P(z^\ast, w)$ for generic choices of $z^\ast$ and $w^\ast$;

    \item Two correspondences can be composed: if $F, G$ are correspondences then
    \[ G\circ F := \{ (z, w) \in \widehat{\C}\times \widehat{\C} \ | \ \exists v \in \widehat{\C}, (z, v) \in F, (v, w) \in G \}. \]
    It is not immediately obvious that such a set is an algebraic curve, but one can show it is a complex submanifold and apply Chow's Theorem \cite{chow-1949}. In general, if $F$ has bidegree $(d_1, d_2)$ and $G$ has bidegree $(d_1', d_2')$, then $G\circ F$ has bidegree \textit{at most} $(d_1d_1', d_2d_2')$ (but not necessarily the same).

    \item The \textit{union} of two correspondences is also a correspondence. If $F$ and $G$ are correspondences defined by the polynomials $P(z, w)$ and $Q(z, w)$, respectively, and $P = P'R$ and $Q = Q'R$, with $P'$ and $Q'$ having no factor in common, then it is clear that $F\cup G$ is defined by $P'Q'R$ --- which will have no repeated factors. If $(d_1,d_2)$ is the bidegree of $F$, $(d_1',d_2')$ is the bidegree of $G$, and $(\hat{d_1},\hat{d_2})$ is the bidegree of $R$, then the bidegree of $F\cup G$ is $(d_1 + d_1' - \hat{d_1}, d_2 + d_2' - \hat{d_2})$. Furthermore, if $F\subset G$ and $F \neq G$, we can talk about the \textit{difference} of correspondences $G\setminus F$, of bidegree $(d_1' - d_1, d_2' - d_2)$.
\end{itemize}

See Section 2 of \cite{bullett-penrose-2001} for equivalent alternative definitions of a \textit{holomorphic correspondence} on the Riemann sphere, and details of the application of Chow's Theorem in this context. Of particular relevance to us are the deleted covering correspondences.

\begin{deft}

    Given a rational map $R$ on $\widehat{\C}$, we define its \textit{deleted covering correspondence} as
    \[ \Cov_0^R := \left\{ (z, w) \in \widehat{\C}\times \widehat{\C} \ \left| \ \frac{R(w) - R(z)}{w - z} = 0 \right.\right\}. \]
    
\end{deft}

The deleted covering correspondence simply moves a point $z$ to all \textit{other} points in the fiber of $R(z)$ (notice that $z \in \Cov_0^R(z)$ if and only if the local degree of $R$ at $z$ is greater than $1$). With the notation introduced above, we can see that $\Cov_0^R = R\circ R^{-1}\setminus Id$, and, if $d$ is the degree of $R$, $\Cov_0^R$ has bidegree $(d-1, d-1)$.

\begin{deft}\label{deft.parabolic_mating}

    Let $f \in \mathcal{P}_p$, $g \in \mathcal{P}_q$ be such that $g\circ f \in \Per_1^{pq}(1)$. We say a $pq:pq$ correspondence $F$ is a \textit{mating} between the pair $(f, g)$ and the group $\Gamma_{p,q}$ if there exists a decomposition $\widehat{\C} = \Lambda\cup \Omega$ into fully $F$-invariant sets such that:
    \begin{itemize}
        \item $\Lambda = \Lambda_-\cup \Lambda_+$ is the union of two compact sets, with $\Lambda_-\cap \Lambda_+$ consisting of a single point, such that the $pq:1$ restriction $F|_{\Lambda_-}: \Lambda_- \to \Lambda_-$ admits a pinched polynomial-like restriction hybrid conjugate to $g\circ f$, and the $pq:1$ restriction $F^{-1}|_{\Lambda_+}: \Lambda_+ \to \Lambda_+$ admits a pinched polynomial-like restriction hybrid conjugate to $f\circ g$;

        \item $\Omega$ is an open set, and the action of $F|_{\Omega}$ is conformally conjugate to the action of the elements $\sigma^n\rho^m$, $1\leq n\leq q$, $1\leq m\leq p$, of $\Gamma_{p,q}$ on the unit disk $\D$.
    \end{itemize}
    
\end{deft}

\begin{rmk}\label{rmk.definition_by_quotient}

    The second item in the previous definition implies that the quotient $\faktor{\Omega}{F}$ is biholomorphic to $\faktor{\D}{\Gamma_{p,q}}$. This serves as inspiration for the weaker definition of orbit mating (see Definition \ref{deft.hyperbolic_mating}).
    
\end{rmk}

\begin{rmk}\label{rmk.hecke}

    When defining the group $\Gamma_{p,q}$, one could allow $q = 1$ and $p \geq 2$ by making $\sigma(z) = -z$. These groups $\Gamma_{p,1}$ are conjugate to the Hecke groups $H_{p+1}$. In the given definition of mating, one would require $g$ to be a degree $1$ map, i.e. a M\"{o}bius transformation. Then clearly $g\circ f$ and $f\circ g$ are conjugate via $g$, meaning that the filled Julia sets are essentially the same. With that in mind, it is clear that a mating between the pair $(f, g)$ and the group $H_{p+1}$ in this sense recovers the definition of mating between $g\circ f$ and $H_{p+1}$ given in \cite{bullett-lomonaco-2019, bullett-lomonaco-2022, bullett-lomonaco-2024, bullett-lomonaco-lyubich-mukherjee-2024}. Furthermore, a mating between a map $f$ and $H_{d+1}$ in the sense of those articles is a mating between the pair $(f, Id)$ and $H_{d+1}$ in the present setting.
    
\end{rmk}


\section{Proof of Theorems \ref{main.parabolic_mating} and \ref{main.parabolic_cov}}\label{sec.proof_of_theorem}

Throughout this section, we always assume $f \in \mathcal{P}_p$, $g \in \mathcal{P}_q$, $g\circ f \in \Per_1^{pq}(1)$. The result is proved using the following steps:

\begin{itemize}
    \item We adapt the `pinched polynomial-like maps' we defined for $\Gamma_{p,q}$, to widen certain cusps in their domain boundaries into non-zero angles;
    \item We impose the same angle conditions on the boundaries of the domains of the pinched polynomial-like restrictions of the maps $g\circ f$ and $f\circ g$;
    \item We find the conditions necessary for a diffeomorphism of these domain boundaries to correctly glue together the actions of the group and the maps;
    \item Once we have defined the diffeomorphisms on boundaries, we interpolate quasiconformally to interiors, using estimates of Pommerenke and Warschawski;
    \item We construct a topological version of the correspondence;
    \item We find an invariant Beltrami form, and by applying the Measurable Riemann Mapping Theorem we straighten the topological mating into a conformal mating.
\end{itemize}

The first step is to open up certain cusps on boundaries in the group picture, to modify them to non-zero angles. Consider 
two smooth curves, $\gamma\subset D_\sigma$ and $\eta\subset D_\rho$, each joining $i$ to $-i$, and each making an angle $\theta\ne 0$ with the dividing geodesic $I$ at both extremities (see Figure \ref{fig:group_pinched_polynomial_like_angle}); we want this angle $\theta$ to be the same as the one the pinched polynomial-like restrictions of $g\circ f$ and $f\circ g$ make at $\infty$. Now define: $\gamma_n := \sigma^{-n}\gamma$ for $1\leq n\leq q$; $\eta_m := \rho^{-m}\eta$ for $1\leq m\leq p$; $\gamma_{n,m} := \rho^{-m}\sigma^{-n}\gamma$ for $1\leq n\leq q, 1\leq m\leq p$; and $\eta_{m,n} := \sigma^{-n}\rho^{-m}\eta$ for $1\leq m\leq p, 1\leq n\leq q$. Let us denote by $A$ the set between $\gamma$ and the union of the arcs $\gamma_{n,m}$. If we were to identify $i$ and $-i$, $A$ would become a pinched annulus, with inner boundary $\bigcup_{n,m} \gamma_{n,m}$ and outer boundary $\gamma$; we shall write $\partial_i A := \bigcup_{n,m} \gamma_{n,m}$ and $\partial_o A := \gamma$. Similarly, we define $B$ to be the set bounded by $\eta$ and the union of the arcs $\eta_{m,n}$, and we let $\partial_i B := \bigcup_{m,n} \eta_{m,n}$ and $\partial_o B := \eta$. Since $\sigma^n\rho^m$ maps $\gamma_{n,m}$ to $\gamma$, these group elements together make a $pq:1$ map from $\partial_i A$ to $\partial_o A$, just as $g\circ f$ is a $pq:1$ map from $\partial U'$ to $\partial U$. Similarly, since $\sigma^{-n}\rho^{-m}$ maps $\eta$ to $\eta_{m,n}$, these group elements together define a $1:pq$ correspondence from $\partial_o B$ to $\partial_i B$, just as $(f\circ g)^{-1}$ induces a $1:pq$ correspondence from $\partial V$ to $\partial V'$. Our objective now is to construct quasiconformal maps $\varphi: A \to U\setminus \overline{U'}$ and $\psi: B \to V\setminus \overline{V'}$ which are conjugacies between the maps from inner to outer boundaries defining our two dynamical systems:
\[ \begin{tikzcd}
\partial_iA \arrow[d, "\varphi"'] \arrow[rr, "{\{\sigma^n\rho^m \}_{n,m}}"] &  & \partial_oA \arrow[d, "\varphi"] &  & \partial_iB \arrow[d, "\psi"'] \arrow[rr, "{\{ \rho^m\sigma^n \}_{m,n}}"] &  & \partial_oB \arrow[d, "\psi"] \\
\partial U' \arrow[rr, "g\circ f"']                                                       &  & \partial U                       &  & \partial V' \arrow[rr, "f\circ g"']                                                     &  & \partial V                   
\end{tikzcd} \]
We now also set $\hat{U} := g(V') = f^{-1}(V) \subset U$, $\hat{V} := f(U') = g^{-1}(U) \subset V$, $\hat{A}$ the set bounded by $\gamma$ and the union of the $\gamma_n$, and $\hat{B}$ the set bounded by $\eta$ and the union of the $\eta_m$. Just as before, if we set $\partial_i\hat{A} := \bigcup_n \gamma_n$ and $\partial_i\hat{B} := \bigcup_m \eta_m$, we find: a $p:1$ map from $\partial_iA$ to $\partial_i\hat{B}$ defined by the iterates of $\rho$, like the $p:1$ map $f:\partial U' \to \partial\hat{V}$; a $q:1$ map from $\partial_i\hat{B}$ to $\gamma$ defined by the iterates of $\sigma$, like the $q:1$ map $g:\partial\hat{V} \to \partial U$; a $q:1$ map from $\partial_i B$ to $\partial_i\hat{A}$ defined by the iterates of $\sigma$, like the $q:1$ map $g:\partial V' \to \partial\hat{U}$; and a $p:1$ map from $\partial_i\hat{A}$ to $\eta$ defined by the iterates of $\rho$, like the $p:1$ map $f:\partial\hat{U} \to \partial V$. See Figure \ref{fig:group_pinched_polynomial_like_angle}. It will be important for our construction that the maps $\varphi, \psi$ also conjugate dynamical maps between inner and outer boundaries, meaning that we want the following diagrams to commute:
\begin{equation}\label{eq.commutative_diagrams} \begin{tikzcd}
\partial_i A \arrow[r, "\{\rho^m\}_m"] \arrow[d, "\varphi"'] & \partial_i\hat{B} \arrow[d, "\psi"] \arrow[r, "\{\sigma^n\}_n"] & \gamma \arrow[d, "\varphi"] & \partial_i B \arrow[d, "\psi"'] \arrow[r, "\{\sigma^n\}_n"] & \partial_i \hat{A} \arrow[r, "\{\rho^m\}_m"] \arrow[d, "\varphi"] & \eta \arrow[d, "\psi"] \\
\partial U' \arrow[r, "f"']                                  & \partial\hat{V} \arrow[r, "g"']                                       & \partial U                  & \partial V' \arrow[r, "g"']                                       & \partial \hat{U} \arrow[r, "f"']                                  & \partial V            
\end{tikzcd} \end{equation}
Note that this will imply that the two previous diagrams commute.

\begin{figure}
    \centering
    \includegraphics[width=0.35\linewidth]{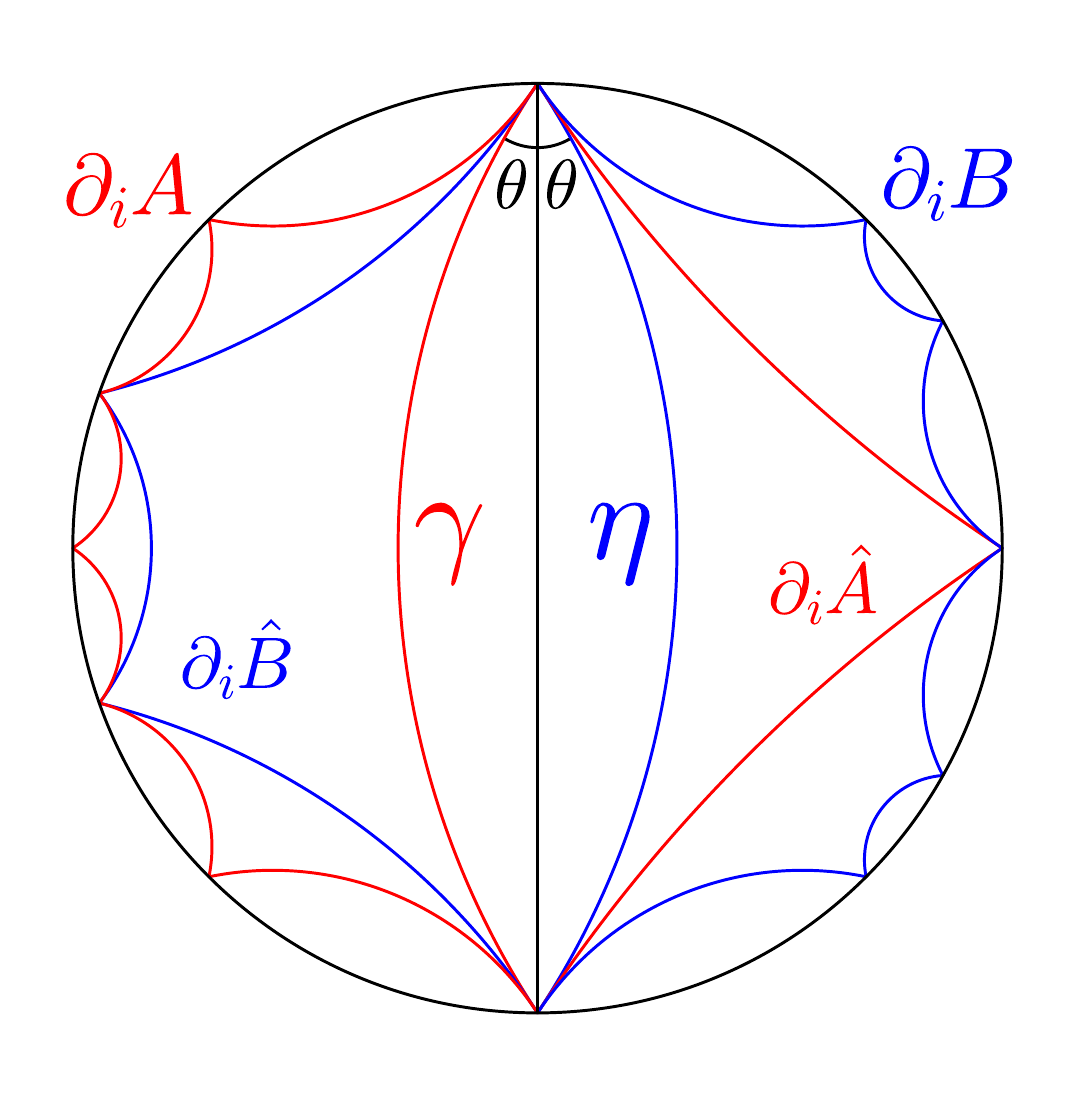}
    \caption{The adapted picture of the group $\Gamma_{p,q}$.}
    \label{fig:group_pinched_polynomial_like_angle}
\end{figure}

\begin{lema}\label{lema.diagram_lift}

    If $\varphi: \gamma \to \partial U$ and $\psi: \eta \to \partial V$ are any chosen diffeomorphisms mapping $\pm i$ to $\infty$, then one can extend $\varphi: \partial_iA\cup \partial_i\hat{A} \to \partial U'\cup \partial\hat{U}$ and $\psi: \partial_iB\cup \partial_i\hat{B} \to \partial V'\cup \partial\hat{V}$ so that the diagrams \ref{eq.commutative_diagrams} commute, and these extensions are diffeomorphisms on each smooth segment of these boundaries.
    
\end{lema}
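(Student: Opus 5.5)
We will build the extensions by lifting through the maps $f$ and $g$ and the rotations $\rho$ and $\sigma$, one step at a time. The key point is that each arrow in diagrams~\eqref{eq.commutative_diagrams} is a homeomorphism from each smooth segment of its source onto its target curve. For $\gamma_n = \sigma^{-n}\gamma$, the rotation $\sigma^n$ maps $\gamma_n$ diffeomorphically onto $\gamma$. On the map side, $g$ restricted to $\partial\hat V = g^{-1}(\partial U)$ is a $q:1$ covering of $\partial U$. This covering is unbranched away from the preimages of $\infty$, because $\partial U$ avoids the critical values: $U\setminus\overline{U'}$ contains no critical values of $g\circ f$, hence no critical values of $g$ on $\partial U$, after shrinking $U$ if necessary. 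So $\partial\hat V$ splits into $q$ smooth arcs, each joining two preimages of $\infty$ and each mapped diffeomorphically onto $\partial U$ by $g$.

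The first step is to match these arcs combinatorially. Order the $q$ arcs $\eta_m$ of $\partial_i\hat B$ and the $q$ arcs of $\partial\hat V$ cyclically, in their order along the pinched polygons. We want a bijection respecting the cyclic order and the pinch points, with $\pm i$ and their images corresponding to $\infty$ and its preimages. Local degree and orientation force such a bijection. At each pinch point exactly two arcs meet, and near $\infty$ the map $g$ is tangent to the identity, so the arc through $\infty$ itself is the distinguished one. The same holds on the group side for the arcs ending at $\pm i$. This fixes a unique orientation-preserving correspondence of arcs.

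With the arcs matched, define $\psi$ on the arc $\eta_m\subset\partial_i\hat B$ by
\[ \psi := (g|_{\alpha_m})^{-1}\circ\varphi\circ\sigma^{n(m)}, \]
where $\alpha_m$ is the matched arc of $\partial\hat V$ and $\sigma^{n(m)}$ is the rotation carrying that arc onto $\gamma$. This composition is a diffeomorphism of the open arc. It extends continuously to the endpoints, which are sent to the corresponding preimages of $\infty$. Commutativity of the right square of the first diagram then holds by construction.

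Next repeat the procedure one level down. Define $\varphi$ on $\partial_iA$ by lifting $\psi|_{\partial_i\hat B}$ through $f:\partial U'\to\partial\hat V$ and $\rho^m:\partial_iA\to\partial_i\hat B$. This works because the segments of $\partial U'$ are exactly the $f$-lifts of the segments of $\partial\hat V$, and they are likewise unbranched. Symmetrically, define $\varphi$ on $\partial_i\hat A$ by lifting $\psi|_\eta$ through $f$ and $\rho$, and then $\psi$ on $\partial_iB$ by lifting through $g$ and $\sigma$. Each of these lifts is again a composition of diffeomorphisms on each segment, so all the required squares commute.

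The main obstacle is the combinatorial matching with consistent behaviour at the pinch points. The images of all the segments must be the correct ones, and the lifted boundary map must be well defined and continuous at the pinch points where two segments meet. Both segments must send the pinch point to the same preimage of $\infty$, and adjacent segments on the group side must go to adjacent segments on the map side. To handle this we would use the cyclic-order argument above: $f$ and $g$ act as orientation-preserving branched coverings of the pinched polygons, so they are simply local homeomorphisms at the pinch points in the boundary sense. The same holds for the group elements. Finally, we must check that each chosen lift agrees on shared endpoints, which follows because both lifts map the endpoint to the unique preimage of $\infty$ lying at that corner.
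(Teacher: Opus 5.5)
Your proposal is correct and follows the paper's route. The paper also lifts the chosen diffeomorphisms on $\gamma$ and $\eta$ level by level through the squares of \eqref{eq.commutative_diagrams}, using the degree compatibility of $f,g$ with the powers of $\rho,\sigma$; your unbranchedness check and cyclic matching of arcs at the pinch points make explicit what the paper leaves implicit, and, like the paper, they tacitly assume the preimages of $\infty$ are simple, so that exactly two arcs meet at each pinch point.

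One labelling slip, inherited from the paper's definitions of $\hat A$ and $\hat B$: the $q$ arcs of the intermediate curve carried onto $\gamma$ by powers of $\sigma$ are the $\gamma_n=\sigma^{-n}\gamma$, not the $\eta_m=\rho^{-m}\eta$, of which there are $p$ and which are carried onto $\eta$ by powers of $\rho$.
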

\begin{proof}

    This follows at once from the degree compatibility of the horizontal maps in each square: we use the diagrams we wish to commute to lift the initial diffeomorphisms $\varphi$ and $\psi$ to each level. The lifts will be diffeomorphisms on the smooth segments since the maps $f, g, \rho^m, \sigma^n$ are all smooth on those segments.
    
\end{proof}

We now wish to extend the diffeomorphisms that we have just constructed on boundaries, to quasiconformal maps $\varphi: A\to U\setminus \overline{U'}$ and $\psi: B\to V\setminus \overline{V'}$. The argument applies the same methods as the first proof of Lemma 5.2 in \cite{bullett-lomonaco-lyubich-mukherjee-2024}, and, earlier, in \cite{lee-lyubich-makarov-mukherjee-2021}. The main difference here is that we do not work with external classes, as we want to preserve the nature of $g\circ f$ and $f\circ g$ as compositions of maps.

\begin{lema}\label{lema.warschawski}

    There exist choices of homeomorphisms $\varphi: \gamma \to \partial U$ and $\psi: \eta \to \partial V$ that admit quasiconformal extensions $\varphi: A\to U\setminus \overline{U'}$ and $\psi: B\to V\setminus \overline{V'}$, and for which the diagrams (\ref{eq.commutative_diagrams}) commute.
    
\end{lema}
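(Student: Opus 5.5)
The plan is to build $\varphi$ and $\psi$ in two regimes: a neighbourhood of the pinch points, where everything is parabolic and is controlled by Fatou coordinates, and the rest of $A$ and $B$, which is a compact piecewise smooth region where quasiconformal interpolation is routine. Lemma~\ref{lema.diagram_lift} shows that the commutativity of (\ref{eq.commutative_diagrams}) is automatic once $\varphi|_\gamma$ and $\psi|_\eta$ are chosen. So the whole task is to choose these two initial boundary maps so that their lifts admit quasiconformal extensions. The danger is that an arbitrary diffeomorphism $\gamma\to\partial U$ may have uncontrolled derivatives near $\pm i$. Lifting through the dynamics would then compound these distortions at the infinitely many scales accumulating at the cusps, and the extension could fail to be quasiconformal.

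\textbf{Step 1: the choice near the cusps.} Near $i$ and $-i$, the elements $\sigma\rho$ and $\rho\sigma$ are parabolic, and $\rho,\sigma$ permute the cusps of the tessellation. In a horocyclic coordinate $\zeta=-1/(z\mp i)$, suitably normalized, the relevant group elements become translations. On the map side, $f$ and $g$ are both of the form $z+1+\mathcal{O}(1/z)$ near $\infty$ by Proposition~\ref{prop.alignment}. Using attracting and repelling Fatou coordinates (or just the coordinate $z$ itself, in which $f$ and $g$ are asymptotically unit translations), I will define $\varphi|_\gamma$ and $\psi|_\eta$ near the endpoints to be asymptotically affine in these coordinates. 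The requirement is that each germ of $\gamma$ at $\pm i$ goes to the corresponding germ of $\partial U$ at $\infty$, with both germs making angle $\theta$ with the translation direction, by a map of the form $\zeta\mapsto c\zeta+o(|\zeta|)$ whose derivative tends to a constant. The lifts produced by Lemma~\ref{lema.diagram_lift} on $\partial_iA$, $\partial_i\hat{A}$, and so on are compositions of these germs with branches of $f^{-1}$, $g^{-1}$, $\rho^{\pm m}$, $\sigma^{\pm n}$. These branches are conformal near the corresponding corners and asymptotically translations in the cusp coordinates. Hence the lifted boundary maps are also asymptotically affine near every pinch point and corner. The equal-angle choice of $\theta$ for $\gamma,\eta$ and for $\partial U,\partial V$ is exactly what makes the corner angles of $A$ and $U\setminus\overline{U'}$ agree, and likewise for $B$ and $V\setminus\overline{V'}$.

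\textbf{Step 2: local extension near the pinch points.} In the cusp coordinates, a neighbourhood of $\pm i$ in $A$ becomes a sector-like region, or a half-strip after taking logarithms, bounded by curves that are asymptotically straight with angle $\theta$. The same holds for the corresponding region of $U\setminus\overline{U'}$ near $\infty$. Following the first proof of Lemma 5.2 in \cite{bullett-lomonaco-lyubich-mukherjee-2024}, I will uniformize each such piece by a Riemann map. The Warschawski/Pommerenke estimates for domains with asymptotically straight boundaries give that this Riemann map has derivative asymptotic to a constant in the strip model. The boundary correspondence, read in the strip models, is then a bi-Lipschitz map of the boundary lines that is asymptotically a translation. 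A map of this kind extends to a bi-Lipschitz, hence quasiconformal, map of the half-strips, for example by linear interpolation in the vertical variable. At the finitely many other corners, which are the preimages of $\pm i$, the pieces are images of the cusp pieces under conformal local branches, so the same extension transfers there.

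\textbf{Step 3: the compact part.} Away from small neighbourhoods of the pinch points, $A$ and $U\setminus\overline{U'}$ are finite unions of topological disks with piecewise smooth boundaries and matching corner angles. The boundary maps are piecewise diffeomorphisms that agree with the local extensions on the cross-cuts. A standard interpolation then yields a quasiconformal extension on each disk: for instance, pass to the disk by Riemann maps, which are bi-Lipschitz away from the corners by Kellogg–Warschawski, and apply Beurling–Ahlfors or a direct smooth interpolation. Gluing gives $\varphi$, and the same argument gives $\psi$. I expect the main obstacle to be Step 2: verifying that the Warschawski-type asymptotics hold uniformly along the $pq$ pinched corners simultaneously, and that the lifted boundary maps remain uniformly bi-Lipschitz in the strip coordinates. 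I would try first to reduce every corner to the two basic cusps at $\pm i$ and $\infty$ via the conformal branches in the diagrams, so that only two genuinely parabolic estimates are needed.
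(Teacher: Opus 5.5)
Your overall plan is the right one: lift with Lemma~\ref{lema.diagram_lift}, make the germs of $\varphi|_\gamma$ and $\psi|_\eta$ at $\pm i$ asymptotically affine in cusp coordinates, interpolate across parabolic half-strips, and handle the rest routinely. Your Step~1 is in fact preferable to the paper's choice. The paper takes $\varphi|_\gamma$ to be the boundary values of a conformal map $\D\setminus X\to\widehat{\C}\setminus U$ and records a $(1-z)^{1/2}$ behaviour at the corner. Since $z\mapsto -z^2$ is unbranched at $\pm i$, this reads as $\zeta\mapsto c\,\zeta^{1/2}$ in horocyclic coordinates. Edge maps of that form admit no quasiconformal extension across a parabolic half-strip; a conformal choice gives affine germs only when the two corner angles agree.

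There is, however, a genuine gap: you never deal with the intermediate curves inside the cusps.
\begin{itemize}
\item The diagrams (\ref{eq.commutative_diagrams}) prescribe $\varphi=f^{-1}\circ\psi\circ\rho^{m}$ on the curves $\rho^{-m}\eta\subset A$, and $\psi=g^{-1}\circ\varphi\circ\sigma^{n}$ on the curves $\sigma^{-n}\gamma\subset B$. So, as in the paper, the extension must be built tile by tile.
\item Two of these curves run into the cusps. The one among $\rho^{\pm1}\eta$ that ends at $i$ carries the end of $\eta$ at $-i$; the other ends at $-i$. The same holds for $\sigma^{\pm1}\gamma$ in $B$.
\item Each cusp of $A$ is therefore cut into two half-strips. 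At $i$, say, each of them has one edge whose boundary map comes from the germ of $\varphi|_\gamma$ at $i$, and one whose boundary map comes from the germ of $\psi|_\eta$ at $-i$.
\item Normalize the horocyclic coordinates at $\pm i$ so that $\rho^{\pm1}$ and $\sigma^{\pm1}$, which exchange the cusps, are translations. Use the coordinate of Proposition~\ref{prop.alignment}, in which $f^{\pm1},g^{\pm1}$ are $z\mapsto z\pm1+O(1/z)$. Then the two edge maps have the asymptotic stretching factors of your germs, $c_\varphi(i)$ and $c_\psi(-i)$.
\item If these factors differ, the edge maps drift apart linearly. In the strip model, the image of $[n,n+1]\times[0,w]$ has its horizontal sides displaced by a distance of order $|c_\varphi(i)-c_\psi(-i)|\,n$, inside a half-strip of bounded width. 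Their extremal distance therefore grows linearly in $n$, and no quasiconformal extension exists.
\end{itemize}
So your Step~2 claim, that the strip-model edge maps are bi-Lipschitz and at bounded distance, requires
\[
c_\varphi(i)=c_\psi(-i) \quad\text{and}\quad c_\varphi(-i)=c_\psi(i).
\]
Step~1 must impose these conditions. This costs nothing, since the germs are yours to choose, but it is not automatic. The paper's cusp step also asserts bounded distance without arranging it.

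A smaller point: not every remaining pinch point is a conformal image of a cusp.
\begin{itemize}
\item At a pinch point of $\partial_iA$ met by no curve of $\partial_i\hat A$, the relevant $\sigma^n\rho^m$ carries the adjacent piece of $A$ to a neighbourhood of $\pm i$ lying beyond $\gamma$, outside $A$. That piece is a genuine corner of positive angle, matched with a sector at a pole of $g\circ f$.
\item Likewise, the tile between $\gamma$ and $\partial_i\hat A$ has genuine corners where consecutive curves $\rho^{-m}\eta$ meet.
\end{itemize}
So ``the same extension transfers there'' is not the right justification. These corners are in fact easier than cusps: your Step~1 already makes the edge maps asymptotically linear in local coordinates, so a radial--angular interpolation is bi-Lipschitz. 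No equality of angles is needed, either there or at $\pm i$, where both sides are zero-angle cusps whatever $\theta$ is.
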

\begin{proof}

    Let us define the half-disks $\D_\ell := \{ x + iy \in \D \ | \ x \leq 0 \}$ and $\D_r := \{ x + iy \in \D \ | \ x \geq 0 \}$; notice that $A \subset \D_\ell$ and $B \subset \D_r$. The map $z \mapsto -z^2$ maps both half-disks to the whole of $\D$, sending $\pm i$ to $1$. Let $X$ be the monogon bounded by $\partial\D$ and the image of $\partial_oA = \gamma$, and $X'$ be the pinched polygon bounded by $\partial\D$ and the image of $\partial_iA$. The group elements $\sigma^n\rho^m$, $1\leq m\leq p, 1\leq n\leq q$, then induce a pinched polynomial-like map $\mathcal{E}_1: X' \to X$ (indeed, this is a Farey-like map as defined in \cite{bullett-lomonaco-lyubich-mukherjee-2024}); see Figure \ref{fig:farey_like}. Similarly, if $Y$ is the monogon bounded by $\partial\D$ and the image of $\partial_oB = \eta$, and $Y'$ is the pinched polygon bounded by $\partial\D$ and the image of $\partial_iB$, then the group elements $\rho^m\sigma^n$, $1\leq m\leq p, 1\leq n\leq q$, induce a pinched polynomial-like map $\mathcal{E}_2: Y' \to Y$. Notice that the map $z \mapsto -z^2$ is a conformal map between $A$ and $X\setminus \overline{X'}$, and also between $B$ and $Y\setminus \overline{Y'}$, and therefore it is enough to construct the quasiconformal maps $\hat{\varphi}: X\setminus \overline{X'} \to U\setminus \overline{U'}$ and $\hat{\psi}: Y\setminus \overline{Y'} \to V\setminus \overline{V'}$. To simplify notation, we shall still denote the closure of $\partial X\setminus \s^1$ by $\partial X$, and similarly for $X', Y, Y'$.

    \begin{figure}
        \centering
        \includegraphics[width=0.6\linewidth]{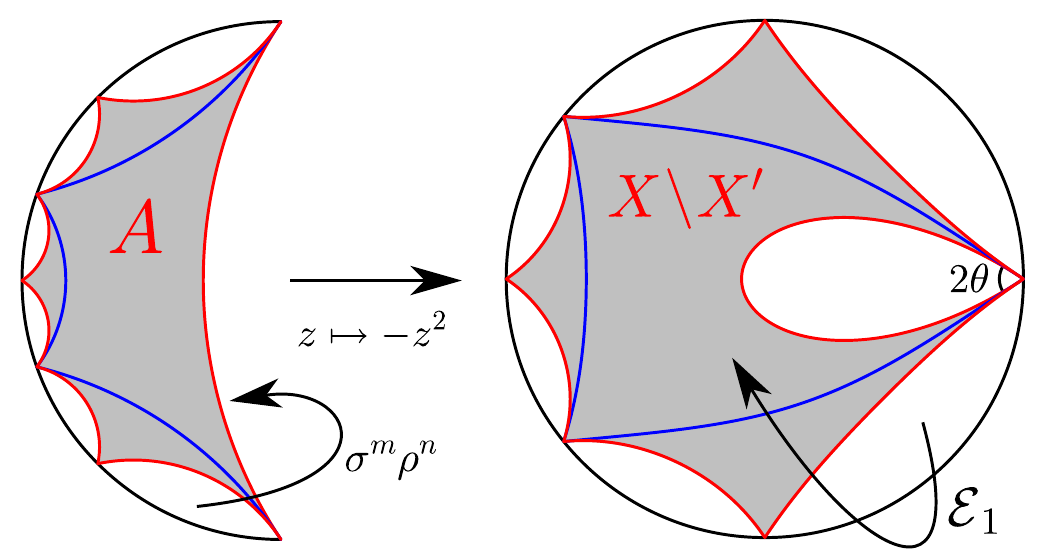}
        \caption{The Farey-like map $\mathcal{E}_1: X' \to X$ induced from the action of the elements $\sigma^m\rho^n$ on the left side of the unit disk. The gray regions show the sets $A$ and $X\setminus X'$. Notice that the angle outside of $X$ is $2\theta$.}
        \label{fig:farey_like}
    \end{figure}
    
    We begin by observing that $\partial X$ and $\partial Y$ make an angle of $2\theta$ at $1$. Recall that $\partial U$ and $\partial V$ make angles of $\theta$ at $\infty$ --- in fact, we could take these sets to be composed of straight lines in a neighborhood of infinity. Then, if we set $\varphi: \partial X \to \partial U$ to be the restriction of a conformal map between $\D\setminus X$ and $\widehat{\C}\setminus U$ mapping $1$ to $\infty$, and $\psi: \partial Y \to \partial V$ to be the restriction of a conformal map between $\D\setminus Y$ and $\widehat{\C}\setminus V$ mapping $1$ to $\infty$, Theorem 3.11 of \cite{pommerenke-1992} tells us that $\varphi$ and $\psi$ behave like
    \[ z \mapsto 1 + b(1 - z)^{1/2} + o((1 - z)^{1/2}) \]
    near $1$, for some $b \in \C\setminus \{0\}$ (that will not usually be the same for $\varphi$ and $\psi$) and some appropriate branch of the square root.

    We may also define $\hat{X} \subset X$ and $\hat{Y} \subset Y$ via the intermediary boundaries $\partial_i\hat{A}$ and $\partial_i\hat{B}$, and there are induced maps $\hat{X} \to Y$ and $\hat{Y} \to X$ obtained from the actions of $\rho^m$, $1\leq m\leq p$, and $\sigma^n$, $1\leq n\leq q$, respectively. Still using the same boundary notation as we do for $X,X',Y,Y'$, Lemma \ref{lema.diagram_lift} tells us that our chosen $\varphi$ and $\psi$ extend to maps $\varphi: \partial X'\cup \partial \hat{X} \to \partial U'\cup \partial \hat{U}$ and $\psi: \partial Y'\cup \partial \hat{Y} \to \partial V'\cup \partial \hat{V}$ such that the following diagrams commute:
    
    \[ \begin{tikzcd}
    \partial X' \arrow[r, "\{\rho^m\}_m"] \arrow[d, "\varphi"'] & \partial \hat{Y} \arrow[d, "\psi"] \arrow[r, "\{\sigma^n\}_n"] & \partial X \arrow[d, "\varphi"] & \partial Y \arrow[d, "\psi"'] \arrow[r, "\{\sigma^n\}_n"] & \partial \hat{X} \arrow[r, "\{\rho^m\}_m"] \arrow[d, "\varphi"] & \partial Y \arrow[d, "\psi"] \\
    \partial U' \arrow[r, "f"']                                  & \partial\hat{V} \arrow[r, "g"']                                       & \partial U                  & \partial V' \arrow[r, "g"']                                       & \partial \hat{U} \arrow[r, "f"']                                  & \partial V            
    \end{tikzcd} \]
    
    We will show that $\varphi$ admits a quasiconformal extension to $X\setminus \overline{X'}$, and the proof for $\psi$ is analogous. The curve $\partial \hat{X}$ cuts the pinched annulus $X\setminus \overline{X'}$ into $p + 1$ tiles. Notice that the set $X\setminus \overline{\hat{X}}$ is one such tile, and all other $p$ tiles share their boundaries with it. Since the boundaries of all of these tiles are piece-wise smooth, classic quasiconformal extension arguments such as the Ahlfors-Beurling extension suffice to extend the map $\varphi$ away from the cusps of the tiles. Thus, we only need to show that a quasiconformal extension is possible at each cusp. We will focus on one of the cusps of $X\setminus \overline{\hat{X}}$ for simplicity, as the proofs for all others follow in the same way, in each case using the fact that the cusps arise from parabolic fixed points of $f$ and $g$, so we can use pre-Fatou coordinates.

    We begin by noting that $\partial V$ can be taken, near $\infty$, to be $\partial U + C_1$ for some $C_1 > 0$ not equal to $1$. With that
    assumption, one finds that $\partial U$ is the image of $\partial \hat{V}$ under the composition of the maps $z \mapsto g(z) = z + 1 + \mathcal{O}(1/z)$ and $z \mapsto z - C_1$, and therefore the cusps of $U\setminus \overline{\hat{U}}$ resemble half-strips of width $1 - C_1$. Let us work with just one of these strips, call it $S_1$; we can uniformize it to the half-strip $S := \{ x + iy \in \C \ | \ x > 0, -\pi/2 < y < \pi/2 \}$ via a map $k_1: S_1 \to S$. A result of Warschawski \cite{warschawski-1942} then tells us that $k_1(z) = c_1z + o(z)$ as $z$ goes to $\infty$. Similarly, when considering the associated cusp of $X\setminus \overline{\hat{X}}$, we may conformally map it to a strip $S_2$ via a map $j$ by first returning to the original disk (applying $z \mapsto z^{1/2}$) and then using coordinates where the group element that fixes the cusp (either $\sigma\rho$ or $\rho\sigma$) is the map $z \mapsto z + 1$. Thus, Warschawski again tells us that the uniformization map $k_2: S_2 \to S$ has the form $k_2(z) = c_2z + o(z)$ as $z$ goes to $\infty$. See Figure \ref{fig:warschawski}. We then see that the map
    \[ k_1\circ \varphi\circ j\circ k_2^{-1}: \partial S \to \partial S \]
    is of the form $z \mapsto \lambda z + o(z)$ as the real part of $z$ goes to $\infty$, and the maps on the upper and lower boundaries of $S$ are at a bounded distance from each other. This means that the linear extension of this map to $S$ will be quasiconformal, and thus induce a quasiconformal extension of $\varphi$ on the cusp.
    
\end{proof}

\begin{figure}
    \centering
    \includegraphics[width=0.8\linewidth]{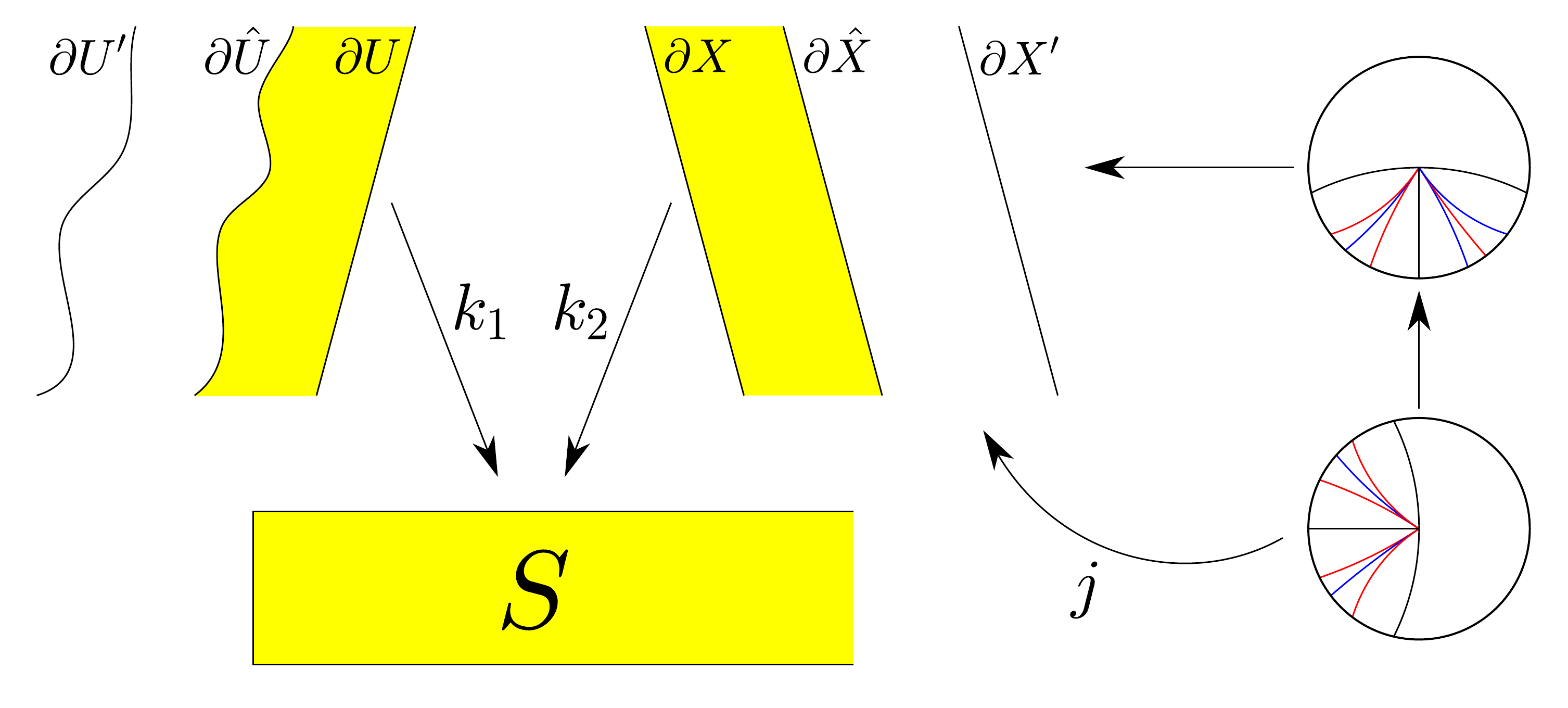}
    \caption{The maps presented in the proof of Lemma \ref{lema.warschawski}. The key for this result is the control of the asymptotics of the maps $k_1$ and $k_2$.}
    \label{fig:warschawski}
\end{figure}

We are finally ready to prove Theorem \ref{main.parabolic_mating}.

\begin{proof}[Proof of Theorem \ref{main.parabolic_mating}]

    Let $\varphi: A\to U\setminus \overline{U'}$ and $\psi: B\to V\setminus \overline{V'}$ be the quasiconformal maps given by Lemma \ref{lema.warschawski}. We begin by constructing the mating at a topological level: let $S$ be the sphere $(U\cup \D\cup V)/\sim$, where the relation $\sim$ identifies any $z \in A$ with $\varphi(z) \in U$ and any $z \in B$ with $\psi(z) \in V$. We then define a correspondence $G$ on $S$ by setting:
    \begin{itemize}
        \item $G: U' \to U$ a $pq:1$ map given by $g\circ f$;
        \item $G: V \to V'$ a $1:pq$ correspondence given by $(f\circ g)^{-1}$;
        \item $G: U' \to \hat{V}$ a $p(q-1):(q-1)$ correspondence given by composing the $p:1$ map $f: U' \to \hat{V}$ with the $q-1:q-1$ correspondence $\Cov_0^g: \hat{V} \to \hat{V}$;
        \item $G: \hat{U} \to V'$ a $(p-1):(p-1)q$ correspondence given by composing the $p-1:p-1$ correspondence $\Cov_0^f: \hat{U} \to \hat{U}$ with the $1:q$ correspondence $g^{-1}: \hat{U} \to V'$;
        \item $G: \hat{U}\setminus \overline{U'} \to \bigcup_n \sigma^n(V)$ a $1:q$ correspondence that maps a point $z$ to $\sigma^n\rho^j(z)$, $1\leq n\leq q$, where $1\leq j\leq p$ is chosen so that $\rho^j(z) \in V$ (equivalently, $\rho^j(z) \in \Delta_\rho$);
        \item $G: \rho^m(U) \to \hat{V}\setminus \overline{U'}$, $1\leq m\leq p$, a $1:1$ correspondence that maps a point $z$ to the $w$ such that $\rho^m\sigma^\ell(w) = z$, where $1\leq \ell\leq q$ is chosen so that $\sigma^\ell(w) \in U$ (equivalently, $\sigma^\ell(w) \in \Delta_\sigma$);
        \item $G$ acts as the group elements $\sigma^n\rho^m$, $1\leq n\leq q$, $1\leq m\leq p$, on the rest of $S$.
    \end{itemize}
    Our construction of $\varphi$ and $\psi$ guarantees that all of these correspondences glue together continuously on boundaries.

    Notice that $K(g\circ f)$ and $K(f\circ g)$ now have the parabolic point at $\infty$ identified in the sphere $S$. The correspondence $G$ induces a decomposition of $S$ into fully invariant sets $\Lambda' := K(g\circ f)\cup K(f\circ g)$ and $\Omega' := \widehat{\C}\setminus \Lambda'$. The action of $G$ on $\Omega'$ is conformally conjugate to that of the group $\Gamma_{p,q}$. Also, $G$ admits pinched polynomial-like restrictions of the correspondences $G|_{K(g\circ f)}$ and $G^{-1}|_{K(f\circ g)}$ that are hybrid equivalent to the pinched polynomial-like restrictions $(g\circ f, U', U)$ and $(f\circ g, V', V)$.

    Let us now define a Beltrami coefficient $\mu$ on the sphere $S$ by:
    \begin{itemize}
        \item $\mu_0$ on $\D$, which can then be viewed as $\varphi^\ast\mu_0$ on $U\setminus \overline{U'}$, and $\psi^\ast\mu_0$ on $V\setminus\overline{V'}$;

        \item $((g\circ f)^n)^\ast\varphi^\ast\mu_0$ on $z$ if $(g\circ f)^n(z) \in U\setminus \overline{U'}$;

        \item $((f\circ g)^n)^\ast\psi^\ast\mu_0$ on $z$ if $(f\circ g)^n(z) \in V\setminus\overline{V'}$;

        \item $\mu_0$ on $K(g\circ f)\cup K(f\circ g)$
    \end{itemize}
    Clearly, $\mu$ is $G$-invariant, and therefore we can find a quasiconformal map $\phi$ integrating $\mu$ such that $F = \phi\circ G\circ \phi^{-1}$ is a holomorphic correspondence. It follow that $F$ is an \textit{algebraic} correspondence, by Chow's Theorem. We see that $F$ is a mating, with $\Lambda_- = \phi(K(g\circ f))$, $\Lambda_+ = \phi(K(f\circ g))$, $\Lambda = \Lambda_-\cup \Lambda_+$, and $\Omega = \widehat{\C}\setminus \Lambda$. Indeed, since $\mu = \mu_0$ on $\D$, the action of $F$ on $\Omega$ is conformally conjugate to that of $G$ on $\Omega'$. Also, $\phi|_{\Lambda}$ has dilatation $0$, and so the pinched polynomial-like restrictions of $F$ induced from $G$ around $\Lambda_-$ and $\Lambda_+$ are hybrid equivalent to the pinched polynomial-like restrictions of $G$ around $\Lambda_-'$ and $\Lambda_+'$, which implies they are hybrid equivalent to the pinched polynomial-like restrictions $(g\circ f, U', U)$ and $(f\circ g, V', V)$.
    
\end{proof}

\begin{figure}
    \centering
    \includegraphics[width=0.45\linewidth]{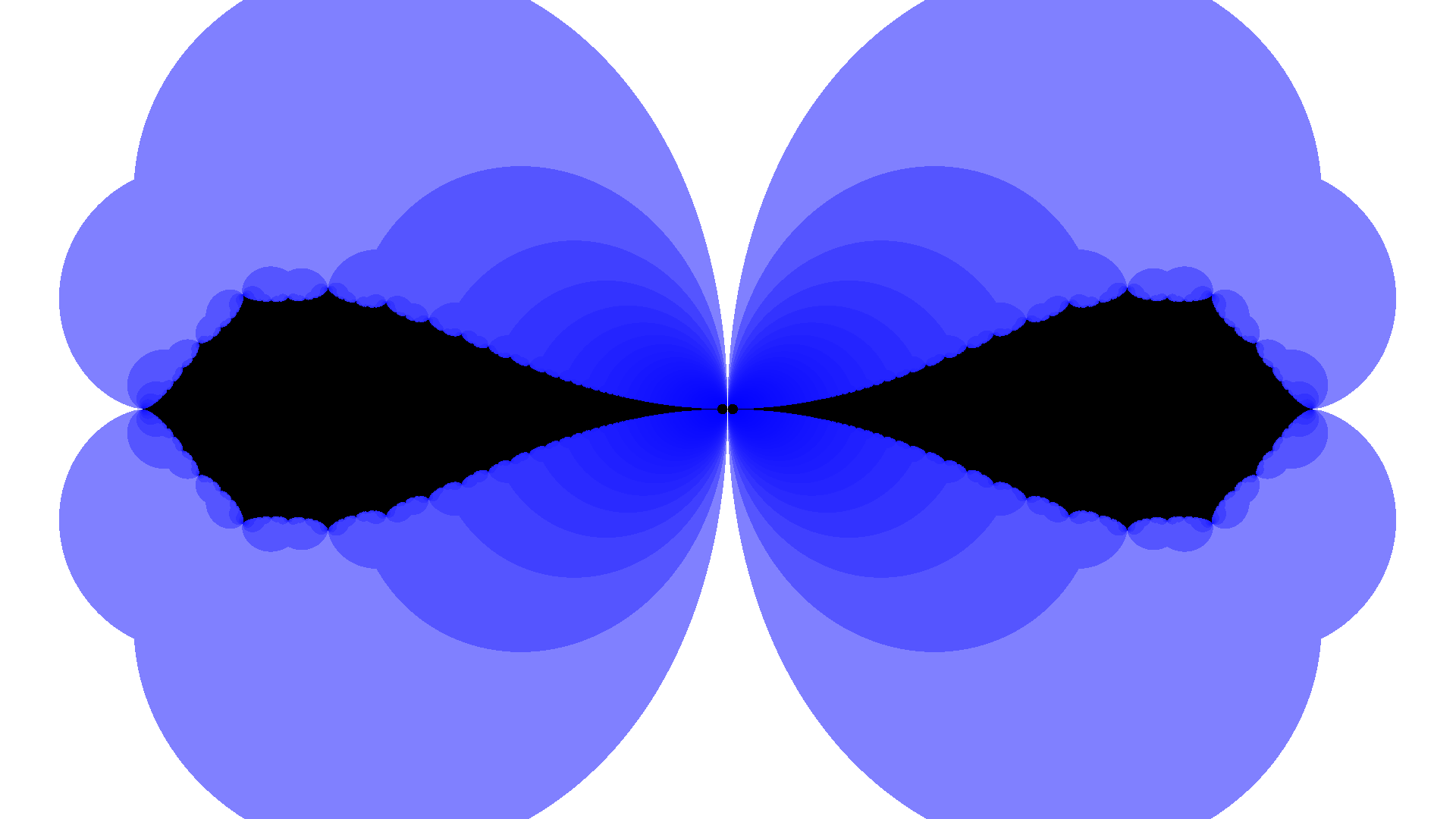}
    \includegraphics[width=0.45\linewidth]{images/c3c3_c=1.111.png}
    \caption{Two examples of matings between pairs of quadratic rational maps and the group $\Gamma_{2,2}$. On the left, $f(z) = g(z) = z^2/(z + 1/2)$, making a symmetric mating. On the right, $f(z) = z^2/(z + 1/2)$ and $g(z) = (2z^2 - z - 1/2)/(2z + 1)$, making an asymmetric mating.}
    \label{fig:matings}
\end{figure}

\begin{rmk}\label{rmk.roots}
    Under the hypothesis we make for the rational maps $f$ and $g$ in Theorem \ref{main.parabolic_mating}, we are excluding a few other rational maps for which similar arguments should allow the construction of matings as well. Namely, if $f$ and $g$ have $\infty$ as a parabolic fixed point with two aligned attracting directions, then the compositions $g\circ f$ and $f\circ g$ have the same attracting directions. If the basins of attraction for each of the directions are connected for the map $g\circ f$, one can easily show that they are as well for $f\circ g$ just like in Proposition \ref{prop.alignment} (since they are images under $f$). Then, one can choose the filled Julia set of $g\circ f$ to be the complement of one of the basins, and the filled Julia set of $f\circ g$ to be its image under $f$. The constructions presented for the proof of Theorem \ref{main.parabolic_mating} all work exactly the same, as the 'external' parabolic direction has been differentiated from the 'internal' one. For instance, take
    \[ f(z) = \frac{(p + 1)z^p + (p - 1)}{(p + 1) + (p - 1)z^p} \ \text{ and } \ g(z) = \frac{(q + 1)z^q + (q - 1)}{(q + 1) + (q - 1)z^q}. \]
    The compositions $g\circ f$ and $f\circ g$ are the same, namely the map
    \[ g\circ f(z) = \frac{(pq + 1)z^{pq} + (pq - 1)}{(pq + 1) + (pq - 1)z^{pq}}. \]
    This can be computed directly, but it is easier to notice that $g\circ f$ and $f\circ g$ both have critical points of order $pq - 1$ at the origin, and the parabolic fixed point at $1$. These maps are the `roots' of the connectedness loci of the families $\Per_1^d(1)$, and represent the external class of the family $\mathcal{B}_d$ in \cite{bullett-lomonaco-lyubich-mukherjee-2024}, for the corresponding degree $d$.
    
    The examples given above are all Blaschke products. In general, Blaschke products which present the same parabolic fixed point fall within the scope of the main result of \cite{luo-mj-mukherjee-2026}, showing where the techniques of both articles intersect.
\end{rmk}

\begin{proof}[Proof of Theorem \ref{main.parabolic_cov}]

    Let $F$ be a mating between the pair $(f, g)$ and the group $\Gamma_{p,q}$, and $\widehat{\C} = \Omega\cup \Lambda$ be the induced decomposition into $F$-invariant sets. Then there are maps $\phi: \Omega \to \D$, $\phi_-: \Lambda_- \to K(g\circ f)$, and $\phi_+: \Lambda_+ \to K(f\circ g)$, such that: $\phi$ is conformal and conjugates the action of $F$ on $\Omega$ with that of the elements $\{\sigma^n\rho^m \ | \ 1\leq m\leq p, 1\leq n\leq q\}$; $\phi_-$ is hybrid (has zero distortion in $\Lambda_-$) and conjugates the map $F|_{\Lambda_-}: \Lambda_- \to \Lambda_-$ to the map $g\circ f: K(g\circ f) \to K(g\circ f)$; and $\phi_+$ is hybrid (has zero distortion in $\Lambda_+$) and conjugates the $(1,pq)$ correspondence $F|_{\Lambda_+}: \Lambda_+ \to \Lambda_+$ to the $(1,pq)$ correspondence $(f\circ g)^{-1}: K(f\circ g) \to K(f\circ g)$.

    We define the `tile' $T := \phi(\Delta_\rho)\cup \Lambda_+$. The action of $\rho$ that identifies the edges of $\Delta_\rho$ induces, via $\phi$, an identification of the edges of $T$, which we denote as $\sim$, so that $\Sigma := T/\sim$ is a sphere. We will define the map $P: \widehat{\C} \to \Sigma$ by setting
    \[ P(z) = \begin{cases} z & \text{if } z \in T, \\ \phi\circ \rho^m\circ \phi^{-1}(z) & \text{if } z \in \phi(\rho^{-m}(\Delta_\rho)), \\ \phi_+^{-1}\circ f\circ \phi_-(z) & \text{if } z \in \Lambda_-. \end{cases} \]
    See Figure $\ref{fig:cov_construction}$. It is clear that $P$ is a quasi-regular map, and furthermore it has zero distortion everywhere since $\phi$ is conformal and $\phi_-, \phi_+$ have zero distortion in $\Lambda_-, \Lambda_+$, respectively. Thus $P$ is a degree $p + 1$ holomorphic map, and the image of the fixed point of $\rho$ under $\phi$ is a branch point of $P$ of order $p$. Thus, for a good choice of coordinates on $\Sigma_1$, $P$ leaves that point completely invariant, meaning it is conjugate to a polynomial.

    Similarly, we can set $S := \phi(\Delta_\sigma)\cup \Lambda_-$, define $\Sigma_2$ to be the sphere obtained by identifying the edges of $S$ via the induced action of $\sigma$, and define the quasi-regular degree $q + 1$ map $Q: \widehat{\C} \to \Sigma_2$ by
    \[ Q(z) = \begin{cases} z & \text{if } z \in S, \\ \phi\circ \sigma^n\circ \phi^{-1}(z) & \text{if } z \in \phi(\sigma^{-n}(\Delta_\sigma)), \\ \phi_-^{-1}\circ g\circ \phi_+(z) & \text{if } z \in \Lambda_+. \end{cases} \]
    Again, $Q$ has zero distortion, and is thus a rational map. The image under $\phi$ of the fixed point of $\sigma$ is a branch point of order $q$ for $Q$, and thus, with a suitable choice of coordinate for $\Sigma_2$, the map $Q$ is conjugate to a polynomial.
    
    Finally, it is clear that $\Cov_0^Q\circ \Cov_0^P$ coincides with $F$ on $\Omega$. Since both $\Cov_0^Q\circ \Cov_0^P$ and $F$ are holomorphic correspondences, it follows (by analytic continuation) that they coincide everywhere. 
    
\end{proof}

\begin{figure}
    \centering
    \includegraphics[width=0.7\linewidth]{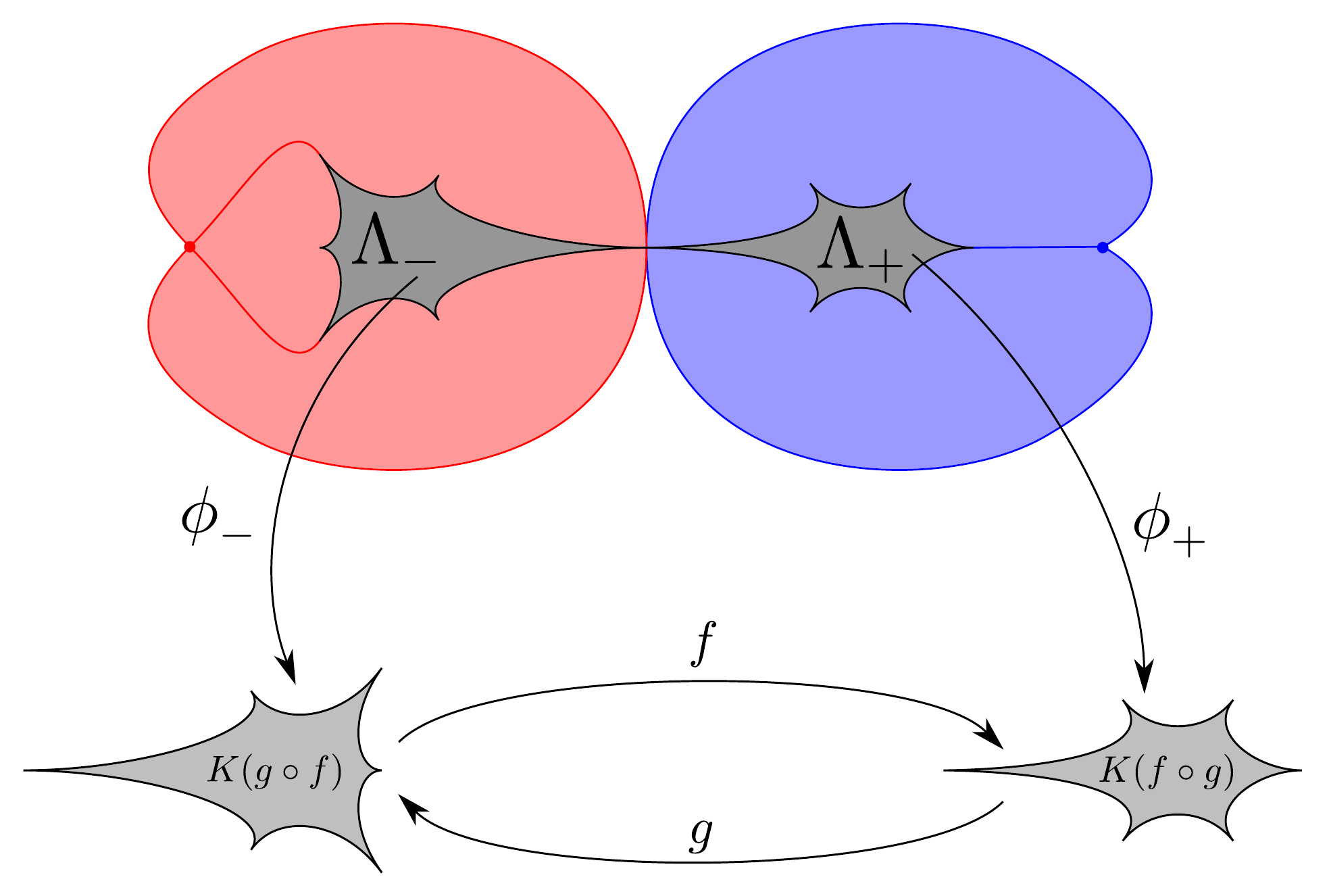}
    \caption{Constructions in the proof of Theorem \ref{main.parabolic_cov}. The tile $T$ is everything outside the red region (i.e. it is the union of the white region, the blue region and $\Lambda_+$). Similarly $S$ is everything outside the blue region. The red and blue dots are the images under $\phi$ of the fixed points of $\rho$ and $\sigma$, respectively. The red region is the union of the sets $\phi(\rho^{-m}(\Delta_\rho))$ for $1\leq m \le p$, and the blue region is the union of the sets $\phi(\sigma^{-n}(\Delta_\sigma))$ for $1\leq n \le q$. The sphere $\Sigma_1$ is the quotient of the tile $T$ resulting from the identification of its upper and lower edges induced by $\rho$. Similarly the sphere $\Sigma_2$ is the quotient of $S$ by the edge-pairing induced by $\sigma$.}
    \label{fig:cov_construction}
\end{figure}

We finish this Section with the proof of Proposition \ref{main.time-reversibility-condition}. Although its proof is quite simple, we again emphasize that the property of time-reversibility was, up until this point, common to all mating phenomena between rational maps and Kleinian groups.

\begin{proof}[Proof of Proposition \ref{main.time-reversibility-condition}]

    Let $\varphi$ be a M\"{o}bius map conjugating $F$ and $F^{-1}$. As $\Omega$ and $\Lambda$ are fully $F$-invariant (and therefore also fully $F^{-1}$-invariant), we immediately see that $\varphi$ preserves both $\Omega$ and $\Lambda$. Around $\Lambda_-$, there is a forward $pq:1$ branch of $F$ that is a pinched polynomial-like map. Thus, $\varphi$ conjugates that with a forward $pq:1$ branch of $F^{-1}$ around some closed subset $\hat{\Lambda} \subset \Lambda$ that is also a pinched polynomial-like. Noticing that $\varphi$ also has to fix the parabolic fixed point lying in the intersection $\Lambda_-\cap \Lambda_+$, and knowing the dynamics of $F^{-1}$ around $\Lambda_-$ is the inverse of a rational map around its filled Julia set, it is clear that $\varphi$ is conjugating $F|_{\Lambda_-}$ with $F^{-1}|_{\Lambda_+}$ conformally around pinched neighborhoods of $\Lambda_-$ and $\Lambda_+$. Since $F|_{\Lambda_-}$ and $F|_{\Lambda_+}$ are hybridly conjugated to $g\circ f|_{K(g\circ f)}$ and $f\circ g|_{K(f\circ g)}$, respectively, we see that $g\circ f$ and $f\circ g$ are hybrid equivalent. Since $K(g\circ f)$ and $K(f\circ g)$ are both connected, this implies that these maps are in fact conformally conjugated.
    
\end{proof}

We return to the example of $f(z) = z^2/(z + 1/2)$ and $g(z) = (2z^2 - z - 1/2)/(2z + 1)$, presented in the right image of Figure \ref{fig:matings}. The map $f$ has its parabolic fixed point at $\infty$, the pre-image of this point at $-1/2$, and a critical point at $0$, which is also fixed. The map $g$ has its parabolic fixed point at $\infty$, the pre-image of this point also at $-1/2$, and a critical point also at $0$, which is mapped to $-1/2$ under $g$. For the composition $g\circ f$, the critical point $0$ is a triple critical point, as $(g\circ f)'(z) = g'(f(z))f'(z)$, with $0$ being a simple zero of $f'(z)$, and a double zero of $g'(f(z))$. On the other hand, $f\circ g$ has $0$ as a simple critical point: $(f\circ g)'(z) = f'(g(z))g'(z)$, with $0$ only being a zero of $g'(z)$, as $g^{-1}(0)$ is comprised of two points, neither of them $0$. Thus, $g\circ f$ and $f\circ g$ cannot be conformally conjugated (they are not even topologically conjugated), meaning that the mating $F$ between the pair $(f, g)$ and $\Gamma_{2,2}$ is not time-reversible.


\section{Hyperbolic surgery}\label{sec.hyperbolic_surgery}

In this section, we will introduce perturbations of $\Gamma_{p,q}$ as a discrete subgroup of $PSL(2,\R)$ (and more generally, of $PSL(2,\C)$), for which all non-elliptic elements are hyperbolic. These groups will be mated with polynomials in the sense of Definition \ref{deft.hyperbolic_mating}. We start with a quick review of some aspects of the iteration theory of polynomials.

\subsection{Polynomials}

The discussion in Section \ref{subsec.parab_maps} becomes more classical in the case of polynomials. If $f$ is a degree $p$ polynomial and $g$ is a degree $q$ polynomial, then $g\circ f$ and $f\circ g$ are both degree $pq$ polynomials. As such, they have associated \textit{filled Julia sets} $K(g\circ f)$ and $K(f\circ g)$, being the complements of the basins of attraction of $\infty$.

\begin{prop}

    $f(K(g\circ f)) = K(f\circ g)$.
    
\end{prop}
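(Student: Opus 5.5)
The plan is to argue through the basins of infinity, exactly as in Proposition~\ref{prop.alignment}, but now with the simpler characterization available for polynomials: a point lies in $K(R)$ if and only if its forward $R$-orbit is bounded. So I will show that the complements correspond, namely
\[ f^{-1}\bigl(\mathcal{A}_\infty(f\circ g)\bigr) = \mathcal{A}_\infty(g\circ f), \]
where $\mathcal{A}_\infty(\cdot)$ denotes the basin of $\infty$. Since $f$ is surjective on $\C$, this identity gives $f(K(g\circ f)) = K(f\circ g)$ directly: a point lies in $K(f\circ g)$ exactly when one, equivalently every, $f$-preimage lies in $K(g\circ f)$.

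The first key step is the inclusion $f(K(g\circ f)) \subseteq K(f\circ g)$. If $z \in K(g\circ f)$, then the orbit $\{(g\circ f)^n(z)\}$ is bounded, so it lies in a compact set $C$. Using the semiconjugacy identity
\[ (f\circ g)^n(f(z)) = f\bigl((g\circ f)^n(z)\bigr), \]
the $(f\circ g)$-orbit of $f(z)$ lies in the compact set $f(C)$. It is therefore bounded, and $f(z) \in K(f\circ g)$.

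The second key step is the reverse inclusion. Take $w \in K(f\circ g)$ and any $z$ with $f(z)=w$; such a $z$ exists because $f$ is a nonconstant polynomial. The identity
\[ (g\circ f)^{n}(z) = g\bigl((f\circ g)^{n-1}(w)\bigr), \qquad n \ge 1, \]
exhibits the $(g\circ f)$-orbit of $z$ as the image under $g$ of a bounded set. That orbit is therefore bounded, so $z \in K(g\circ f)$ and $w = f(z) \in f(K(g\circ f))$.

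There is essentially no serious obstacle. The only point needing care is that the polynomial setting replaces the directional convergence used in Proposition~\ref{prop.alignment} by mere boundedness, and this relies on continuity of $f$ and $g$ on $\C$ mapping compact sets to compact sets. Some care is also needed at the index shift in the second identity. The symmetric statement $g(K(f\circ g)) = K(g\circ f)$ follows by exchanging the roles of $f$ and $g$.
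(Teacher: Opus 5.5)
Your proof is correct and is essentially the paper's argument. The paper uses the same two identities, $(f\circ g)^n\circ f = f\circ(g\circ f)^n$ and $(g\circ f)^n = g\circ(f\circ g)^{n-1}\circ f$, to show $f(\mathcal{A}(g\circ f)) = \mathcal{A}(f\circ g)$ via escaping orbits, whereas you run them on bounded orbits; a small advantage of your version is that it proves both inclusions for $K$ outright, while the paper's basin identity gives the statement about $K$ only after noting, implicitly, that its second step applies to every $f$-preimage, i.e.\ $f^{-1}(\mathcal{A}(f\circ g)) = \mathcal{A}(g\circ f)$.
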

\begin{proof}

    Let us denote by $\mathcal{A}(g\circ f)$ and $\mathcal{A}(f\circ g)$ the basins of attraction of $\infty$ of the maps $g\circ f$ and $f\circ g$, respectively. We will show that $f(\mathcal{A}(g\circ f)) = \mathcal{A}(f\circ g)$. Firstly, $z \in \mathcal{A}(g\circ f) \iff (g\circ f)^n(z) \xrightarrow{} \infty$. Thus, the point $f(z)$ satisfies
    \[ (f\circ g)^n(f(z)) = f((g\circ f)^n(z)) \xrightarrow{} \infty, \]
    and therefore $f(\mathcal{A}(g\circ f)) \subseteq \mathcal{A}(f\circ g)$. On the other hand, if $w \in \mathcal{A}(f\circ g)$, then $w = f(z)$ for some $z \in \widehat{\C}$ and
    \[ (g\circ f)^n(z) = g((f\circ g)^{n-1}(f(z))) = g((f\circ g)^{n-1}(w)) \xrightarrow{} \infty \]
    and so $z \in \mathcal{A}(g\circ f)$. Thus $\mathcal{A}(f\circ g) \subseteq f(\mathcal{A}(g\circ f))$, which completes the proof.
    
\end{proof}

As before, we will need to localize the actions of polynomials around their filled Julia sets, and for this we use \textit{polynomial-like maps}, first introduced in \cite{douady-hubbard-1985}. We recall their definition.

\begin{deft}

    A triple $(R, U', U)$ is a \textit{polynomial-like map} if $U, U'$ are simply connected open sets, $U' \Subset U$, and $f: U' \to U$ is a proper holomorphic map.
    
\end{deft}

Given that $g\circ f$ is a polynomial, one may take a simply connected neighborhood $U$ of $K(g\circ f)$ such that $U' := (g\circ f)^{-1}(U)$ is also simply connected and $U' \Subset U$. We notice that $f(U') = g^{-1}(U)$ is also a simply connected neighborhood of $K(f\circ g)$. We can then choose $V$ a larger simply connected neighborhood of $K(f\circ g)$ such that, setting $V' := (f\circ g)^{-1}(V)$, one has $V' \Subset f(U') \Subset V$. See Figure \ref{fig:polynomial_like_restrictions}.

\begin{figure}
    \centering
    \includegraphics[width=0.8\linewidth]{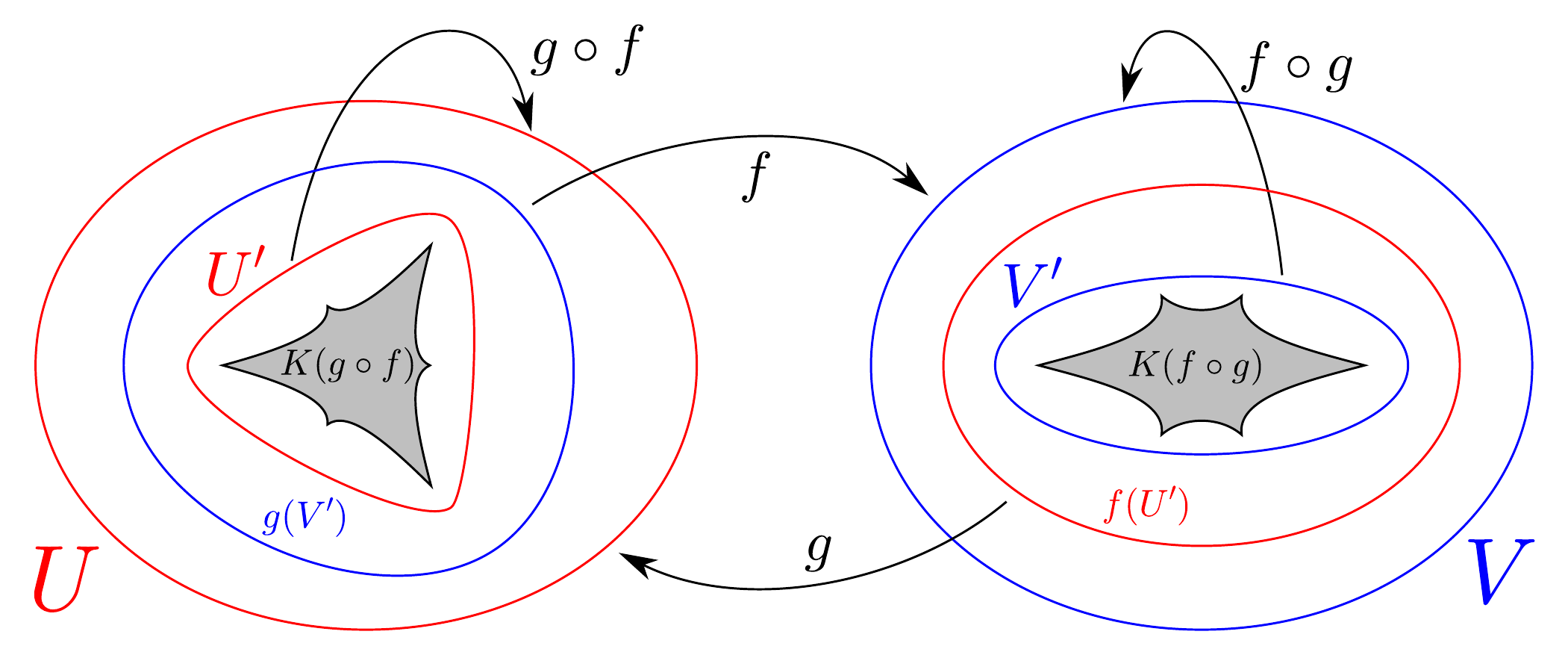}
    \caption{The polynomial-like restrictions of $g\circ f$ and $f\circ g$, and the intermediary domains induced by $f$ and $g$.}
    \label{fig:polynomial_like_restrictions}
\end{figure}

\subsection{Perturbing the group}

We next consider perturbations of $\Gamma_{p,q}$ to representations $\Gamma$ of the group $\C_{p+1}\ast C_{q+1}$ that remain faithful and discrete, but have no parabolic elements. For clarity of exposition we will first consider Fuchsian perturbations (to subgroups of $PSL(2,\R)$). Let $P$ and $Q$ denote the fixed points of $\rho$ and $\sigma$, the generators of $\Gamma_{p,q}$ of order $p+1$ and $q+1$, respectively, and let $\varepsilon$ be real and positive. Let $\varphi_{P, \varepsilon}$ denote the automorphism of the unit disk $\D$ which preserves the real line and exchanges $P$ with $P-\varepsilon$, and let $\varphi_{Q, \varepsilon}$ denote the analogous automorphism which exchanges $Q$ with $Q + \varepsilon$. Then the automorphisms
\[ \sigma_\varepsilon := \varphi_{Q, \varepsilon}\circ \sigma\circ \varphi_{Q, \varepsilon}^{-1} \ \text{ and } \ \rho_\varepsilon := \varphi_{P,\varepsilon}\circ \rho\circ \varphi_{P, \epsilon}^{-1} \]
generate a discrete group $\Gamma_\varepsilon := \left< \sigma_\varepsilon, \rho_\varepsilon \right>$ that is isomorphic to the free product $C_{p+1}\ast C_{q+1}$ of cyclic groups, but the compositions $\rho\sigma$ and $\sigma\rho$ are now hyperbolic. We can find a fundamental domain for $\Gamma_\varepsilon$ in a similar fashion to the way we did for $\Gamma$. We define $\Delta_{\rho_\varepsilon}$ to be the subset of $\D$ bounded by the pair of geodesics symmetrically placed with respect to the real axis, which start at the fixed point $P - \varepsilon$ of $\rho_\varepsilon$, and have angle $2\pi/(p+1)$ between them there. Similarly, we define $\Delta_{\sigma_\varepsilon}$ to be the subset of $\Delta$ bounded by the pair of symmetrically placed geodesics starting at $Q + \varepsilon$ which have angle $2\pi/(q+1)$ between them there.
These fundamental domains $\Delta_{\rho_\varepsilon}$ and $\Delta_{\sigma_\varepsilon}$ for the actions of $\rho_\varepsilon$ and $\sigma_\varepsilon$ respectively, have intersection $\Delta_\varepsilon = \Delta_{\rho_\varepsilon}\cap \Delta_{\sigma_\varepsilon}$ a fundamental domain for $\Gamma_\varepsilon$. See Figure \ref{fig:cp_cq_perturbed_fundamental_domain}.

\begin{figure}
    \centering
    \includegraphics[width=0.4\linewidth]{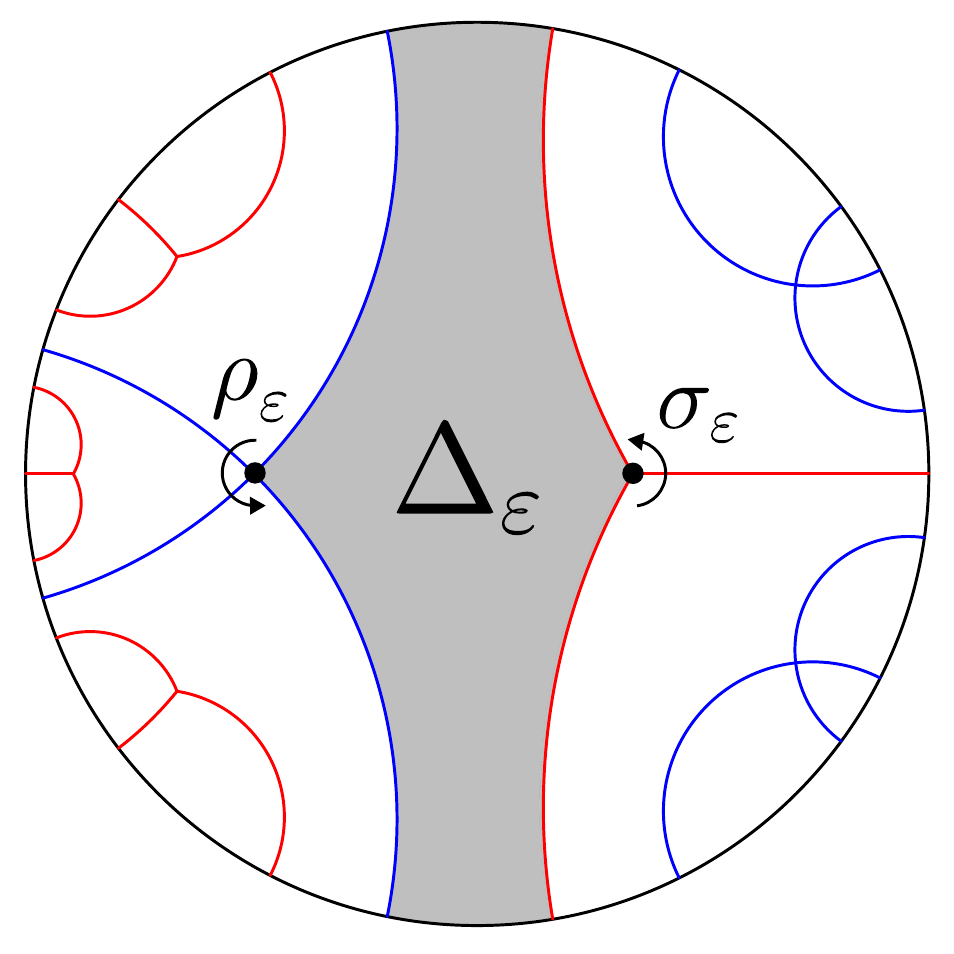}
    \caption{The fundamental domain $\Delta_\varepsilon$ for the action of the perturbed group $\Gamma_{p,q}^\varepsilon$. The first images of the boundary of $\Delta_\varepsilon$ by elements of the group are also depicted.}
    \label{fig:cp_cq_perturbed_fundamental_domain}
\end{figure}

Now regard $\Gamma_\varepsilon$ as a subgroup of $PSL(2,\C)$, acting as isometries of the Poincar\'e $3$-disk and orientation-preserving conformal automorphisms of its boundary, the Riemann sphere $\widehat{\C}$. Consider the action of the involution $\chi(z) = 1/z$. One can see that $\chi\sigma_\varepsilon = \sigma_\varepsilon^{-1}\chi$ and $\chi\rho_\varepsilon = \rho_\varepsilon^{-1}\chi$, since $\chi$ exchanges the fixed points of $\sigma_\varepsilon$ on $\widehat{\C}$ and also exchanges those of $\rho_\varepsilon$. Notice that, since the hemisphere $\D$ is a fundamental domain for $\chi$ on $\widehat{\C}$, the group $G_\varepsilon := \left< \sigma_\varepsilon, \rho_\varepsilon, \chi \right>$ acting on $\widehat{\C}$ has the same regular set as $\Gamma_\varepsilon$ acting on $\widehat{\C}$. Furthermore, it is clear that $\faktor{\D\cup (\widehat{\C}\setminus \overline{\D})}{G_\varepsilon}$ is the same orbifold as $\faktor{\D}{\Gamma_\varepsilon}$.

We return now to the action of $\Gamma_\varepsilon$ on the Poincar\'e $2$-disk $\D$. As in the case of $\Gamma_{p,q}$, we denote the geodesic connecting $i$ to $-i$ by $I$, and, as in that case, we have $I \subset \Delta_\varepsilon$, but now the images of $I$ under the group elements $\sigma_\varepsilon^n\rho_\varepsilon^m$ (respectively $\rho_\varepsilon^m\sigma_\varepsilon^n$), $1\leq n\leq q$, $1\leq m\leq p$, are geodesics on the right (respectively left) side of $I$ which have ends on the boundary of $\D$ that are separated from one another. Nevertheless, we can still consider the domains in $\D$ bounded by $I$ and these sets of images of $I$: let us call them $A_\varepsilon$ and $B_\varepsilon$, respectively. The boundaries of  $A_\varepsilon$ and $B_\varepsilon$ are made up of $I$, its images under the elements $\sigma_\varepsilon^n\rho_\varepsilon^m$ or $\rho_\varepsilon^m\sigma_\varepsilon^n$ of $\Gamma_\varepsilon$ which take $I$ to the relevant side of $I$, and a collection $\mathcal{J}$ of intervals in $\s^1$ between adjacent end points. The two intervals in the boundary of $A_\varepsilon$ that have $i$ and $-i$ as an end point can be identified with each other using the map $\chi$, and similarly the two intervals in the boundary of $B_\varepsilon$ with this property can be identified with one another via $\chi$; making these identifications converts $A_\varepsilon$ and $B_\varepsilon$ into annuli. Our immediate objective now is to find an orientation-reversing element $g_J \in G_\varepsilon$ for every other interval $J \in \mathcal{J}$, such that $g_JJ = J$ (see Figure \ref{fig:perturbed_group_identifications}). If we can find such $g_J$ and we quotient $A_\varepsilon$ and $B_\varepsilon$ by these identifications in addition to our initial identifications by $\chi$, we will create annuli $A$ and $B$, respectively, where the inner boundary of $A$ is made up of the images of $I$ under $\rho_\varepsilon^m\sigma_\varepsilon^n$, and the inner boundary of $B$ is made up of the images of $I$ under $\sigma_\varepsilon^n\rho_\varepsilon^m$. Then the elements $\sigma_\varepsilon^n\rho_\varepsilon^m$ and $\rho_\varepsilon^m\sigma_\varepsilon^n$ of $\Gamma_\varepsilon$ will map the inner boundaries of these annuli to outer boundaries as $pq:1$ maps, in exactly the same way that $g\circ f$ and $f\circ g$ map the boundaries of the domains of their polynomial-like restrictions (in Figure \ref{fig:polynomial_like_restrictions}).

\begin{lema}

    Let $J$ be an interval in $\mathcal{J}$ that does not have $i$ or $-i$ as an end point. If $J$ contains an endpoint of one of the geodesic rays that bound $\Delta_{\rho_\varepsilon}$ or of one of its images under $\rho_\varepsilon$, then $J = \chi\rho_\varepsilon^m J$ for some $1\leq m\leq p$. If $J$ contains an endpoint of one of the geodesic rays that bound $\Delta_{\sigma_\varepsilon}$ or of one of its images under $\sigma_\varepsilon$, then $J = \chi\sigma_\varepsilon^n J$ for some $1\leq n\leq q$. Otherwise, there exists an element $g = \rho_\varepsilon^{n_3}\sigma_\varepsilon^{n_2}\rho_\varepsilon^{n_1}$ or $\sigma_\varepsilon^{n_3}\rho_\varepsilon^{n_2}\sigma_\varepsilon^{n_1}$ such that $J = \chi gJ$.
    
\end{lema}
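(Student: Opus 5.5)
The plan is to exploit the fact that $\chi(z)=1/z$ agrees with complex conjugation on $\s^1$. So $\chi$ restricted to the circle is the reflection in the real axis, which is the common axis of symmetry of $\Delta_{\rho_\varepsilon}$ and $\Delta_{\sigma_\varepsilon}$. From $\chi\rho_\varepsilon\chi=\rho_\varepsilon^{-1}$ we get $(\chi\rho_\varepsilon^m)^2=\rho_\varepsilon^{-m}\rho_\varepsilon^m=\mathrm{id}$. Hence each $\chi\rho_\varepsilon^m$ is an orientation-reversing involution of $\D$ fixing $P-\varepsilon$, that is, the reflection in a geodesic through $P-\varepsilon$. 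Since $\rho_\varepsilon$ rotates by $2\pi/(p+1)$ about $P-\varepsilon$, this geodesic makes angle $\pm m\pi/(p+1)$ with the real axis. As $m$ runs over $1,\dots,p$, these axes include exactly the geodesics through $P-\varepsilon$ that contain the boundary rays of $\Delta_{\rho_\varepsilon}$ and of its images $\rho_\varepsilon^k\Delta_{\rho_\varepsilon}$, since those rays sit at angles $\pm\pi/(p+1)+2k\pi/(p+1)$. The same holds for $\chi\sigma_\varepsilon^n$ at $Q+\varepsilon$, with $q+1$ in place of $p+1$.

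The key structural step is to show that each such reflection $r$ preserves the relevant configuration, namely the set of geodesics $\{\rho_\varepsilon^m\sigma_\varepsilon^n I\}$ (respectively $\{\sigma_\varepsilon^n\rho_\varepsilon^m I\}$) together with $I$. Since $\chi$ normalizes $\Gamma_\varepsilon$ and preserves $I$, every element of $G_\varepsilon$ permutes the $\Gamma_\varepsilon$-orbit of $I$. For the reflections $r=\chi\rho_\varepsilon^m$ one checks directly that $r$ maps the finite family bounding $A_\varepsilon$ (or $B_\varepsilon$) to itself, using $r\rho_\varepsilon^k\sigma_\varepsilon^n I=\chi\rho_\varepsilon^{m+k}\sigma_\varepsilon^n I=\rho_\varepsilon^{-m-k}\sigma_\varepsilon^{-n}I$ and reducing exponents modulo $p+1$ and $q+1$. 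Consequently $r$ permutes the complementary gaps $\mathcal{J}$. When $J$ contains the landing point $\xi$ of a ray lying on the axis of $r$, the point $\xi$ is fixed by $r$. So $r$ sends $J$ to the gap containing $\xi$, which is $J$ itself, and this gives $J=\chi\rho_\varepsilon^mJ$ (or $J=\chi\sigma_\varepsilon^nJ$).

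For the remaining gaps, those not containing an endpoint of a ray through $P-\varepsilon$ or $Q+\varepsilon$ or their first images, I would locate $J$ inside a translate of the previous picture. For example, a gap between $\rho_\varepsilon^k\sigma_\varepsilon^nI$ and $\rho_\varepsilon^k\sigma_\varepsilon^{n+1}I$ equals $\rho_\varepsilon^kJ_0$, where $J_0$ is a gap between $\sigma_\varepsilon^nI$ and $\sigma_\varepsilon^{n+1}I$ containing the endpoint of a $\sigma_\varepsilon$-ray. The previous case gives $J_0=\chi\sigma_\varepsilon^jJ_0$. Conjugating, we obtain $J=\rho_\varepsilon^k\chi\sigma_\varepsilon^j\rho_\varepsilon^{-k}J=\chi\rho_\varepsilon^{-k}\sigma_\varepsilon^j\rho_\varepsilon^{-k}J$, which is of the form $\chi\rho_\varepsilon^{n_3}\sigma_\varepsilon^{n_2}\rho_\varepsilon^{n_1}J$ after reducing exponents. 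The symmetric case with the roles of $\rho_\varepsilon$ and $\sigma_\varepsilon$ exchanged gives the other word. To complete this case I would also check that the conjugated reflection still preserves the boundary family of $A_\varepsilon$ near $J$, which follows from the same exponent computation.

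The main obstacle is the bookkeeping. One must verify that the gaps $\mathcal{J}$ are exactly of these three kinds, so that every gap contains precisely one fixed point of one of these reflections. One must also verify that the reflection preserves the finite boundary family of $A_\varepsilon$, not just the full orbit of $I$. I would handle this by working in the ordered cyclic list of endpoints on the left (respectively right) half-circle. There the endpoints of $\rho_\varepsilon^m\sigma_\varepsilon^nI$ occur in nested blocks: $p$ blocks indexed by $m$, each subdivided into $q$ sub-blocks indexed by $n$. Gaps between blocks contain $\rho_\varepsilon$-ray endpoints, and gaps within a block contain $\rho_\varepsilon^k$-images of $\sigma_\varepsilon$-ray endpoints, which matches the case split in the lemma.
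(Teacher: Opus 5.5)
You handle the within-block gaps correctly, and this is a cleaner form of the paper's step, which pulls $J$ and $\chi J$ back into the arc of $\Delta_{\rho_\varepsilon}$ and matches them by a power of $\sigma_\varepsilon$. For $J_0$, the relevant family $\{\sigma_\varepsilon^bI\}_{b=0}^{q}$ is a full $\langle\sigma_\varepsilon\rangle$-orbit, so it really is invariant under $\chi\sigma_\varepsilon^j$. Conjugating by $\rho_\varepsilon^k$ then gives the word $\rho_\varepsilon^{-k}\sigma_\varepsilon^{j}\rho_\varepsilon^{-k}$.

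The gap is in your first case. You claim that $r=\chi\rho_\varepsilon^m$ maps the finite family $\{I\}\cup\{\rho_\varepsilon^k\sigma_\varepsilon^nI\}$ to itself. This is false for every $m\not\equiv 0\pmod{p+1}$:
\begin{itemize}
\item on $\s^1$, $r$ sends $\{\pm i\}$ to the endpoints of $\rho_\varepsilon^{-m}I$, which is not in the family;
\item your own formula sends $\rho_\varepsilon^{k}\sigma_\varepsilon^nI$ with $k\equiv -m$ to $\sigma_\varepsilon^{-n}I$, which lies on the other side of $I$ (reducing exponents produces $\rho_\varepsilon^0$).
\end{itemize}
So $r$ does not permute $\mathcal{J}$, and the step ``$r$ sends $J$ to the gap containing its fixed point, namely $J$'' has no support. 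This two-level family is invariant only under $\chi$ itself.

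The conclusion can be rescued in either of two ways.

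\emph{Locally.} You only need $r$ to exchange the two geodesics adjacent to $J$. Take $\rho_\varepsilon,\sigma_\varepsilon$ rotating counterclockwise, and let $\zeta$ be the landing point of the upper ray bounding $\Delta_{\rho_\varepsilon}$. A between-block gap $J$ lies between $\rho_\varepsilon^a\sigma_\varepsilon^qI$ and $\rho_\varepsilon^{a+1}\sigma_\varepsilon I$ and contains $x=\rho_\varepsilon^a\zeta$. The condition $r(x)=x$ forces $m\equiv -2a-1$. With that $m$, $r\rho_\varepsilon^a\sigma_\varepsilon^qI=\rho_\varepsilon^{a+1}\sigma_\varepsilon I$ and vice versa. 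So $r$ swaps the endpoints of $J$ and fixes an interior point, hence $rJ=J$.

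\emph{Following the paper.} $\chi$ does preserve the family, since $\chi\rho_\varepsilon^k\sigma_\varepsilon^nI=\rho_\varepsilon^{-k}\sigma_\varepsilon^{-n}I$ with $-k\not\equiv 0$. Hence $\chi J$ is again a between-block gap, so $\chi J=\rho_\varepsilon^{j}J$ for some $j$, and $J=\rho_\varepsilon^{-j}\chi J=\chi\rho_\varepsilon^{j}J$.

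Separately, your assertion that the axes for $m=1,\dots,p$ contain all the boundary rays fails for even $p$. In that case one of these rays runs along the negative real axis to $-1$, which $\chi$ fixes. The gap containing $-1$ is preserved only by $\chi=\chi\rho_\varepsilon^0$; the same happens for even $q$ and the point $1$. This is a slip in the statement itself ($1\le m\le p$ should read $0\le m\le p$), and the paper's proof shares it. It is harmless for the application, where only the orientation reversal of $g_J$ matters, but your proof should not claim otherwise.
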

\begin{proof}

    We first notice that the endpoints of the intervals in $\mathcal{J}$ are images of $\pm i$ under elements $\sigma_\varepsilon^n\rho_\varepsilon^m$ and $\rho_\varepsilon^m\sigma_\varepsilon^n$ of $\Gamma_\varepsilon$. Since $\{i, -i\}$ is a $\chi$-invariant set, and $\chi$ anti-commutes with the generators $\sigma_\varepsilon$ and $\rho_\varepsilon$ of $\Gamma_\varepsilon$, their images also form a $\chi$-invariant set. Similarly, the set of endpoints of the geodesic rays bounding $\Delta_{\rho_\varepsilon}$ is $\chi$-invariant (because these geodesic rays are taken symmetric about the real axis), and thus the set of their images under $\rho_\varepsilon$ is also $\chi$-invariant; let us call this set $C_b$. An analogous claim clearly holds for the endpoints of the geodesic rays bounding $\Delta_{\sigma_\varepsilon}$ and their images under $\sigma_\varepsilon$; the $\chi$-invariant set in this case will be denoted $C_r$.
    
    The first case in the statement then follows immediately: $\chi$ maps every point of $C_b$ to a point of $C_b$ and the endpoints of intervals in $\mathcal{J}$ to endpoints of intervals in $\mathcal{J}$; if $J$ contains a point $x \in C_b$, then $\chi$ maps $J$ to some other $J' \in \mathcal{J}$ containing $\chi(x) \in C_b$. Now, intervals containing points of $C_b$ all lie on the boundary of $\Delta_{\sigma_\varepsilon}$, and thus their images under powers of $\rho_\varepsilon$ that do not send them to $\Delta_{\rho_\varepsilon}$ are also in the collection $\mathcal{J}$. In particular, if we take $1\leq m\leq p$ such that $\chi(x) = \rho_\varepsilon^m(x)$, then we find that $\chi I = \rho_\varepsilon^m I$, or equivalently that $I = \chi\rho_\varepsilon^m I$. A completely analogous argument also gives us the second statement.
    
    Let us now consider the final case. Assume that $J$ lies on the boundary of $\Delta_{\sigma_\varepsilon}$ --- the case when it lies on the boundary of $\Delta_{\rho_\varepsilon}$ is completely analogous. Then $\chi J \in \mathcal{J}$ also lies on the same boundary, and there must exist integers $m_1, m_2$ such that $J \subset \rho_\varepsilon^{m_1}\Delta_{\rho_\varepsilon}$ and $\chi J \subset \rho_\varepsilon^{m_2}\Delta_{\rho_\varepsilon}$. Thus the intervals $\rho_\varepsilon^{-m_1}J$ and $\rho_\varepsilon^{-m_2}\chi J$ are in $D_{\rho_\varepsilon}$, and each contains a point of $C_r$. Notice that these intervals are not in the collection $\mathcal{J}$. Nevertheless, there exists some power $m_3$ of $\sigma_\varepsilon$ such that $\sigma_\varepsilon^{m_3}\rho_\varepsilon^{-m_1}I = \rho_\varepsilon^{-m_2}\chi I$, which yields the statement claimed, with $n_1 = -m_1, n_2 = m_3$ and $n_3 = m_2$.
    
\end{proof}

\begin{figure}
    \centering
    \includegraphics[width=0.7\linewidth]{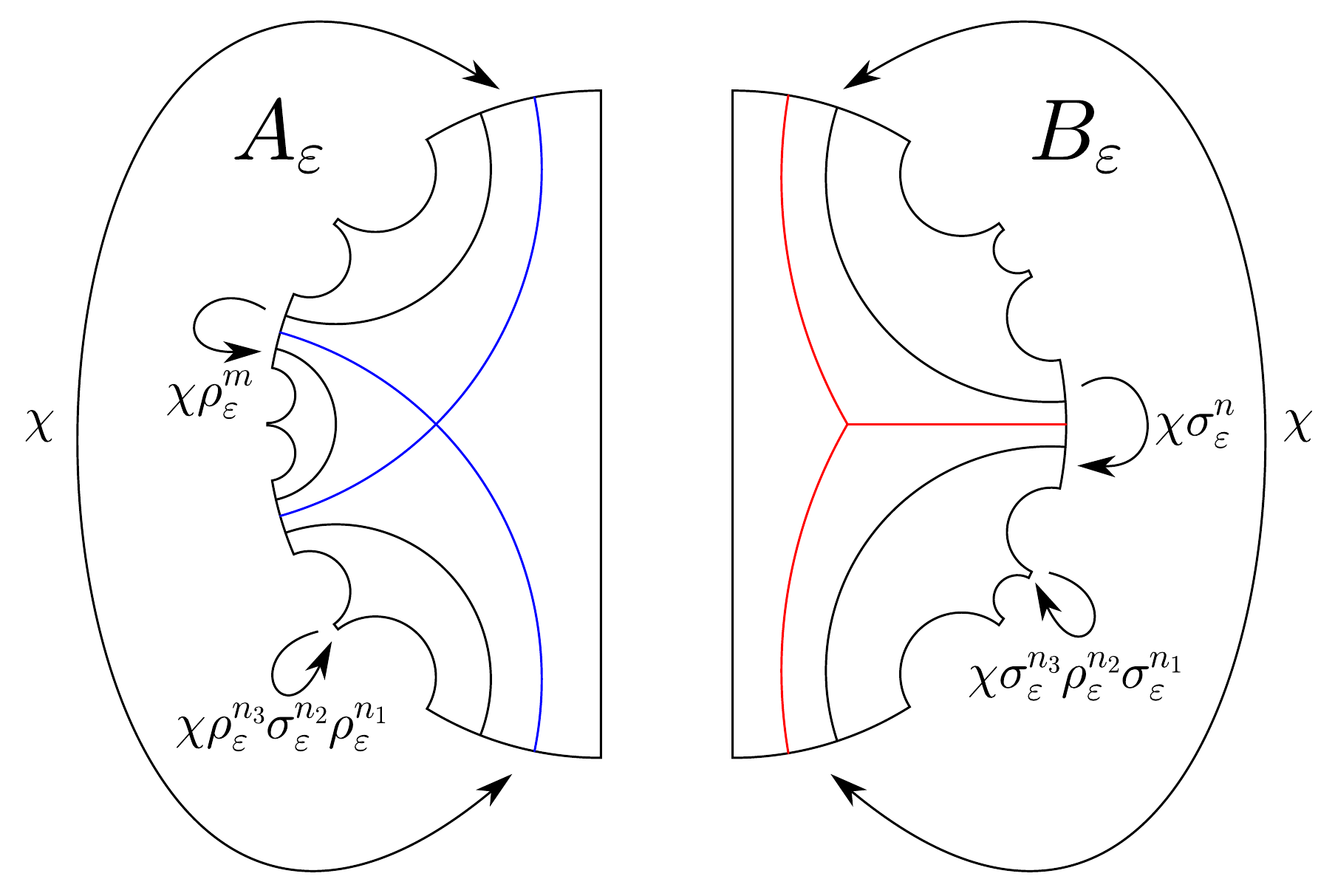}
    \caption{An example of the identifications that convert each set $A_\varepsilon$ and $B_\varepsilon$ into an annulus. The endpoints of the curves in blue are the set $C_b$, while the endpoints of the curves in red are the set $C_r$.}
    \label{fig:perturbed_group_identifications}
\end{figure}

We now observe that the elements $g_J$ found by the previous lemma are compositions of powers of $\sigma_\varepsilon$ and $\rho_\varepsilon$ with a single $\chi$. Since $\sigma_\varepsilon$ and $\rho_\varepsilon$ are orientation-preserving on $\s^1$, and $\chi$ is orientation-reversing, $g_J$ has the  property we were seeking. We denote by $A$ and $B$ the annuli obtained from making all the identifications on $A_\varepsilon$ and $B_\varepsilon$.

We note that now the intermediary curves $\rho_\varepsilon^m I$ and $\sigma_\varepsilon^n I$, $1\leq m\leq p, 1\leq n\leq q$, will have their end points identified with one another, since they also form a $\chi$-invariant set. Thus, in $A$ and $B$, they descend to closed loops that divide each of these annuli into two concentric annuli; see Figure \ref{fig:perturbed_grouop_polynomial_like}. We shall denote by $\hat{A}$ and $\hat{B}$ the innermost of these narrower annuli.

\begin{figure}
    \centering
    \includegraphics[width=0.7\linewidth]{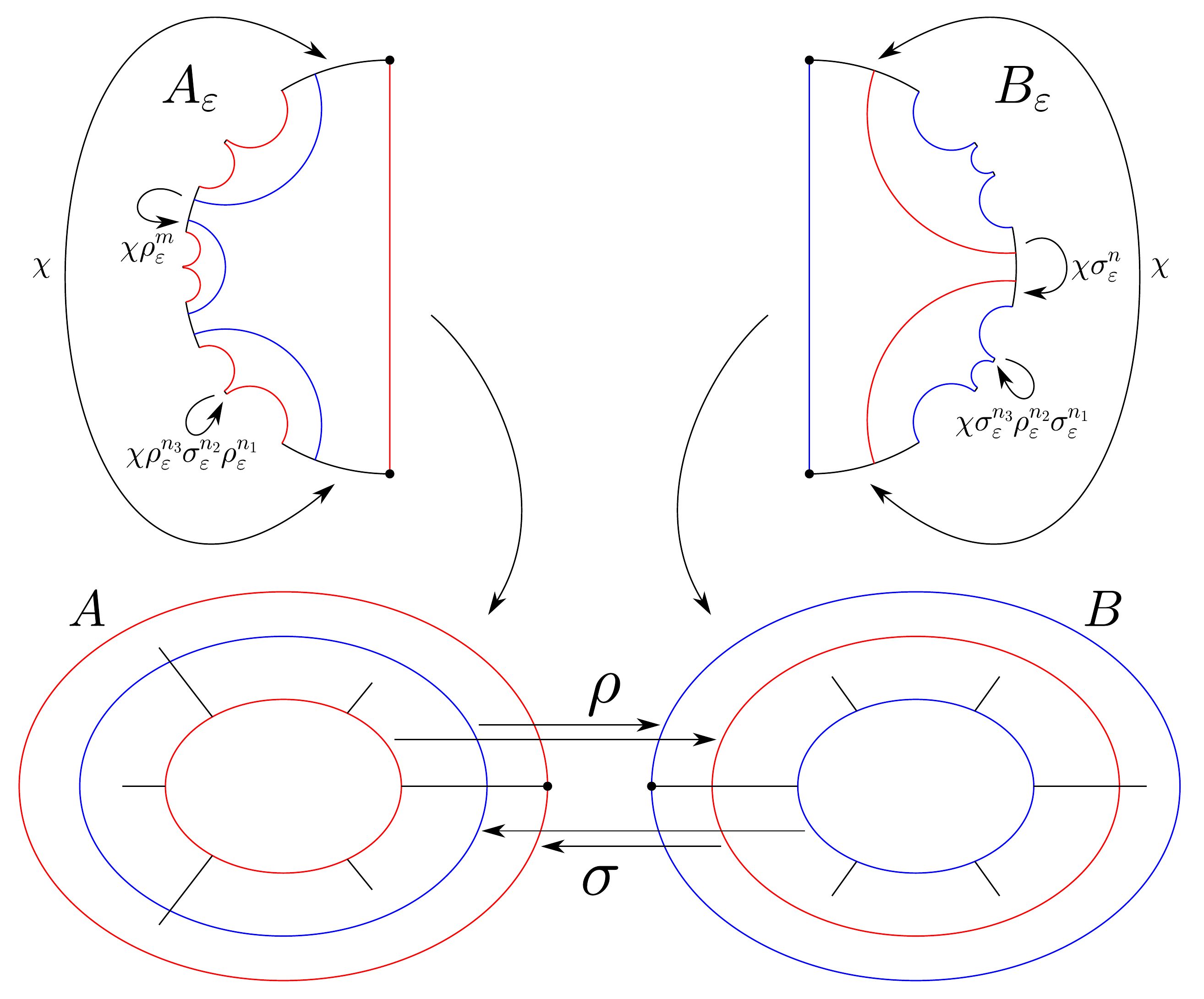}
    \caption{The annuli $A$ and $B$, and the induced actions of $\rho$ and $\sigma$ on their boundaries and intermediary curves. The precise power of $\rho$ or $\sigma$ varies along these curves. Notice the similarity with Figure \ref{fig:polynomial_like_restrictions}.}
    \label{fig:perturbed_grouop_polynomial_like}
\end{figure}

\subsection{Proofs of Theorems \ref{main.hyperbolic_mating} and \ref{main.hyperbolic_cov}}

We first remark that since the  faithful discrete representations of $C_{p+1} \ast C_{q+1}$ in $PSL(2,\C)$ constitute a single quasi-conformal conjugacy class, we lose no generality by working with $\Gamma_\varepsilon\subset PSL(2,\R)$ as a representative of this class of subgroups of $PSL(2,\C)$.

The definition of orbit mating used for Theorems \ref{main.hyperbolic_mating} and \ref{main.hyperbolic_cov} follows that in \cite{bullett-harvey-2000, ratis-laude-2025}: note that it differs from the usual definition used in the parabolic case (Definition \ref{deft.parabolic_mating}), in that on the group side we now only ask for grand orbit equivalence between the correspondence $F$ restricted to $\Omega$ and the group $\Gamma$ on $\D$ (see also Remark \ref{rmk.definition_by_quotient}).

\begin{deft}\label{deft.hyperbolic_mating}

    Let $f, g$ be polynomials of degrees $p, q$, respectively, and let $\Gamma$ be quasiconformally conjugate to $\Gamma_{p,q}^\varepsilon$. We say that a $pq:pq$ correspondence $F$ is an \textit{orbit mating} between the pair $(f, g)$ and the group $\Gamma$ if there exists a decomposition $\widehat{\C} = \Lambda\cup \Omega$ into two disjoint fully $F$-invariant sets such that:
    \begin{itemize}
        \item $\Lambda = \Lambda_-\cup \Lambda_+$ is the union of two disjoint compact sets, such that the $pq:1$ restriction $F|_{\Lambda_-}:\Lambda_- \to \Lambda_-$ admits a polynomial-like restriction hybrid conjugate to $g\circ f$, and the $pq:1$ restriction $F^{-1}|_{\Lambda_+}:\Lambda_+ \to \Lambda_+$ is hybrid conjugate to $f\circ g$;

        \item $\Omega$ is an open set and $\faktor{\Omega}{F}$ is a Riemann surface biholomorphic to $\faktor{\D}{\Gamma}$.
    \end{itemize}
    
\end{deft}

As in the proof of Theorem \ref{main.parabolic_mating}, we must find quasiconformal maps $\varphi: A \to U\setminus \overline{U'}$ and $\psi: B \to V\setminus \overline{V'}$ such that the following diagrams are commutative:
\begin{equation}\label{eq.commutative_diagrams_2} \begin{tikzcd}
\partial_i A \arrow[r, "\{\rho^m\}_m"] \arrow[d, "\varphi"'] & \partial_i\hat{B} \arrow[d, "\psi"] \arrow[r, "\{\sigma^n\}_n"] & \partial_o A \arrow[d, "\varphi"] & \partial_i B \arrow[d, "\psi"'] \arrow[r, "\{\sigma^n\}_n"] & \partial_i \hat{A} \arrow[r, "\{\rho^m\}_m"] \arrow[d, "\varphi"] & \partial_o B \arrow[d, "\psi"] \\
\partial U' \arrow[r, "f"']                                  & \partial\hat{V} \arrow[r, "g"']                                       & \partial U                  & \partial V' \arrow[r, "g"']                                       & \partial \hat{U} \arrow[r, "f"']                                  & \partial V            
\end{tikzcd} \end{equation}
where $\partial_o$ denotes the outer boundary of an annulus, while $\partial_i$ denotes the inner boundary. This time, however, we only need the classical tools of the theory of quasiconformal maps.

\begin{lema}\label{lema.hyperbolic_interpolations}

    For any choice of diffeomorphisms $\varphi: \partial_o A \to \partial U$ and $\psi: \partial_o B \to \partial V$, we can find quasiconformal extensions $\varphi: A\to U\setminus \overline{U'}$ and $\psi: B\to V\setminus \overline{V'}$ such that the diagrams (\ref{eq.commutative_diagrams_2}) are commutative.
    
\end{lema}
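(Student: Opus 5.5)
Given $\varphi$ on $\partial_o A$ and $\psi$ on $\partial_o B$, the plan mirrors Lemma~\ref{lema.diagram_lift}: first extend the boundary maps by lifting through the diagrams (\ref{eq.commutative_diagrams_2}), then interpolate quasiconformally on each subannulus. Since there are no cusps here, the Warschawski asymptotics of Lemma~\ref{lema.warschawski} are not needed. Everything takes place on the annuli $A,B$ obtained from $A_\varepsilon,B_\varepsilon$ by the identifications by $\chi$ and the orientation-reversing elements $g_J$. In these annuli all boundary curves and intermediary curves are closed quasicircles, namely finite unions of geodesic arcs glued at identified endpoints.

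\textbf{Step 1: lifting the boundary maps.} On $\partial_i\hat B$ the maps $\{\sigma^n\}_n$ give a degree $q$ covering onto $\partial_o A$, and $g\colon\partial\hat V\to\partial U$ is a degree $q$ covering of Jordan curves. The latter holds because $\hat V=g^{-1}(U)$ is a simply connected polynomial-like domain, so $\partial\hat V$ contains no critical points once $U$ avoids the critical values. Both are therefore connected $q$-fold covers of a circle. Choosing one base point correspondence, the diffeomorphism $\varphi\colon\partial_o A\to\partial U$ lifts uniquely to a diffeomorphism $\psi\colon\partial_i\hat B\to\partial\hat V$. In the same way, $\{\sigma^n\}_n\colon\partial_i B\to\partial_i\hat A$ is a degree $q$ cover matched with $g\colon\partial V'\to\partial\hat U$, and the two $p$-fold covers $\partial_i A\to\partial_i\hat B$ (via $\rho^m$ against $f$) and $\partial_i\hat A\to\partial_o B$ (via $\rho^m$ against $f$) are matched similarly.

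The lifting must be done in the right order. First lift $\varphi$ on $\partial_o A$ to obtain $\psi$ on $\partial_i\hat B$. Next lift $\psi$ on $\partial_o B$ to obtain $\varphi$ on $\partial_i\hat A$, and then to $\psi$ on $\partial_i B$. Finally lift $\psi|_{\partial_i\hat B}$ to obtain $\varphi$ on $\partial_i A$.

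The point requiring care is that the covering maps on the group side are well defined across the glued endpoints. This is guaranteed by the preceding lemma: the identifications are by $\chi$ composed with group words, and $\chi$ anticommutes with $\sigma_\varepsilon$ and $\rho_\varepsilon$, so the piecewise action of the $\sigma^n,\rho^m$ descends to a continuous covering of circles. At each gluing point the lift is smooth on each side, so it is piecewise smooth and in particular quasisymmetric.

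\textbf{Step 2: quasiconformal interpolation.} Each of the four annuli $A\setminus\overline{\hat A}$, $\hat A$, $B\setminus\overline{\hat B}$ and $\hat B$ is bounded by two disjoint quasicircles. The annuli are nondegenerate, since in the hyperbolic case the arcs of $\mathcal J$ separate the geodesics, and likewise on the polynomial side $U'\Subset\hat U\Subset U$ and $V'\Subset\hat V\Subset V$. A quasisymmetric homeomorphism between the boundaries of two such annuli, respecting orientation on both components, extends to a quasiconformal map of the closed annuli. To see this, uniformize both annuli to round annuli by conformal maps, which extend quasisymmetrically to the boundary since the boundaries are quasicircles. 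Then interpolate: apply Beurling--Ahlfors on collars near each boundary circle, and join the collars by a map that is linear in the logarithmic coordinate. The maps on the four annuli agree on the shared intermediary curves by construction, so they glue to quasiconformal maps $\varphi\colon A\to U\setminus\overline{U'}$ and $\psi\colon B\to V\setminus\overline{V'}$, using removability of quasicircles. The diagrams commute because they already commute on the boundaries.

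\textbf{Main obstacle.} I expect the hardest part to be Step 1's well-definedness near the arcs of $\mathcal J$ in the quotient. The concern is that the orientation-reversing identifications $g_J$ might not be compatible with the maps $\{\rho^m\}$ and $\{\sigma^n\}$, which would prevent the piecewise group action from descending to a genuine circle covering of the right degree. My first attempt at this would be to check, case by case following the three cases of the preceding lemma, that $\rho_\varepsilon$ and $\sigma_\varepsilon$ conjugate each $g_J$ to another $g_{J'}$, so that identified endpoints are carried to identified endpoints.
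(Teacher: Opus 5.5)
Your proposal is correct and takes the same route as the paper. You lift the outer-boundary diffeomorphisms through the diagrams (\ref{eq.commutative_diagrams_2}), observe that the lifts are quasisymmetric, extend quasiconformally over the four subannuli and glue. You also make explicit what the paper's three-line proof leaves implicit: the lifting order, the round-annulus interpolation, and the compatibility of the folds $g_J$ with the piecewise $\rho^m,\sigma^n$ action.

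One small slip: $g\colon\partial\hat V\to\partial U$ is a covering because $\partial U$ avoids the critical values of $g$ (they lie in $K(g\circ f)\subset U$), not because $U$ avoids them.
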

\begin{proof}

    For the same reason as in the proof of Lemma \ref{lema.diagram_lift}, we may lift any chosen diffeomorphisms $\varphi: \partial_o A \to \partial U$ and $\psi: \partial_o B \to \partial V$ to diffeomorphisms $\varphi: \partial_i A\cup \partial_i \hat{A} \to \partial U'\cup \partial \hat{U}$ and $\psi: \partial_i B\cup \partial_i \hat{B} \to \partial V'\cup \partial \hat{V}$. These maps are quasisymmetric, and therefore admit quasiconformal extensions to the annuli $\hat{A}, A\setminus \hat{A}, \hat{B}$, and $B\setminus \hat{B}$. Gluing these extensions together, we find the desired maps.
    
\end{proof}

We are now ready to prove Theorem \ref{main.hyperbolic_mating}.

\begin{proof}[Proof of Theorem \ref{main.hyperbolic_mating}]

    Take $\varphi: A\to U\setminus \overline{U'}$ and $\psi: B\to V\setminus \overline{V'}$ as in Lemma \ref{lema.hyperbolic_interpolations}. We again begin by constructing the mating at a topological level: we let $S$ be the sphere $(U\cup V)/\sim$, where the relation $\sim$ identifies a point $z \in \partial U$ with $w \in \partial V$ if $\varphi^{-1}(z) = \psi^{-1}(w) \in \partial_o A = \partial_o B$. We then define a correspondence $G$ on $S$ by setting:
    \begin{itemize}
        \item $G: U' \to U$ a $pq:1$ map given by $g\circ f$;
        \item $G: V \to V'$ a $1:pq$ correspondence given by $(f\circ g)^{-1}$;
        \item $G: U' \to \hat{V}$ a $p(q-1):(q-1)$ correspondence given by composing the $p:1$ map $f: U' \to \hat{V}$ with the $q-1:q-1$ correspondence $\Cov_0^g: \hat{V} \to \hat{V}$;
        \item $G: \hat{U} \to V'$ a $(p-1):(p-1)q$ correspondence given by composing the $p-1:p-1$ correspondence $\Cov_0^f: \hat{U} \to \hat{U}$ with the $1:q$ correspondence $g^{-1}: \hat{U} \to V'$;
        \item $G: \hat{U}\setminus \overline{U'} \to \bigcup_n \sigma^n(V)$ a $1:q$ correspondence that maps a point $z$ to $\sigma^n\rho^j(z)$, $1\leq n\leq q$, where $1\leq j\leq p$ is chosen so that $\rho^j(z) \in V$ (equivalently, $\rho^j(z) \in \Delta_\rho$);
        \item $G: \rho^m(U) \to \hat{V}\setminus \overline{U'}$, $1\leq m\leq p$, a $1:1$ correspondence that maps a point $z$ to the point $w$ such that $\rho^m\sigma^\ell(w) = z$, where $1\leq \ell\leq q$ is chosen so that $\sigma^\ell(w) \in U$ (equivalently, $\sigma^\ell(w) \in \Delta_\sigma$);
        \item $G$ acts as the group elements $\sigma_\varepsilon^n\rho_\varepsilon^m$, $1\leq n\leq q$, $1\leq m\leq p$, on the rest of $S$.
    \end{itemize}
    Our construction of $\varphi$ and $\psi$ gives us that the various pieces match on boundaries, ensuring $G$ is well-defined and continuous.

    Just as in the parabolic case, we have a decomposition of $S$ into fully $G$-invariant sets $\Lambda' := K(g\circ f)\cup K(f\circ g)$ and $\Omega' := \widehat{\C}\setminus \Lambda'$. We have that $G$ admits polynomial-like restrictions of the actions of $G|_{K(g\circ f)}$ and $G^{-1}|_{K(f\circ g)}$ that are hybrid equivalent to the polynomial-like restrictions $(g\circ f, U', U)$ and $(f\circ g, V', V)$, but now the action of $G$ on $\Omega'$ cannot be conjugate to that of $\Gamma_\varepsilon$ on $\D$ (for the elementary reason that $\Omega'$ is an annulus). Nevertheless, it remains true that $\faktor{\Omega'}{G}$ is biholomorphic to $\faktor{A_\varepsilon\cup B_\varepsilon}{G_\varepsilon} = \faktor{\D}{\Gamma_\varepsilon}$.

    Now let $\phi$ be the quasiconformal map conjugating $\Gamma$ to $\Gamma_\varepsilon$. We define a Beltrami form $\mu$ in the following way:
    \begin{itemize}
        \item $\mu = \phi^\ast\mu_0$ on $A\cup B$, which can then be seen as $\varphi^\ast\phi^\ast\mu_0$ on $U\setminus \overline{U'}$, and $\psi^\ast\phi^\ast\mu_0$ on $V\setminus\overline{V'}$;

        \item $((g\circ f)^n)^\ast\varphi^\ast\phi^\ast\mu_0$ on $z$ if $(g\circ f)^n(z) \in U\setminus \overline{U'}$;

        \item $((f\circ g)^n)^\ast\psi^\ast\phi^\ast\mu_0$ on $z$ if $(f\circ g)^n(z) \in V\setminus\overline{V'}$;

        \item $\mu_0$ on $K(g\circ f)\cup K(f\circ g)$
    \end{itemize}
    Clearly, $\mu$ is $G$-invariant, and therefore we can find a quasiconformal map $\Phi$ integrating $\mu$ such that $F = \Phi\circ G\circ \Phi^{-1}$ is an algebraic correspondence. We see that $F$ is an orbit mating, with $\Lambda_- = \Phi(K(g\circ f))$, $\Lambda_+ = \Phi(K(f\circ g))$, $\Lambda = \Lambda_-\cup \Lambda_+$, and $\Omega = \widehat{\C}\setminus \Lambda$. Indeed, since $\Phi|_{\Lambda}$ has dilatation $0$, the polynomial-like restrictions of $F$ induced from $G$ around $\Lambda_-$ and $\Lambda_+$ are hybrid equivalent to the polynomial-like restrictions of $G$ around $\Lambda_-'$ and $\Lambda_+'$, which implies they are hybrid equivalent to the polynomial-like restrictions $(g\circ f, U', U)$ and $(f\circ g, V', V)$. Also, $\mu = \Phi^\ast\mu_0$ means that $\faktor{\Omega}{F}$ is quasiconformally conjugate to $\faktor{\Omega'}{G}$ via a map induced by $\Phi$. But $\Phi^\ast\mu_0$ on $A\cup B$ coincides with $\phi^\ast\mu_0$, which means that $\faktor{\Omega}{F}$ is biholomorphic to $\faktor{\D}{\Gamma}$.
    
\end{proof}

The proof of Theorem \ref{main.hyperbolic_cov} is very similar to that of Theorem \ref{main.parabolic_cov}, but now we rely on the way the mating has been constructed, since there is no map conjugating the action of the correspondence $F$ on $\Omega$ to that of the group $\Gamma$ on $\D$.

\begin{proof}[Proof of Theorem \ref{main.hyperbolic_cov}]

    Given the orbit mating $F$ between the pair $(f, g)$ and the group $\Gamma$ constructed in the proof of Theorem \ref{main.hyperbolic_mating}, we notice that, by construction, there exists a closed curve $\gamma$ and two polynomial-like restrictions $(F|_{U'}, U', U)$ and $(F^{-1}|_{V'}, V', V)$ with limit sets $\Lambda_-$ and $\Lambda_+$, respectively, satisfying $\partial U = \partial V = \gamma$, such that there are two quasiconformal maps $\phi_-: A\to  U\setminus \overline{U'}$ and $\phi_+: B\to V\setminus \overline{V'}$ conjugating the actions on the boundaries. Now let $T := \phi_-(A_\varepsilon\cap \Delta_{\rho_\varepsilon})$. Notice that the map $f$ induces a map $\tilde{f}$ from the union of $U\setminus T$ with $\bigcup_{m=1}^q(\phi_-(\rho_\varepsilon^m(A_\varepsilon\cap \Delta_{\rho_\varepsilon})))$, to $V$. We set $\Sigma_1 := (T\cup V)/\sim$, where $\sim$ identifies two edges of $T$ via $\phi_-\circ \rho_\varepsilon\circ \phi_-^{-1}$ and observe that $\Sigma_1$ is a sphere. We define a branched-covering map $P: \widehat{\C} \to \Sigma_1$ by setting:
    \[ P(z) = \begin{cases} z & \text{if } z \in T\cup V \\ \phi_-\circ \rho_\varepsilon^m\circ \phi_-^{-1}(z) & \text{if } z \in \phi_-(\rho^{-m}(A_\varepsilon\cap \Delta_{\rho_\varepsilon})) \\ \tilde{f}(z) & \text{otherwise.} \end{cases} \]
    Notice that $P$ is holomorphic on the sets $\phi_-(\rho^{-m}(A_\varepsilon\cap \Delta_{\rho_\varepsilon}))$ since $\phi_-$ conjugates the action of $\rho_\varepsilon$ to a holomorphic map on $\widehat{\C}$. It is clear that near the image under $\phi_-$ of the fixed point of $\rho_\varepsilon$, the map $P$ behaves like $z \to z^{p+1}$, and that $\deg P = p + 1$, so that with a suitable choice of coordinate on $\Sigma_1$, it becomes a rational map conjugate to a polynomial of degree $p+1$. Similarly, we can set $S := \phi_+(B_\varepsilon\cap \Delta_{\sigma_\varepsilon})$, denote by $\tilde{g}$ the map $g$ induces from the union of $V\setminus S$ and $\bigcup_n(\phi_+(\sigma_\varepsilon^n(B_\varepsilon\cap \Delta_{\sigma_\varepsilon})))$ to $U$, construct a sphere $\Sigma_2 := (S\cup U)/\sim$, and define $Q: \widehat{C} \to \Sigma_2$ by
    \[ Q(z) = \begin{cases} z & \text{if } z \in S\cup U \\ \phi_+\circ \sigma^n\circ \phi_+^{-1}(z) & \text{if } z \in \phi_+(\sigma_\varepsilon^{-n}(B_\varepsilon\cap \Delta_{\sigma_\varepsilon})) \\ \tilde{g}(z) & \text{otherwise}. \end{cases} \]
    Again, $Q$ is conjugate to a polynomial, its `infinity' now at the image under $\phi_+$ of the fixed point of $\sigma_\varepsilon$. Finally, it is clear that $\Cov_0^Q\circ \Cov_0^P$ coincides with $F$ on $T\cup S$, and therefore, since both are holomorphic correspondences, they coincide everywhere.
    
\end{proof}

\bibliographystyle{plain}
\bibliography{biblio}

\end{document}